\journal{}
\newtheorem{assumption}{Assumption}
\newtheorem{theorem}{Theorem}[section]
\newtheorem{lemma}[theorem]{Lemma}
\newtheorem{corollary}[theorem]{Corollary}
\newtheorem{remark}{Remark}[section]
\newcommand{\triple}[1]{{\left\vert\kern-0.12ex\left\vert\kern-0.12ex\left\vert #1 
    \right\vert\kern-0.12ex\right\vert\kern-0.12ex\right\vert}}
\DeclareMathOperator{\Div}{div}
\newcommand{\Th}{\mathcal{T}_h}
\newcommand{\RR}{\mathbb{R}}
\newcommand{\ddn}[1]{\frac{\partial#1}{\partial n}}
\newcommand{\red}[1]{#1}
\begin{document}

\begin{frontmatter}

%% Title, authors and addresses

%% use the tnoteref command within \title for footnotes;
%% use the tnotetext command for theassociated footnote;
%% use the fnref command within \author or \address for footnotes;
%% use the fntext command for theassociated footnote;
%% use the corref command within \author for corresponding author footnotes;
%% use the cortext command for theassociated footnote;
%% use the ead command for the email address,
%% and the form \ead[url] for the home page:
%% \title{Title\tnoteref{label1}}
%% \tnotetext[label1]{}
%% \author{Name\corref{cor1}\fnref{label2}}
%% \ead{email address}
%% \ead[url]{home page}
%% \fntext[label2]{}
%% \cortext[cor1]{}
%% \address{Address\fnref{label3}}
%% \fntext[label3]{}

\title{\red{CutFEM without cutting the mesh cells: a new way to impose Dirichlet and Neumann boundary conditions on unfitted meshes}}

%% use optional labels to link authors explicitly to addresses:
%% \author[label1,label2]{}
%% \address[label1]{}
%% \address[label2]{}

\author{Alexei Lozinski}

\address{Laboratoire de Math\'ematiques de Besan\c{c}on - UMR CNRS 6623, \\ Univ. 
Bourgogne Franche Comt\'e, 16 route de Gray, 25030 Besan\c{c}on Cedex, France.}

\begin{abstract}
We present a method of CutFEM type for the Poisson 
problem with either Dirichlet or Neumann boundary conditions. The computational mesh is obtained from a background (typically 
uniform Cartesian) mesh by retaining only the elements intersecting the domain 
where the problem is posed. The resulting mesh does not thus fit the boundary 
of the problem domain. Several finite element methods (XFEM, CutFEM) adapted to 
such meshes have been recently proposed. The originality of the present article 
consists in avoiding integration over the elements cut by the boundary of the 
problem domain, while preserving the optimal convergence rates, as confirmed by 
both the theoretical estimates and the numerical results. 
\end{abstract}

\begin{keyword}
Fictitious domain, finite elements.
%% keywords here, in the form: keyword \sep keyword

%% PACS codes here, in the form: \PACS code \sep code

%% MSC codes here, in the form: \MSC code \sep code
%% or \MSC[2008] code \sep code (2000 is the default)

\end{keyword}

\end{frontmatter}

%% \linenumbers

%% main text
\section{Introduction.}

In this article, we propose a new approach to the numerical solution of boundary value problems for partial differential equations using finite elements on non-matching meshes, circumventing the need to generate a mesh accurately fitting the physical boundaries or interfaces. Such approaches, classically known as the fictitious domain methods, have a long history dating back to \cite{saul63} in the case of the Poisson problem with Dirichlet boundary conditions. They were later popularized, by Glowinski and co-workers, cf. \cite{glowinski94} for example, and successfully applied in the context of particular flow simulations \cite{glowinski99, glowinski07}. The basic idea of the classical fictitious domain method is to embed the physical domain $\Omega$ into a bigger simply shaped domain $\mathcal{O}$, to extend the physically meaningful solution on $\Omega$ by a fictitious solution on $\mathcal{O}\setminus\Omega$ using the same governing equations as on $\Omega$, and to impose the boundary conditions on $\partial\Omega$ by Lagrange multipliers. At the numerical level, this means that one can work with simple meshes on $\mathcal{O}$, but one also needs a mesh on the physical boundary $\partial\Omega$ for the Lagrange multiplier, which should be coarser than the first one in order to satisfy the inf-sup condition \cite{girault95}. One is thus not completely free from meshing problems. Another unfortunate feature of such methods is their poor accuracy: one cannot expect the convergence order to be better than $1/2$ with the respect to the mesh size. Closely related penalty methods are well suited for both Dirichlet and Neumann boundary conditions \cite{glowinski92,maury09}, may be simpler to implement in practice than the fictitious domain methods with Lagrange multipliers, but share with them the poor convergence properties.  

More recently, several optimally convergent finite element methods on non-matching meshes were
proposed following the XFEM  or CutFEM  paradigms. 
XFEM (extended finite element method) was initially introduced in \cite{moes99} for applications in the structural mechanics on cracked domains (Neumann boundary conditions on the crack). Its ability to impose Dirichlet boundary conditions was demonstrated in \cite{moes06,sukumar2001} and a properly stabilized version with proved optimal convergence was proposed in  \cite{HaslingerRenard}. The CutFEM methods \cite{burman15} were first introduced in a series of papers by Burman and Hansbo {\cite{burman1,burman2,burman3}. The common feature of all these methods is that the simple background mesh is only used to define the finite element space (only the mesh elements having non empty intersection with $\Omega$ are kept in the computational mesh), but the solution is no longer extended from the physical domain $\Omega$ to a larger fictitious domain. The integrals over $\Omega$ are thus maintained in the finite element formulation. The boundary conditions are imposed either through Lagrange multipliers living on the same mesh as the primary solution {\cite{HaslingerRenard,burman1}} or by the Nitsche method {\cite{burman2,burman3}} stabilized by the ghost penalty \cite{burmanghost}. The optimal convergence, i.e. the error estimates of the same order as those for finite element methods on a comparable matching mesh, are established for all the methods above.

As already mentioned, XFEM/CutFEM methods contain the integrals over $\Omega$ in their formulations. This can be cumbersome in practice. Citing \cite{burman1} ``the only remaining difficulty of implementation is the actual integration on the boundary and on parts of elements cut by the boundary. This difficulty however is expected to arise in any optimal order fictitious domain method.'' We attempt in the present article to prove that this statement is not entirely true. We propose in fact an optimal order fictitious domain method that does not involve the integrals on $\Omega$ and thus does not require to perform the integration on parts of elements cut by the boundary.  Our method can be regarded as a mix between the classical fictitious domain approach and CutFEM. As in CutFEM, we use the the computational mesh constructed by keeping only the elements from the background mesh on the embedding domain $\mathcal{O}$ having non empty intersection with $\Omega$. On the other hand, we extend the solution from $\Omega$ to the computational domain, which is now only slightly larger than $\Omega$. In fact, the extension is done on a narrow band of width of order of the mesh size, contrary to the extension to entire $\mathcal{O}$ as in the classical fictitious domain approach.\footnote{The idea of constructing numerically a smooth extension to the whole $\mathcal{O}$ is explored in  {\cite{fabreges}} resulting in an optimally convergent method. The price to pay is the necessity to solve an optimization problem by an iterative process, which can be expensive in practice.} This minimizes the effect of choosing a ``wrong'' extension and enables us, with the help of a proper stabilization mainly borrowed from CutFEM, to preserve the optimal convergence without integration on the cut mesh elements. Unfortunately, the integration on the actual boundary should still be performed, so that some non trivial quadrature techniques are still needed. We believe however that an integration over the surface (resp. the curve) cut by the mesh is simpler to implement than that on the cut mesh elements in 3D (resp. 2D).

Let us now give a first sketch of the methods that we propose in this article. We restrict ourselves to the simple model problem: the Poisson equation with either Dirichlet boundary conditions
\begin{equation}
  - \Delta u = f \text{ in } \Omega, \qquad u = g \text{ on } \Gamma
  \label{Dir:P}
\end{equation}
or Neumann ones
\begin{equation}
  - \Delta u = f \text{ in } \Omega, \qquad \frac{\partial u}{\partial n}= g
  \text{ on } \Gamma \label{Neu:P}
\end{equation}
Here $\Omega \subset \mathbb{R}^d$, $d=2,3$ is a domain with smooth boundary $\Gamma=\partial\Omega$,
$f$ and $g$ are given functions on $\Omega$ and $\Gamma$ respectively (satisfying the compatibility condition $\int_\Omega f+\int_\Gamma g=0$ in the Neumann case). 

We start by embedding $\Omega$ into a simply shaped domain
$\mathcal{O}$ and introduce a quasi-uniform mesh $\mathcal{T}_h^{\mathcal{O}}$
on $\mathcal{O}$ consisting of triangles/tetrahedrons of maximum diameter $h$ that can be cut by the boundary $\Gamma$ in an arbitrary manner.
The computational mesh $\Th$ is obtained from $\mathcal{T}_h^{\mathcal{O}}$ by dropping out all the mesh element lying completely outside $\Omega$:
\begin{equation}\label{ThOmegah}
  \mathcal{T}_h = \{T \in \mathcal{T}_h^{\mathcal{O}} : T \cap \Omega \neq
  \varnothing\}, \qquad \Omega_h = (\cup_{T \in \mathcal{T}_h} T)^{\circ}
\end{equation}
as illustrated in Fig.~\ref{Dir:figdom}. $\Omega_h$ is thus the domain occupied by the computational mesh, slightly larger than $\Omega$. We denote its boundary by $\Gamma_h := \partial \Omega_h$.

\begin{figure}[htp]
\centerline{
\includegraphics[scale=0.28]{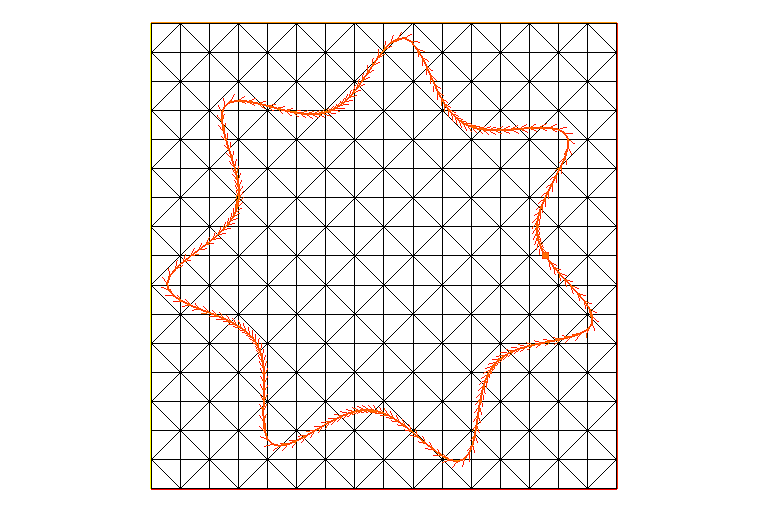}
\hspace{-15mm}
\includegraphics[scale=0.28]{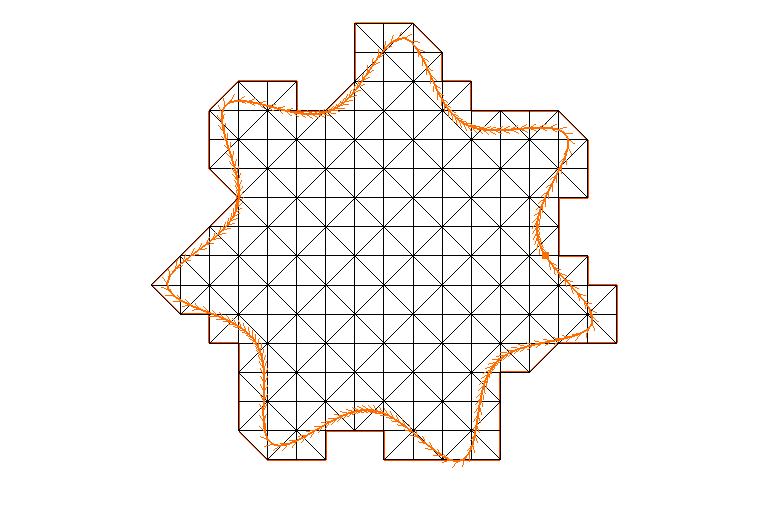}
\hspace{-15mm}
\includegraphics[scale=0.28]{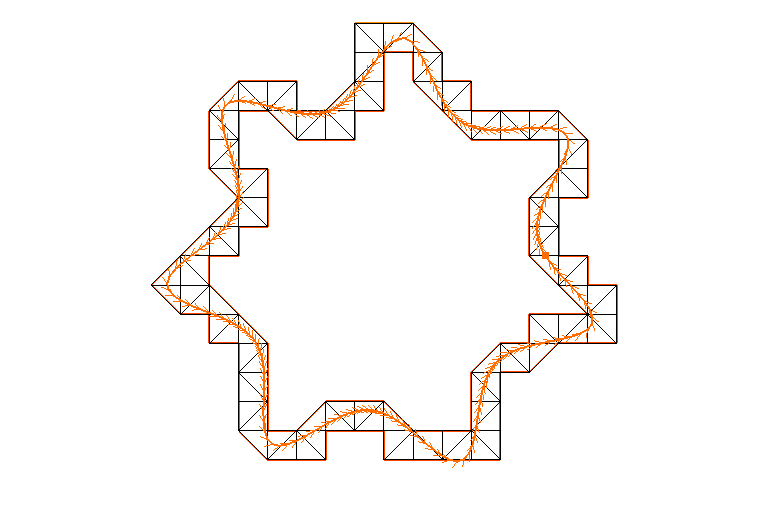}
}
\caption{Left: the ``physical'' domain $\Omega$ represented by its boundary $\Gamma$ embedded into a rectangle ${\mathcal{O}}$ with the ``background'' mesh $\mathcal{T}_{h}^{\mathcal{O}}$ on it. Center: the computational mesh $\Th$ obtained by dropping the unnecessary triangles from $\mathcal{T}_{h}^{\mathcal{O}}$. Right: the band $\Omega_h^\Gamma$ occupied by the cut elements $\Th^\Gamma$ of the mesh $\Th$.}
\label{Dir:figdom}
\end{figure}

We outline first the derivation of our method in the case of Dirichlet boundary conditions, following the preliminary version in {\cite{lozinski16}. We assume that the right-hand side $f$ is extended from $\Omega$ to $\Omega_h$ and imagine (for
the moment) that (\ref{Dir:P}) can be solved on the extended domain $\Omega_h$
while still imposing the boundary conditions on $\Gamma$:
\begin{equation}
  - \Delta u = f \text{ in } \Omega_h, \qquad u = g \text{ on } \Gamma .
  \label{Dir:Pext}
\end{equation}
We keep here the same notations $u$ and $f$ for the functions on $\Omega_h$ as
for the originals on $\Omega$. Integration by parts over $\Omega_h$ and
imposing the boundary conditions weakly on $\Gamma$ as in \red{the antisymmetric variant \cite{oden98} of the Nitsche method} \cite{nitsche} yields
\begin{equation}
  \int_{\Omega_h} \nabla u \cdot \nabla v - \int_{\Gamma_h} \frac{\partial
  u}{\partial n} v + \int_{\Gamma} u \frac{\partial v}{\partial n} +
  \frac{\gamma}{h}  \int_{\Gamma} uv = \int_{\Omega_h} fv + \int_{\Gamma} g
  \frac{\partial v}{\partial n} + \frac{\gamma}{h}  \int_{\Gamma} gv,
	\quad\forall v \in H^1 (\Omega_h) 
  \label{Dir:var}
\end{equation}
with $\gamma > 0$. Here, $n$ on $\Gamma$ (resp. $\Gamma_h$) denotes the unit normal looking outwards from $\Omega$ (resp.
$\Omega_h$). Our finite element method is then based on the weak
formulation (\ref{Dir:var}) adding to it a ghost penalty stabilization to
assure the coerciveness of the bilinear form. The method will be fully presented in Subsection \ref{IntroDir} and analyzed in Subsections \ref{CoerDir} and \ref{ConvDir}. \red{Unlike the preliminary version in \cite{lozinski16}, the optimal error estimates are here proved under the natural assumptions on the regularity of the data: $f\in L^2(\Omega_h)$, $g\in H^{3/2}(\Gamma)$  (vs. $f\in H^1(\Omega_h)$ in \cite{lozinski16}).} We stress again that formulation (\ref{Dir:var}) does not contain integrals on $\Omega$. Otherwise, the resulting method is very close to the antisymmetric Nitsche CutFEM method from {\cite{burmanghost}}.

We turn now to the case of Neumann boundary conditions (\ref{Neu:P}). We start again by
extending $f$ from $\Omega$ to $\Omega_h$ and imagine (for the moment) that
(\ref{Neu:P}) can be solved on the extended domain $\Omega_h$ while still
imposing the boundary conditions on $\Gamma$:
\begin{equation}
  - \Delta u = f \text{ in } \Omega_h, \qquad \frac{\partial u}{\partial n} = g
  \text{ on } \Gamma . \label{Neu:Pext}
\end{equation}
Integration by parts over $\Omega_h$ and
imposing the boundary conditions weakly on $\Gamma$ would yield
\begin{eqnarray*}
  \int_{\Omega_h} \nabla u \cdot \nabla v - \int_{\Gamma_h} \frac{\partial
  u}{\partial n} v + \int_{\Gamma} \frac{\partial u}{\partial n} v
	=  \int_{\Omega_h} fv + \int_{\Gamma} gv
\end{eqnarray*}
Such a formulation does not seem to lead to a reasonable finite element
method. It is difficult to imagine a stabilization that would make the bilinear form on the left-hand side above coercive; \red{unlike the Dirichlet case, one cannot rely on the smallness of $u$ near $\Gamma$ to control the non-positive terms in the bilinear form. Note that a Nitsche method for the Neumann boundary conditions is proposed in \cite{juntunen09} in the mesh conforming case. However, the terms added there to the natural variational formulation tend to weaken its coercivenes, rather than to enhance it.}

Fortunately, we are able to devise an optimally convergent method by introducing a reconstruction $y = - \nabla u$ of the gradient of $u$ on the band of cut mesh elements $\Omega_h^{\Gamma}$, cf. Fig.~\ref{Dir:figdom} on the right, and by replacing the weak formulation above with the following
one
\begin{align}
  \int_{\Omega_h} \nabla u \cdot \nabla v  &+ \int_{\Gamma_h} y \cdot nv -
  \int_{\Gamma} y \cdot nv  + \gamma_{1} \int_{\Omega_h^{\Gamma}} (y + \nabla u) \cdot (z + \nabla v)
		&& \label{Neu:var} \\
   &= \int_{\Omega_h} fv + \int_{\Gamma} gv,
	&&\forall v \in H^1 (\Omega_h), z \in L^2 (\Omega_h^{\Gamma})^d
	\notag
\end{align}
with $\gamma_{1} > 0$. The terms multiplied by $\gamma_{1}$ serve to impose $y = - \nabla u$ on
$\Omega_h^{\Gamma}$. Our finite element method (\ref{Neu:Ph}), based on the weak formulation (\ref{Neu:var}) with the addition of grad-div and ghost stabilizations, will be fully presented in Subsection \ref{IntroRob} and analyzed in Subsections \ref{CoerRob} and \ref{ConvRob}, under the assumption of some extra regularity on the data: $f\in H^1(\Omega_h)$, $g\in H^{3/2}(\Gamma)$ so that $u\in H^3(\Omega)$. It involves the additional vector-valued unknown $y_h$, which results in some extra cost in practice, as compared with a simpler method for Dirichlet boundary conditions. Fortunately, this extra cost is negligible as $h\to 0$ since $y_h$ lives only on the mesh elements cut by $\Gamma$ which  constitute smaller and smaller proportion of the total number of elements as the meshes are refined. 

Our finite element methods for both Dirichlet and Neumann cases are presented in more detail in the
next section. Their well-posedness and the optimal error estimates are
proved in Section 3. We restrict ourselves here to $P_1$ continuous finite elements on
a triangular/tetrahedral mesh. An extension to higher-order elements $P_k$ \red{and a possible adaptation to Robin boundary conditions} are sketched in Section 4. We present some numerical experiments in Section 5, restricting ourselves to the $P_1$ elements. Some final conclusions are provided in section 6.

\section{Presentation of the methods}

\subsection{Dirichlet boundary conditions}\label{IntroDir}

We present first our discretization of Problem (\ref{Dir:P}). Recall that $\Th$ is a quasi-uniform mesh obtained from a larger background mesh retaining only the elements lying inside $\Omega$ or cut by $\Gamma$, cf. (\ref{ThOmegah}) and Fig.~\ref{Dir:figdom}, and $\Omega_h$ is the corresoinding domain with boundary $\Gamma_h$.
We inspire ourselves from the variational formulation (\ref{Dir:var}), introduce the finite element space
\begin{equation}\label{Dir:Vh}
  V_h = \{v_h \in H^1 (\Omega_h) : v_h |_T \in \mathbb{P}_1 (T)
  \  \forall T \in \mathcal{T}_h \}
\end{equation}
with $\mathbb{P}_1$ denoting the set of polynomials of degree $\le 1$, and introduce the following discrete problem:\\
Find $u_h \in V_h$ such that
\begin{equation}
  a_h (u_h, v_h) = \int_{\Omega_h} fv_h + \int_{\Gamma} g \frac{\partial
  v_h}{\partial n} + \frac{\gamma}{h}  \int_{\Gamma} gv_h \quad \forall v_h
  \in V_h \label{Dir:Ph}
\end{equation}
where
\begin{equation}\label{Dir:ah}
  a_h (u, v)  = \int_{\Omega_h} \nabla u \cdot \nabla v - \int_{\Gamma_h}
  \frac{\partial u}{\partial n} v + \int_{\Gamma} u \frac{\partial v}{\partial
  n} + \frac{\gamma}{h}  \int_{\Gamma} uv + \sigma h \sum_{E \in
  \mathcal{F}_{\Gamma}} \int_E \left[ \frac{\partial u}{\partial n} \right]
  \left[ \frac{\partial v}{\partial n} \right]
\end{equation}
and $\gamma, \sigma$ are some positive numbers properly chosen in a manner independent of $h$. 
The last term in (\ref{Dir:ah}) is the ghost penalty \cite{burmanghost}. It is crucial to assure the coerciveness of $a_h$.
The notations here are as follows: $[\cdot]$ stands for the jump over an internal facet of mesh $\Th$ and
\begin{eqnarray*}
  \mathcal{F}_{\Gamma} = \{E \text{ (an internal facet of } \mathcal{T}_h)
  \text{ such that } \exists T \in \mathcal{T}_h : T \cap \Gamma \neq
  \varnothing \text{ and } E \in \partial T\}
\end{eqnarray*}
The ghost penalty is thus a properly scaled sum of the jumps of the normal derivatives over all the internal facets of the mesh either cut by $\Gamma$ themselves or owned by a mesh element cut by $\Gamma$. 

\subsection{Neumann boundary conditions}\label{IntroRob}

We turn now to Problem (\ref{Neu:P}).
We inspire ourselves with the variational formulation (\ref{Neu:var}) and introduce a subspace of  $V_h$ (continuous $P_1$ finite elements on mesh $\Th$ as defined above) appropriate to the treatment of Neumann boundary conditions
\begin{equation}\label{Vhtild}
\red{\tilde{V}_h=\left\{v_h\in V_h : \int_{\Omega_h}=0\right\}}
\end{equation}
and an auxiliary finite element space
\[
  Z_h = \{z_h \in H^1 (\Omega^{\Gamma}_h)^d : z_h |_T \in \mathbb{P}_1 (T)^d
  \  \forall T \in \mathcal{T}_h^{\Gamma} \}
\]
Here $\mathcal{T}_h^{\Gamma}$ represents the cut elements of the mesh and $\Omega_h^{\Gamma}$ the corresponding subdomain of $\Omega_h$:
\begin{equation}
\mathcal{T}_h^{\Gamma} = \{T \in \mathcal{T}_h : T  \cap \Gamma = \varnothing\}
\text{ and }
\Omega_h^{\Gamma} = (\cup_{T \in\mathcal{T}_h^{\Gamma}} T)^{\circ}
\label{ThGamma}
\end{equation}
cf. Fig.~\ref{Dir:figdom}, right. We shall also denote by $\Gamma_h^i$ the internal boundary of
$\Omega_h^{\Gamma}$, i.e. the ensemble of the facets  separating $\Omega_h^{\Gamma}$ from the
mesh elements inside $\Omega$, so that $\partial\Omega_h^{\Gamma} = \Gamma_h \cup \Gamma_h^i$.

Our finite element problem is: Find $u_h \in \tilde{V}_h$, $y_h \in Z_h$ such that
\begin{equation}
  \label{Neu:Ph} a^N_h (u_h, y_h ; v_h, z_h) = \int_{\Omega_h} fv_h +
  \int_{\Gamma} gv_h + \gamma_{div} \int_{\Omega_h^{\Gamma}} f \Div z_h \quad
  \forall (v_h, z_h) \in \tilde{V}_h \times Z_h
\end{equation}
where
\begin{align}\label{Rob:ah}
  a^N_h (u, y ; v, z) = \int_{\Omega_h} \nabla u \cdot \nabla v &+
  \int_{\Gamma_h} y \cdot nv - \int_{\Gamma} y \cdot nv + \gamma_{div}\int_{\Omega_h^{\Gamma}} \Div y \Div z \\
   & + \gamma_{1} \int_{\Omega_h^{\Gamma}} (y + \nabla u) \cdot (z + \nabla
  v) + \sigma h \int_{\Gamma_h^i} \left[\ddn{u}\right] \left[\ddn{v}\right]
	\notag
\end{align}
with some positive numbers $\gamma_{div}$, $\gamma_{1}$ and $\sigma$ properly chosen in a manner
independent of $h$. In addition to the variational formulation (\ref{Neu:var}), we have introduced here
\begin{itemize}
	\item a grad-div stabilization (the terms multiplied by $\gamma_{div}$) in the vein of \cite{franca88}, which is consistent with the governing equations since $y=-\nabla u$ and thus $\Div y = -\Delta u = f$ on $\Omega_h^{\Gamma}$;
	\item a reduced ghost stabilization (the term multiplied by $\sigma h$ in (\ref{Rob:ah}). Contrary to the Dirichlet case (\ref{Dir:ah}), it is sufficient to penalize the normal derivative jumps only on the facets separating the cut elements from the uncut ones.
\end{itemize}
\begin{remark}\label{RemNeu} 
The introduction of the subspace $\tilde{V}_h$ is helpful to ensure the well-posedness of problem \ref{Rob:ah} (otherwise, if posed on $V_h\times Z_h$, the functions $u=1$, $y=0$ would be in the kernel of $a^N_h$ reflecting the fact that the exact solution is not unique without an additional constraint). We have chosen to impose this constraint on $\Omega_h$ rather than on $\Omega$ in line with our desire to avoid the integrals on $\Omega$, difficult to implement in practice.
\end{remark}

\section{The theoretical error analysis.}\label{theory}
\subsection{Geometrical assumptions and technical lemmas.}

The theoretical analysis of the methods presented above will be done under the following assumptions on the boundary $\Gamma$ and on the subdomain $\Omega_h^\Gamma$ covered by the cut mesh elements, as defined in (\ref{ThGamma}). Both these assumptions are typically satisfied if the mesh is sufficiently refined with respect to $\Gamma$.

\begin{assumption}\label{asm1}
The subdomain $\Omega_h^\Gamma$ can be covered by open sets $\mathcal{O}_k$, $k=1,\ldots,K$ and one can introduce on every $\mathcal{O}_k$ local coordinates $\xi_1,\xi_2$ in the 2D case, $d=2$ (resp. $\xi_1,\xi_2,\xi_3$ if $d=3$) such that
\begin{itemize}
	\item $\Gamma\cap\mathcal{O}_k$ is given by $\xi_d=0$ and $\Omega\cap\mathcal{O}_k$ by $\xi_d<0$;
	\item $\Omega_h^\Gamma\cap\mathcal{O}_k$ is given by $-a(\xi_1,\xi_2)<\xi_d<b(\xi_1,\xi_2)$ with some continuous non-negative functions $a$ and $b$;
	\item $b(\xi_1,\xi_2)+a(\xi_1,\xi_2) \le C_1h$ with some $C_1>0$;
	\item all the partial derivatives $\partial\xi_i/\partial x_j$ and $\partial x_i/\partial \xi_j$ are bounded by some $C_2>0$.
\end{itemize}
Moreover, each point of $\Omega_h^\Gamma$ is covered by at most $N_{int}$ sets $\mathcal{O}_k$. 	
\end{assumption}

\begin{assumption}\label{asm2}
  The boundary $\Gamma$ can be covered by element patches $\{\Pi_k \}_{k = 1,
  \ldots, N_{\Pi}}$ having the following properties:
  \begin{itemize}
    \item Each patch $\Pi_k$ is a connected set;
    \item Each $\Pi_k$ is composed of a mesh element $T_k$ lying inside $\Omega$ and some mesh elements cut by $\Gamma$, more precisely $\Pi_k = T_k \cup \Pi_k^{\Gamma}$ where $T_k\in\mathcal{T}_h$, $T_k\subset\bar\Omega$, $\Pi_k^{\Gamma}\subset\mathcal{T}_h^{\Gamma}$, and $\Pi_k^{\Gamma}$ contains at most $M$ mesh elements;    
    \item $\mathcal{T}_h^{\Gamma} = \cup_{k = 1}^{N_{\Pi}} \Pi_k^{\Gamma}$;
    \item $\Pi_k$ and $\Pi_l$ are disjoint if $k \neq l$.
  \end{itemize}
\end{assumption}

In what follows, we suppose both assumptions above to hold true and use the notation $C$ for positive constants (which can change from one instance to another) that depend only on $C_1,C_2,N_{int},M$ from the assumptions above and on the mesh regularity. We also recall that mesh $\Th$ is supposed quasi-uniform. 

\begin{figure}[htp]
\centerline{
\includegraphics[width=0.5\textwidth]{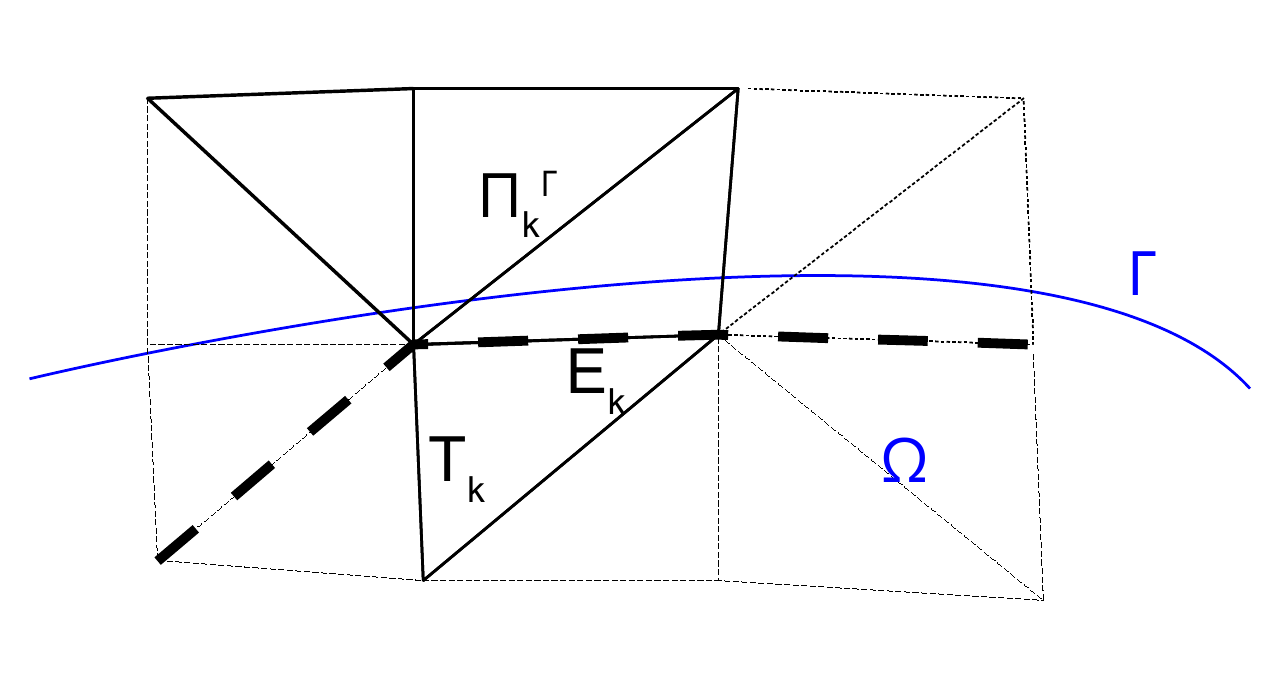}
}
\caption{Illustration of the construction of an element patch $\Pi_k$: a portion of a triangular mesh $\Th$ with the facets composing $\Pi_k$ represented by solid lines, the facets separating the cut triangles from the interior ones ($\Gamma_h^i$) represented by thick dashed lines, and the remaining facets in thin dashed lines. The patch $\Pi_k$ is composed of the interior element $T_k\subset\bar\Omega$ attached to a facet $E_k\subset\Gamma_h^i$ and several triangles cut by $\Gamma$ (the remaining part of $\Pi_k$ denoted by $\Pi_k^{\Gamma}$).}
\label{figpatch}
\end{figure}

Assumption \ref{asm1} is quite standard and essentially tells us that the boundary $\Gamma$ is smooth and not too wiggly on the length scale $h$, so that one can be sure that the band of cut mesh elements is of width $\sim h$.  Assumption \ref{asm2} is slightly more technical. Let us explain the construction of element patches evoked there, cf. Fig.~\ref{figpatch}.  Each patch $\Pi_k$ is assigned to a facet $E_k\subset\Gamma_h^i$ separating the cut elements from the interior ones. To form the patch $\Pi_k$, one takes a mesh element $T_k$ lying inside $\Omega$ and attached to $E_k$.  One then picks up several cut elements touching $E_k$ to form $\Pi_k^{\Gamma}$ and set $\Pi_k=T_k\cup \Pi_k^{\Gamma}$. If, again, the boundary $\Gamma$ is not too wiggly with respect to the mesh $\Th$, one can partition the cut elements between the patches $\Pi_k$ so that each patch contains a small  number of elements (typically from 2 to 4 in 2D, and slightly more in 3D).

We begin with two technical lemmas. The first one is in the vein of Poincar\'e inequalities taking into account the assumption that $\Omega_h^{\Gamma}$ is a strip of width $\sim h$ around  $\Gamma$.

\begin{lemma}\label{Prelim1}
  For all $v \in H^1 (\Omega_h^{\Gamma})$,
  \begin{equation}
    \|v\|_{0, \Omega_h^{\Gamma}} \le C \left( \sqrt{h} \|v\|_{0, \Gamma} +
    h|v|_{1, \Omega_h^{\Gamma}} \right) \label{Dir:eq:1}
  \end{equation}
  
\end{lemma}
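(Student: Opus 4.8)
The plan is to prove the estimate locally on each coordinate patch $\mathcal{O}_k$ from Assumption~\ref{asm1} and then sum up, using the bounded-overlap property (at most $N_{int}$ patches cover each point) to control the passage from local to global constants. The crucial geometric input is that on each $\mathcal{O}_k$ the strip $\Omega_h^\Gamma$ is described in local coordinates $\xi$ by $-a(\xi')<\xi_d<b(\xi')$ with $a+b\le C_1h$, so that the strip has thickness of order $h$ in the $\xi_d$-direction while $\Gamma$ sits at $\xi_d=0$.

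First I would work in the straightened local coordinates $\xi=(\xi',\xi_d)$, where $\xi'=(\xi_1,\xi_2)$ in 3D (just $\xi_1$ in 2D). For fixed $\xi'$ and any $\xi_d$ in the admissible interval, I would write $v$ at the interior point as $v(\xi',\xi_d) = v(\xi',0) + \int_0^{\xi_d} \partial_{\xi_d} v(\xi',s)\,ds$, i.e. express the value of $v$ inside the strip in terms of its trace on $\Gamma$ (at $\xi_d=0$) plus an integral of its derivative along the vertical segment. Squaring and using the elementary inequality $(p+q)^2\le 2p^2+2q^2$, the first term contributes $2\,|v(\xi',0)|^2$ and the second, via Cauchy--Schwarz over a segment of length at most $C_1h$, contributes $2\,|\xi_d|\int|\partial_{\xi_d}v|^2\,ds \le 2C_1h\int_{-a}^{b}|\partial_{\xi_d}v(\xi',s)|^2\,ds$. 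I would then integrate in $\xi_d$ over the interval of length $\le C_1h$: the trace term yields $\le 2C_1h\,|v(\xi',0)|^2$ and the derivative term yields $\le 2(C_1h)^2\int_{-a}^{b}|\partial_{\xi_d}v|^2$. Finally integrating over $\xi'$ gives, on the patch,
\begin{equation*}
  \int_{\Omega_h^\Gamma\cap\mathcal{O}_k} |v|^2\,d\xi \le 2C_1h\int_{\Gamma\cap\mathcal{O}_k}|v|^2\,d\xi' + 2(C_1h)^2\int_{\Omega_h^\Gamma\cap\mathcal{O}_k}|\partial_{\xi_d}v|^2\,d\xi,
\end{equation*}
which is exactly the claimed inequality (with $\|v\|_0^2$ controlled by $h\|v\|_{0,\Gamma}^2 + h^2|v|_1^2$) in the $\xi$-variables.

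The remaining work is bookkeeping on the change of variables and the summation. The fourth bullet of Assumption~\ref{asm1} bounds all Jacobian entries $\partial\xi_i/\partial x_j$ and $\partial x_i/\partial\xi_j$ by $C_2$, so the volume and surface measures, as well as the norm equivalence $|\partial_{\xi_d}v|\le C\,|\nabla_x v|$ and the reverse, transfer between $\xi$- and $x$-coordinates at the cost of constants depending only on $C_2$ and $d$; in particular $\|v\|_{0,\Omega_h^\Gamma\cap\mathcal{O}_k}$, $\|v\|_{0,\Gamma\cap\mathcal{O}_k}$ and $|v|_{1,\Omega_h^\Gamma\cap\mathcal{O}_k}$ in the two coordinate systems are comparable. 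I would then sum the local estimates over $k=1,\dots,K$; since each point of $\Omega_h^\Gamma$ lies in at most $N_{int}$ of the $\mathcal{O}_k$, the sum of the left-hand sides dominates $\|v\|_{0,\Omega_h^\Gamma}^2$ up to the factor $N_{int}$, while the sums of the right-hand terms are bounded by $N_{int}$ times the global $\|v\|_{0,\Gamma}^2$ and $|v|_{1,\Omega_h^\Gamma}^2$. Absorbing the $N_{int}$, $C_1$, $C_2$ dependence into the generic constant $C$ and taking a square root (using $\sqrt{p+q}\le\sqrt{p}+\sqrt{q}$) yields~(\ref{Dir:eq:1}).

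The main obstacle I anticipate is not the one-dimensional Poincaré-type estimate itself, which is elementary, but making the reduction to it fully rigorous for a general $H^1$ function: the pointwise fundamental-theorem-of-calculus identity and the trace evaluation $v(\xi',0)$ are justified for smooth $v$ and must be extended to all of $H^1(\Omega_h^\Gamma)$ by density, ensuring the trace on $\Gamma$ is well defined and that the resulting inequality passes to the limit. A secondary technical point is that the coordinate straightening on each $\mathcal{O}_k$ is only bi-Lipschitz under Assumption~\ref{asm1}, so I would be careful to phrase the derivative comparison in terms of the full gradient $|\nabla v|$ rather than the single partial $\partial_{\xi_d}v$ when transferring back to physical coordinates, which is harmless since $|\partial_{\xi_d}v|\le|\nabla_\xi v|$ and the latter is equivalent to $|\nabla_x v|$.
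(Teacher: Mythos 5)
Your proposal is correct and follows essentially the same route as the paper's proof: straighten the strip in the local coordinates of Assumption~\ref{asm1}, apply the one-dimensional fundamental theorem of calculus from the trace at $\xi_d=0$, use Cauchy--Schwarz over the segment of length $\le C_1h$, and sum over the patches $\mathcal{O}_k$ using the bounded overlap. The extra remarks on density and on phrasing the derivative comparison via the full gradient are sound technical points that the paper leaves implicit.
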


\begin{proof} Consider the 2D case ($d=2$). Recalling Assumption \ref{asm1}, we can pass to the local coordinates $\xi_1,\xi_2$ on every set  $\mathcal{O}_k$  assuming that $\xi_1$ varies between $0$ and $L$, and to use the bounds on $\xi_2$ and on the mapping $(x_1,x_2)\mapsto(\xi_1,\xi_2)$ to write 
  \begin{align*}
    \|v\|_{0, \Omega^{\Gamma}_h(\xi_1) \cap \mathcal{O}_k}^2 &\le C\int_0^L \int_{-a(\xi_1)}^{b(\xi_1)}
    v^2 (\xi_1, \xi_2) d \xi_2 d \xi_1 = C\int_0^L \int_{-a(\xi_1)}^{b(\xi_1)} \left( v
    (\xi_1, 0) + \int_0^{\xi_2} \frac{\partial v}{\partial \xi_2} (\xi_1, t)
    {dt} \right)^2 d \xi_2 d \xi_1 \\ &\leqslant C\int_0^L \int_{-a(\xi_1)}^{b(\xi_1)} \left(
    v^2 (\xi_1, 0) +  \left( \int_0^{\xi_2} \frac{\partial
    v}{\partial \xi_2} (\xi_1, t) {dt} \right)^2 \right) d \xi_2 d \xi_1 \\
    & \leqslant C\int_0^L \left( hv^2 (\xi_1, 0) + h^2 \int_{-a(\xi_1)}^{b(\xi_1)}
    \left| \frac{\partial v}{\partial \xi_2} (\xi_1, t) \right|^2 {dt}
    \right)^{} d \xi_1 \\
		&\le Ch\|v\|^2_{0, \Gamma \cap \mathcal{O}_k} + 
    Ch^2 \left\| \nabla v \right\|^2_{0,\Omega^{\Gamma}_h \cap \mathcal{O}_k}
  \end{align*}
  Summing over all neighborhoods $\mathcal{O}_k$ gives (\ref{Dir:eq:1}). The proof in the 3D case is the same up to the change of notations.
\end{proof}

\begin{corollary}\label{Prelim12}
  For all $v \in H^2 (\Omega_h^{\Gamma})$,
  \begin{equation}
    \|v\|_{0, \Omega_h^\Gamma} \le C \left(
		h^{1/2}\|v\|_{0, \Gamma} + h^{3/2} \| \nabla v \|_{0, \Gamma} 
		+ h^{2} \| v \|_{2, \Omega^{\Gamma}_h }
	\right) \label{Dir:eq:12}
  \end{equation}
\end{corollary}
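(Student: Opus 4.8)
The plan is to bootstrap Lemma~\ref{Prelim1}. Estimate (\ref{Dir:eq:1}) applied directly to $v$ already produces the first term $h^{1/2}\|v\|_{0,\Gamma}$, but it leaves a first-order term $h\,|v|_{1,\Omega_h^\Gamma}$ that is too crude for the claimed bound. The key observation is that this gradient term can itself be controlled by the very same inequality, since we now assume the extra regularity $v\in H^2(\Omega_h^\Gamma)$.

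Indeed, since $v\in H^2(\Omega_h^\Gamma)$, each component $\partial v/\partial x_i$ of $\nabla v$ belongs to $H^1(\Omega_h^\Gamma)$ and hence admits a trace on $\Gamma$. First I would apply Lemma~\ref{Prelim1} to $v$ to get
\[
  \|v\|_{0,\Omega_h^\Gamma}\le C\left(h^{1/2}\|v\|_{0,\Gamma}+h\,|v|_{1,\Omega_h^\Gamma}\right).
\]
Then I would apply the same lemma to each component of $\nabla v$ and sum, using $|\nabla v|_{1,\Omega_h^\Gamma}=|v|_{2,\Omega_h^\Gamma}$, to obtain
\[
  |v|_{1,\Omega_h^\Gamma}=\|\nabla v\|_{0,\Omega_h^\Gamma}
  \le C\left(h^{1/2}\|\nabla v\|_{0,\Gamma}+h\,|v|_{2,\Omega_h^\Gamma}\right).
\]
Substituting the second inequality into the first and bounding $|v|_{2,\Omega_h^\Gamma}\le\|v\|_{2,\Omega_h^\Gamma}$ yields exactly (\ref{Dir:eq:12}).

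There is no genuine obstacle here beyond bookkeeping: the single point to verify is that Lemma~\ref{Prelim1} is legitimately applicable to $\nabla v$, which requires only that $\nabla v\in H^1(\Omega_h^\Gamma)$ with a well-defined trace on $\Gamma$, guaranteed by $v\in H^2(\Omega_h^\Gamma)$ and the trace theorem. In the 3D case the argument is identical, applying Lemma~\ref{Prelim1} to the three components of the gradient.
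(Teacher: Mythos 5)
Your proof is correct and is essentially identical to the paper's own argument: apply Lemma~\ref{Prelim1} to $\nabla v$ to bound $|v|_{1,\Omega_h^\Gamma}$ and substitute the result into (\ref{Dir:eq:1}).
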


\begin{proof} Applying Lemma \ref{Prelim1} to $\nabla v$ we get
  \begin{equation}
    |v|_{1, \Omega_h^{\Gamma}} \le C \left( \sqrt{h} \|\nabla v\|_{0, \Gamma} +
    h|v|_{2, \Omega_h^{\Gamma}} \right) 
  \end{equation}
Substituting this into (\ref{Dir:eq:1}) leads to (\ref{Dir:eq:12}). 	
\end{proof}

\begin{lemma}\label{Prelim17}
  For all $v \in H^2 (\Omega_h^{\Gamma})$,
  \begin{equation}
    \|v\|_{0, \Gamma_h} \le C \left(
		\|v\|_{0, \Gamma} + h \| \nabla v \|_{0, \Gamma} 
		+ h^{3/2} \| v \|_{2, \Omega^{\Gamma}_h }
	\right) \label{Dir:eq:17}
  \end{equation}
\end{lemma}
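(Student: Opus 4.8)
The plan is to reach $\|v\|_{0,\Gamma_h}$ from two ingredients that are already at hand: a scaled (element-wise) trace inequality that pulls the norm on $\Gamma_h$ back into the volume $\Omega_h^\Gamma$, and then Lemma \ref{Prelim1} — applied both to $v$ and, as in the proof of Corollary \ref{Prelim12}, to $\nabla v$ — to convert the resulting volume seminorms into surface norms on $\Gamma$ with the correct powers of $h$. The $H^2$-regularity of $v$ is used precisely so that $\nabla v$ admits a trace on $\Gamma$; this is where the term $h\|\nabla v\|_{0,\Gamma}$ comes from.

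First I would establish the local scaled trace inequality. Since $\Gamma_h\subset\partial\Omega_h^\Gamma$ (recall $\partial\Omega_h^\Gamma=\Gamma_h\cup\Gamma_h^i$), every face $E$ composing $\Gamma_h$ bounds a cut element $T\in\mathcal{T}_h^\Gamma$, and each such $T$ is a full, shape-regular simplex of diameter $\sim h$. The standard scaled trace inequality on $T$ gives $\|v\|_{0,E}^2\le C(h^{-1}\|v\|_{0,T}^2+h|v|_{1,T}^2)$ with $C$ depending only on shape regularity. Summing over the faces of $\Gamma_h$ and using the quasi-uniformity of $\Th$ yields
\[
  \|v\|_{0,\Gamma_h}^2\le C\left(h^{-1}\|v\|_{0,\Omega_h^\Gamma}^2+h\,|v|_{1,\Omega_h^\Gamma}^2\right).
\]

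Next I would feed in the Poincar\'e-type bounds. Lemma \ref{Prelim1} gives $\|v\|_{0,\Omega_h^\Gamma}^2\le C(h\|v\|_{0,\Gamma}^2+h^2|v|_{1,\Omega_h^\Gamma}^2)$, so the first term becomes $h^{-1}\|v\|_{0,\Omega_h^\Gamma}^2\le C(\|v\|_{0,\Gamma}^2+h|v|_{1,\Omega_h^\Gamma}^2)$ and hence
\[
  \|v\|_{0,\Gamma_h}^2\le C\left(\|v\|_{0,\Gamma}^2+h\,|v|_{1,\Omega_h^\Gamma}^2\right).
\]
Applying Lemma \ref{Prelim1} to each component of $\nabla v$ exactly as in the proof of Corollary \ref{Prelim12}, i.e. $|v|_{1,\Omega_h^\Gamma}\le C(h^{1/2}\|\nabla v\|_{0,\Gamma}+h|v|_{2,\Omega_h^\Gamma})$, turns the last term into $h|v|_{1,\Omega_h^\Gamma}^2\le C(h^2\|\nabla v\|_{0,\Gamma}^2+h^3|v|_{2,\Omega_h^\Gamma}^2)$. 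Substituting and taking square roots (with $|v|_{2,\Omega_h^\Gamma}\le\|v\|_{2,\Omega_h^\Gamma}$) gives (\ref{Dir:eq:17}); the 3D case is identical up to notation.

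The only delicate point is the uniform-in-$h$ validity of the scaled trace inequality, which rests on the cut elements being genuine shape-regular mesh cells of size $\sim h$ together with the quasi-uniformity of $\Th$. This is why I prefer the element-wise route over reproducing the local-coordinate computation of Lemma \ref{Prelim1} directly on $\Gamma_h$: in the latter one would additionally have to control the slope of the graph $\xi_d=b$ describing $\Gamma_h$ when bounding its surface measure, whereas the trace inequality sidesteps this issue entirely.
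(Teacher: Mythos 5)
Your proof is correct, but it takes a genuinely different route from the paper's. The paper proves Lemma \ref{Prelim17} by working directly in the local coordinates of Assumption \ref{asm1}: it writes $v(\xi_1,b(\xi_1))$ on $\Gamma_h$ as a second-order Taylor expansion with integral remainder around the point $(\xi_1,0)\in\Gamma$, which produces the three terms $\|v\|_{0,\Gamma}$, $h\|\nabla v\|_{0,\Gamma}$ and $h^{3/2}\|v\|_{2,\Omega_h^\Gamma}$ in one stroke. You instead assemble the estimate from two modular ingredients: the element-wise scaled trace inequality on the (uncut, shape-regular) elements carrying the facets of $\Gamma_h$, giving $\|v\|_{0,\Gamma_h}^2\le C(h^{-1}\|v\|_{0,\Omega_h^\Gamma}^2+h|v|_{1,\Omega_h^\Gamma}^2)$, and then Lemma \ref{Prelim1} applied to $v$ and to $\nabla v$; the powers of $h$ work out exactly to (\ref{Dir:eq:17}). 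This is in fact the same combination the paper itself uses later for (\ref{Dir:prop3}) in Lemma \ref{Prelim2} and for Corollary \ref{Prelim12}, so your argument is arguably more uniform with the rest of the section, and — as you rightly note — it sidesteps the need to bound the surface measure of $\Gamma_h$ in terms of $d\xi_1$ (i.e.\ to control the slope of the graph $\xi_d=b$), a point the paper's first inequality $\|v\|_{0,\Gamma_h\cap\mathcal{O}_k}^2\le C\int_0^L v^2(\xi_1,b(\xi_1))\,d\xi_1$ glosses over while Assumption \ref{asm1} only states that $b$ is continuous. What the paper's Taylor-expansion route buys in exchange is self-containedness within the local-coordinate framework and a proof that visibly parallels Lemmas \ref{Prelim1} and \ref{Prelim12}; both routes yield the same constants' dependence on $C_1,C_2,N_{int}$ and the mesh regularity.
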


\begin{proof} As in the proof of Lemma \ref{Prelim1}, we only consider the 2D case. Using again Assumption \ref{asm1}, we can represent $v|_{\Gamma_h}$ inside every $\mathcal{O}_k$  by a Taylor expansion with the integral remainder
\begin{align*} \|v\|_{0,^{} \Gamma_h \cap \mathcal{O}_k}^2 &\leqslant C \int_0^L v^2
   (\xi_1, b (\xi_1)) d \xi_1 = C \int_0^L \left( v (\xi_1, 0) + b (\xi_1)
   \frac{\partial v}{\partial \xi_2} (\xi_1, 0) + \int_0^{b (\xi_1)}
   \frac{\partial^2 v}{\partial \xi^2_2} (\xi_1, t) (b (\xi_1) - t) {dt}
   \right)^2 d \xi_1 \\
	&\leqslant C \int_0^L  \left( v^2 (\xi_1, 0) +   h^2 \frac{\partial v}{\partial \xi_2} (\xi_1, 0)^2 
	+ h^3 \left|
   \int_0^{b(\xi_1)} \frac{\partial^2 v}{\partial \xi^2_2} (\xi_1, t)^2 {dt}
   \right| \right) d \xi_1 \\
	&\leqslant C (\|v\|^2_{0, \Gamma \cap \mathcal{O}_k}
   + h^2 \| \nabla v \|^2_{0, \Gamma \cap \mathcal{O}_k} + h^3 \| v \|^2_{2,
   \Omega^{\Gamma}_h \cap \mathcal{O}_k}) 
	\end{align*}
Summing this over all $\mathcal{O}_k$ gives (\ref{Dir:eq:17}).
\end{proof}
	
\begin{lemma}\label{Prelim2}
  For all $v \in H^1(\Omega_h^\Gamma)$,
  \begin{equation}
    \sum_{E \in \mathcal{F}_{\Gamma}} \|v \|_{0, E}^2 \leq C (\|v \|_{0,
    \Gamma}^2 + h|v |_{1, \Omega_h^{\Gamma}}^2) . \label{Dir:prop2}
  \end{equation}
  and
  \begin{equation}
    \|v \|_{0, \Gamma_h}^2 \leq C (\|v \|_{0, \Gamma}^2 + h|v |_{1,
    \Omega_h^{\Gamma}}^2) . \label{Dir:prop3}
  \end{equation}
\end{lemma}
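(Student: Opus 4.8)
The plan is to reduce both inequalities to the scaled trace inequality on a single simplex combined with the already established Lemma~\ref{Prelim1}. Recall the standard local trace inequality: for a simplex $T\in\Th$ of diameter $h_T\sim h$ (by quasi-uniformity), any facet $E\subset\partial T$ and any $w\in H^1(T)$, one has $\|w\|_{0,E}^2\le C(h^{-1}\|w\|_{0,T}^2+h|w|_{1,T}^2)$, obtained by scaling from the reference element. This converts boundary/facet integrals into volume integrals, at the cost of a factor $h^{-1}$ on the $L^2$ term, which is exactly the factor that Lemma~\ref{Prelim1} is tailored to absorb.

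For (\ref{Dir:prop2}), I would first observe that, by the definition of $\mathcal{F}_\Gamma$, every facet $E\in\mathcal{F}_\Gamma$ is a facet of some cut element $T_E\in\Th^\Gamma$, hence $T_E\subset\Omega_h^\Gamma$. Applying the trace inequality on $T_E$ and summing over $E\in\mathcal{F}_\Gamma$, while noting that each element carries at most $d+1$ facets so that every $T\in\Th^\Gamma$ is counted a bounded number of times, gives $\sum_{E\in\mathcal{F}_\Gamma}\|v\|_{0,E}^2\le C(h^{-1}\|v\|_{0,\Omega_h^\Gamma}^2+h|v|_{1,\Omega_h^\Gamma}^2)$, where I used that the cut elements tile $\Omega_h^\Gamma$. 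It then remains to treat the $h^{-1}\|v\|_{0,\Omega_h^\Gamma}^2$ term: squaring Lemma~\ref{Prelim1} yields $h^{-1}\|v\|_{0,\Omega_h^\Gamma}^2\le C(\|v\|_{0,\Gamma}^2+h|v|_{1,\Omega_h^\Gamma}^2)$, which, combined with the surviving $h|v|_{1,\Omega_h^\Gamma}^2$, produces exactly (\ref{Dir:prop2}).

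For (\ref{Dir:prop3}) the identical mechanism applies: the facets composing $\Gamma_h$ are outer facets of cut elements of $\Th^\Gamma$, so summing the trace inequality over them and invoking Lemma~\ref{Prelim1} as above gives the claim. Alternatively, and perhaps more transparently, one can mimic the proof of Lemma~\ref{Prelim17} in the local coordinates of Assumption~\ref{asm1}, writing the first-order expansion $v(\xi',b(\xi'))=v(\xi',0)+\int_0^{b(\xi')}\frac{\partial v}{\partial\xi_d}(\xi',t)\,dt$ (valid for $H^1$ functions along almost every vertical line after a density argument), applying Cauchy--Schwarz to the remainder together with the width bound $b+a\le C_1h$, and summing over the charts $\mathcal{O}_k$; this directly yields (\ref{Dir:prop3}) and parallels the $H^2$ computation of Lemma~\ref{Prelim17} with one fewer Taylor term.

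The delicate points here are bookkeeping rather than analysis. One must check that the overlap in the summations is bounded, which is guaranteed by quasi-uniformity and by $N_{int}$, $M$ from the assumptions, and — for the first route to (\ref{Dir:prop3}) — that each facet of $\Gamma_h$ indeed belongs to a cut element; this holds because the neighbour across such a facet has been dropped from the mesh, while $\Gamma$, being smooth, cannot coincide with an entire flat facet. I expect the only real step to be the clean passage from the element-wise $h^{-1}\|v\|_{0,T}^2$ terms to the boundary-plus-seminorm right-hand side, which is precisely what Lemma~\ref{Prelim1} delivers.
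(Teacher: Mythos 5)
Your proposal is correct and follows essentially the same route as the paper: the scaled local trace inequality $\|v\|_{0,E}^2\le C(h^{-1}\|v\|_{0,T}^2+h|v|_{1,T}^2)$ on the cut elements, summed with bounded overlap, followed by Lemma~\ref{Prelim1} to absorb the $h^{-1}\|v\|_{0,\Omega_h^\Gamma}^2$ term, with the same mechanism applied to the facets composing $\Gamma_h$ for (\ref{Dir:prop3}). The alternative chart-based argument you sketch for (\ref{Dir:prop3}) is also valid but is not needed.
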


\begin{proof}
  Let $E$ be a mesh facet belonging to a mesh element $T\in\Th$. Recall the well-known trace inequality
  \begin{equation*}
    \|v\|_{0, E}^2 \le C \left( \frac{1}{h} \|v\|_{0, T}^2 + h|v|_{1, T}^2
    \right) %\label{Dir:eq:2}
  \end{equation*}
  Summing   this over all $E \in \mathcal{F}_{\Gamma}$ gives
	$$
	  \sum_{E \in \mathcal{F}_{\Gamma}} \|v_h \|_{0, E}^2 \leq C 
		\left( \frac{1}{h} \|v\|_{0, \Omega_h^\Gamma}^2 + h|v|_{1, \Omega_h^\Gamma}^2 \right) 
	$$
	leading, in combination with  (\ref{Dir:eq:1}), to (\ref{Dir:prop2}). The proof of (\ref{Dir:prop3}) is
  similar: it suffices to take the sum over the facets composing $\Gamma_h$.
\end{proof}

\subsection{Dirichlet boundary conditions: coerciveness of $a_h$. }\label{CoerDir}

We turn to the study of method (\ref{Dir:Ph})--(\ref{Dir:ah}) for the problem with Dirichlet boundary conditions. Our first goal is to prove the coerciveness of the bilinear form $a_h$ uniformly in $h$. The proof of this result, cf. Lemma \ref{LemDir:coer}, will be based on the following

\begin{lemma}
  \label{LemDir:prop1}For any $\beta > 0$ one can choose $0 < \alpha < 1$ depending only on the mesh
  regularity and on parameter $M$ from Assumption \ref{asm2} such that 
  \begin{equation}
    |v_h |_{1, \Omega_h^{\Gamma}}^2 \leqslant \alpha |v_h |_{1, \Omega_h}^2 +
    \beta h \sum_{E \in \mathcal{F}_{\Gamma}} \left\| \left[ \frac{\partial
    v_h}{\partial n} \right] \right\|_{0, E}^2 \label{Dir:prop1}
  \end{equation}
	for all $v_h \in V_h$.
\end{lemma}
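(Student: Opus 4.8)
The plan is to exploit the patch decomposition of Assumption~\ref{asm2} together with the fact that, for a continuous $P_1$ function, the gradient is elementwise constant and jumps across an interior facet only in the normal direction. On any patch $\Pi_k=T_k\cup\Pi_k^\Gamma$ I would connect each cut element $T\in\Pi_k^\Gamma$ to the interior element $T_k$ by a path of at most $M$ facet-adjacent elements staying inside the patch (possible because $\Pi_k$ is connected and contains at most $M+1$ elements). Across each crossed facet $E$ the continuity of $v_h$ forces the tangential derivative to match, so $\nabla v_h$ changes only by $[\partial v_h/\partial n]_E\,n_E$ with $n_E$ the unit normal; telescoping along the path gives the pointwise identity $\nabla v_h|_T=\nabla v_h|_{T_k}+\sum_{E}\pm[\partial v_h/\partial n]_E\,n_E$, the sum ranging over facets of the patch, all of which lie in $\mathcal{F}_\Gamma$.

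Next I would convert this identity into an energy estimate. Since $\nabla v_h$ is constant on each element and $[\partial v_h/\partial n]_E$ is constant on each facet, I apply Young's inequality with a free parameter $\epsilon>0$ to the splitting above and bound the number of path facets by $M$, obtaining for each cut element a bound of the form $\int_T|\nabla v_h|^2\le(1+\epsilon)(|T|/|T_k|)\int_{T_k}|\nabla v_h|^2+C(1+1/\epsilon)\,h\sum_{E\in\Pi_k}\|[\partial v_h/\partial n]\|_{0,E}^2$, where I have used $\|[\partial v_h/\partial n]\|_{0,E}^2=|E|\,[\partial v_h/\partial n]_E^2\sim h^{d-1}[\partial v_h/\partial n]_E^2$ together with $|T|\sim h^d$ to match the $h$-scaling of the ghost term. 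Summing over the at most $M$ cut elements of the patch and using quasi-uniformity ($|T|/|T_k|\le C$) replaces the prefactor by a constant $C'=C'(M,\text{mesh})$.

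Then I would sum over all patches. Because the patches are disjoint and each interior element $T_k\subset\bar\Omega$ is uncut, the elements $T_k$ are distinct and contained in $\Omega_h\setminus\Omega_h^\Gamma$, so $\sum_k\int_{T_k}|\nabla v_h|^2\le|v_h|_{1,\Omega_h}^2-|v_h|_{1,\Omega_h^\Gamma}^2$; likewise each facet of $\mathcal{F}_\Gamma$ is reused in only boundedly many patches. This yields an inequality of the form $|v_h|_{1,\Omega_h^\Gamma}^2\le(1+\epsilon)C'\bigl(|v_h|_{1,\Omega_h}^2-|v_h|_{1,\Omega_h^\Gamma}^2\bigr)+C''(1+1/\epsilon)\,h\sum_{E\in\mathcal{F}_\Gamma}\|[\partial v_h/\partial n]\|_{0,E}^2$.

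Finally comes the crucial rearrangement, which I expect to be the main obstacle. The naive bound carries a prefactor $C'\ge1$, since a patch may hold up to $M$ cut elements against a single interior one, so it does \emph{not} directly produce a fraction below $1$; it is only after moving the band energy from the right to the left and dividing that one obtains the target inequality with $\alpha=\frac{(1+\epsilon)C'}{1+(1+\epsilon)C'}$ and jump coefficient $\frac{C''(1+1/\epsilon)}{1+(1+\epsilon)C'}$. Here $\alpha<1$ holds automatically. To conclude, given $\beta>0$ I would take $\epsilon$ large enough that the jump coefficient is $\le\beta$; increasing $\epsilon$ drives $\alpha\to1^{-}$ while keeping it strictly below $1$, so both requirements are met simultaneously, and the resulting $\alpha$ depends only on $\beta$, on $M$, and on the mesh regularity.
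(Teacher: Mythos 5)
Your proof is correct, but it takes a genuinely different route from the paper. The paper's argument is a compactness/normalization one: it defines $\alpha$ directly as the maximum of the (scale-invariant, homogeneous) ratio $\bigl(|v_h|_{1,\Pi_k^\Gamma}^2-\beta h\sum_{E\in\mathcal{F}_k}\|[\partial v_h/\partial n]\|_{0,E}^2\bigr)/|v_h|_{1,\Pi_k}^2$ over all admissible patch configurations and all piecewise linear $v_h$, argues that the maximum is attained in a finite-dimensional setting, and rules out $\alpha=1$ by a rigidity argument ($\nabla v_h=0$ on $T_k$ together with vanishing jumps forces $\nabla v_h\equiv 0$ on the patch). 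Your telescoping argument makes exactly this rigidity quantitative: the identity $\nabla v_h|_T=\nabla v_h|_{T_k}+\sum_E\pm[\partial v_h/\partial n]_E\,n_E$ is the explicit form of the propagation that the paper only uses qualitatively at the point $\alpha=1$. What your version buys is explicit, trackable constants and the avoidance of the paper's slightly delicate claim that the supremum over ``all possible configurations of a patch allowed by the mesh regularity'' is attained (a continuum of shapes, not just a finite-dimensional function space); what the paper's version buys is brevity and immediate generalizability to $P_k$ (as it notes in Section 4), where the gradient is no longer elementwise constant and your pointwise telescoping identity would have to be replaced by a norm-equivalence argument on each facet. Your final rearrangement is also sound: the prefactor $\alpha=\frac{(1+\epsilon)C'}{1+(1+\epsilon)C'}$ is automatically below $1$, and sending $\epsilon\to\infty$ drives the jump coefficient to $0$ while keeping $\alpha<1$, so both requirements are met; note only that (as in the paper) $\alpha$ then depends on $\beta$ as well as on $M$ and the mesh regularity, which is harmless for the coercivity argument that follows. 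One shared implicit hypothesis worth flagging: both your path construction and the paper's rigidity step require the patch to be connected \emph{through its internal facets}, which is slightly stronger than ``connected set'' as literally stated in Assumption~\ref{asm2}, though it is clearly what the construction in Fig.~\ref{figpatch} intends.
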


\begin{proof}
  Choose any $\beta > 0$, consider the decomposition of $\Omega_h^\Gamma$ in element patches $\{\Pi_k \}$ as in Assumption \ref{asm2}, cf. also Fig.~\ref{figpatch}. Introduce
  \[
    \label{Dir:alph} \alpha := \max_{\Pi_k, v_h \neq 0}  \frac{|v_h |_{1,
    \Pi_k^{\Gamma}}^2 - \beta h \sum_{E \in \mathcal{F}_k} \left\| \left[
    \frac{\partial v_h}{\partial n} \right] \right\|_{0, E}^2}{|v_h |_{1,
    \Pi_k}^2}
  \]
  where the maximum is taken over all the possible configurations of a patch
  $\Pi_k$ allowed by the mesh regularity and over all the piecewise linear
  functions on $\Pi_k$. The subset $\mathcal{F}_k \subset
  \mathcal{F}_{\Gamma}$ gathers the facets internal to $\Pi_k$. Note that the
  quantity under the $\max$ sign in (\ref{Dir:alph}) is invariant under the
  scaling transformation $x \mapsto hx$ and is homogeneous with respect to
  $v_h$. Recall also that the patch $\Pi_k$ contains at most $M$ elements. Thus, the maximum is indeed attained since it is taken over a bounded set in a finite dimensional space (one can restrict the maximization to patches of diameter of order 1, since the quantity to maximize is invariant under rescaling). 
	
	Clearly, $\alpha \le 1$. Supposing
  $\alpha = 1$ would lead to a contradiction. Indeed, if $\alpha = 1$ then we
  can take $\Pi_k$, $v_h$ yielding this maximum and suppose without loss of
  generality $|v_h |_{1, \Pi_k} = 1$. We observe then
  \begin{eqnarray*}
    |v_h |_{1, T_k}^2 + \beta h \sum_{E \in \mathcal{F}_k} \left\| \left[
    \frac{\partial v_h}{\partial n} \right] \right\|_{0, E}^2 = 0
  \end{eqnarray*}
	since $|v_h |_{1,\Pi_k}^2=|v_h |_{1,T_k}^2+|v_h |_{1,\Pi_k^\Gamma}^2$.
  This implies $\nabla v_h = 0$ on $T_k$ and $[\ddn{v_h}] = 0$ on all $E \in
  \mathcal{F}_k$, thus $\nabla v_h = 0$ on $\Pi_k$, which contradicts $|v_h
  |_{1, \Pi_k} = 1$. 
	
	This proves $\alpha < 1$. We have thus
  \begin{eqnarray*}
    |v_h |_{1, \Pi_k^{\Gamma}}^2 \le \alpha |v_h |_{1, \Pi_k}^2 + \beta h
    \sum_{E \in \mathcal{F}_k} \left\| \left[ \frac{\partial v_h}{\partial n}
    \right] \right\|_{0, E}^2
  \end{eqnarray*}
  for all $v_h\in V_h$ and all the admissible patches $\Pi_k$. Summing this over $\Pi_k$,
  $k = 1, \ldots, N_{\Pi}$ yields (\ref{Dir:prop1}).
\end{proof}

\begin{lemma}\label{LemDir:coer}
  Provided $\sigma$ is sufficiently big, there exists an $h$-independent
  constant $c > 0$ such that 
  \begin{eqnarray*}
    a (v_h, v_h) \ge c \triple{v_h}_h^2 \qquad \text{with} \qquad
    \triple{v}_h^2 = |v|_{1, \Omega_h}^2 + \frac{1}{h} \|v\|_{0, \Gamma}^2
  \end{eqnarray*}
	for all $v_h \in V_h$.
\end{lemma}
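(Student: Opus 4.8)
The plan is to test the form on the diagonal and show that, after isolating the two sign-indefinite boundary contributions, everything else is absorbed into the positive terms $|v_h|_{1,\Omega_h}^2$, $\frac1h\|v_h\|_{0,\Gamma}^2$ and the ghost penalty. Writing out $a_h(v_h,v_h)$, the only terms that are not manifestly nonnegative are
\[
  T_1 = \int_\Gamma v_h\,\ddn{v_h}, \qquad T_2 = -\int_{\Gamma_h}\ddn{v_h}\,v_h .
\]
Abbreviate $G := h\sum_{E\in\mathcal{F}_\Gamma}\|[\ddn{v_h}]\|_{0,E}^2$. The heart of the argument is a Green's formula for the field $v_h\nabla v_h$ on the sub-band $\Omega_h^+ := \Omega_h\setminus\bar\Omega$ lying between $\Gamma$ and $\Gamma_h$. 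Since $\partial\Omega_h^+=\Gamma\cup\Gamma_h$, the outward normal of $\Omega_h^+$ equals $n$ on $\Gamma_h$ and $-n$ on $\Gamma$, and since $\Delta v_h=0$ elementwise while $v_h$ is continuous across mesh facets, integrating by parts element by element yields
\[
  T_1+T_2 = -\|\nabla v_h\|_{0,\Omega_h^+}^2 + J, \qquad
  J = \sum_{E}\int_{E\cap\Omega_h^+} v_h\,[\ddn{v_h}],
\]
the sum running over the portions, lying in $\Omega_h^+$, of the interior facets bordering two cut elements; all such facets belong to $\mathcal{F}_\Gamma$. The decisive gain is the minus sign in front of $\|\nabla v_h\|_{0,\Omega_h^+}^2$.

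I would then bound the negative volume term crudely by $-\|\nabla v_h\|_{0,\Omega_h^+}^2\ge -|v_h|_{1,\Omega_h^\Gamma}^2$ and invoke Lemma \ref{LemDir:prop1}, which for a parameter $\beta>0$ still to be fixed gives $|v_h|_{1,\Omega_h^\Gamma}^2\le \alpha|v_h|_{1,\Omega_h}^2+\beta G$ with some $\alpha<1$. For the jump term I would use Cauchy--Schwarz over the facets together with the trace bound (\ref{Dir:prop2}), $\sum_{E\in\mathcal{F}_\Gamma}\|v_h\|_{0,E}^2\le C(\|v_h\|_{0,\Gamma}^2+h|v_h|_{1,\Omega_h^\Gamma}^2)$, and $\sum_E\|[\ddn{v_h}]\|_{0,E}^2=G/h$, to obtain
\[
  |J|\le C\Big(\tfrac1h\|v_h\|_{0,\Gamma}^2+|v_h|_{1,\Omega_h^\Gamma}^2\Big)^{1/2}G^{1/2}
  \le \tfrac{C\eta}{2}\Big(\tfrac1h\|v_h\|_{0,\Gamma}^2+|v_h|_{1,\Omega_h^\Gamma}^2\Big)+\tfrac{C}{2\eta}G
\]
for an arbitrary $\eta>0$, after which Lemma \ref{LemDir:prop1} is applied once more to the $|v_h|_{1,\Omega_h^\Gamma}^2$ produced here.

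Collecting the estimates yields a lower bound of the form
\[
  a_h(v_h,v_h)\ge (1-\alpha-C\eta)|v_h|_{1,\Omega_h}^2+(\gamma-C\eta)\tfrac1h\|v_h\|_{0,\Gamma}^2+(\sigma-\beta-C\eta-C/\eta)\,G .
\]
The parameters are fixed in order: first $\beta$ (hence some $\alpha<1$), then $\eta$ small enough that $1-\alpha-C\eta>0$ and $\gamma-C\eta>0$ — note this costs nothing on $\gamma$, which may be any fixed positive number — and finally $\sigma$ large enough that the coefficient of $G$ is nonnegative. Discarding the (nonnegative) ghost term then gives $a_h(v_h,v_h)\ge c\,\triple{v_h}_h^2$.

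The step I expect to be most delicate is the Green's formula on $\Omega_h^+$: one must fix the orientation of the normals correctly on $\Gamma$ versus $\Gamma_h$, and handle the fact that $\Gamma$ cuts through the elements, so that $\Omega_h^+$ is bounded partly by curved pieces of $\Gamma$ and partly by facet portions, the latter producing exactly the jump term $J$ over facets of $\mathcal{F}_\Gamma$. Everything else reduces to Cauchy--Schwarz, Young's inequality, and the already-established lemmas; the reason that only $\sigma$, and not $\gamma$, must be taken large is precisely that the two indefinite boundary terms collapse to the single good negative term $-\|\nabla v_h\|_{0,\Omega_h^+}^2$, which is controlled through the bound $\alpha<1$ of Lemma \ref{LemDir:prop1}.
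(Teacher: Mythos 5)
Your proposal is correct and follows essentially the same route as the paper: the elementwise Green's formula on the band $B_h=\Omega_h\setminus\bar\Omega$ (using $\Delta v_h=0$ on each element) to collapse the two indefinite boundary terms into $-\|\nabla v_h\|^2_{0,B_h}$ plus a facet-jump sum, followed by Lemma \ref{LemDir:prop1}, the trace bound (\ref{Dir:prop2}), and Young's inequality, with the parameters fixed in the same order. The only cosmetic difference is that you absorb the residual $|v_h|^2_{1,\Omega_h^\Gamma}$ from the jump estimate by a second application of Lemma \ref{LemDir:prop1}, whereas the paper simply bounds it by $|v_h|^2_{1,\Omega_h}$; both work.
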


\begin{proof}
  Let $v_h \in V_h$ and $B_h$ be the strip between $\Gamma$ and $\Gamma_h$, i.e. $B_h = \Omega_h \setminus \Omega$ and $\partial B_h = \Gamma \cup \Gamma_h$. Recall that we assume the normal $n$ to look outward from $\Omega_h$ (resp. $\Omega$) on  $\Gamma_h$ (resp. $\Gamma$). The outward looking normal on $\partial B_h$ coincides with $n$ on  $\Gamma_h$ and with $-n$ on  $\Gamma$. We have thus  
	\begin{align} \int_{\Gamma_h} \frac{\partial v_h}{\partial n} v_h - \int_{\Gamma} 
   \frac{\partial v_h}{\partial n} v_h &= \int_{\partial B_h}  \frac{\partial
   v_h}{\partial n} v_h = \sum_{T \in \mathcal{T}_h^{\Gamma}} \int_{\partial
   B_h \cap T}  \frac{\partial v_h}{\partial n} v_h 
	\notag \\
	 &\quad (\text{using }\partial T=(\partial B_h \cap T) \cup (B_h \cap \partial T) 
	   \text{ and } \Delta v_h=0 \text{ on } T)
		\label{LemDir:eq1}\\
	 &= \sum_{T \in
   \mathcal{T}_h^{\Gamma}} \left[ \int_T | \nabla v_h |^2 - \int_{B_h \cap
   \partial T}  \frac{\partial v_h}{\partial n} v_h \right] = \int_{B_h} |
   \nabla v_h |^2 - \sum_{F \in \mathcal{F}_{\Gamma}} \int_{F \cap B_h} v_h
   \left[ \frac{\partial v_h}{\partial n} \right] 
	  \notag
	\end{align} 
	Substituting this into the definition (\ref{Dir:ah}) of $a_h$ yields
  \begin{eqnarray*}
    a_h (v_h, v_h) = \int_{\Omega_h} | \nabla v_h |^2 - \int_{B_h} | \nabla
    v_h |^2 + \sum_{F \in \mathcal{F}_{\Gamma}} \int_{F \cap B_h} v_h \left[
    \frac{\partial v_h}{\partial n} \right] + \frac{\gamma}{h}  \int_{\Gamma}
    v_h^2 + \sigma h \sum_{E \in \mathcal{F}_{\Gamma}} \int_E \left[
    \frac{\partial v_h}{\partial n} \right]^2
  \end{eqnarray*}
  Noting that
  $B_h \subset \Omega_h^{\Gamma}$ we can use (\ref{Dir:prop1}) combined with
  the Young inequality (for any $\varepsilon > 0$) and (\ref{Dir:prop2}) to
  write  
  \begin{align*}
    a (v_h, v_h) & \geqslant (1 - \alpha) |v_h |_{1, \Omega_h}^2 + \left(
    \sigma - \beta - \frac{1}{2 \varepsilon} \right) h \sum_{E \in
    \mathcal{F}_{\Gamma}} \left\| \left[ \frac{\partial v_h}{\partial n}
    \right] \right\|_{0, E}^2 - \frac{\varepsilon}{2 h}  \sum_{E \in
    \mathcal{F}_{\Gamma}} \|v_h \|_{0, E}^2 + \frac{\gamma}{h} \|v_h \|_{0,
    \Gamma}^2\\
    & \geqslant \left( 1 - \alpha - \frac{\varepsilon C}{2} \right) |v_h
    |_{1, \Omega_h}^2 + \left( \sigma - \beta - \frac{1}{2 \varepsilon}
    \right) h \sum_{E \in \mathcal{F}_{\Gamma}} \left\| \left[ \frac{\partial
    v_h}{\partial n} \right] \right\|_{0, E}^2 + \frac{\gamma - \varepsilon C /
    2}{h} \|v_h \|_{0, \Gamma}^2
  \end{align*}
  Taking $\varepsilon$ sufficiently small and $\sigma$ sufficiently big this
  bounds $a (v_h, v_h)$ from below by $c \triple{v_h}_h^2$ as claimed.
\end{proof}

\subsection{Dirichlet boundary conditions: the error estimates. }\label{ConvDir}

We can now establish an optimal $H^1$ error estimate for the method (\ref{Dir:Ph})--(\ref{Dir:ah}) using the coerciveness of $a_h$ provided by Lemma \ref{LemDir:coer}. An $L^2$ error estimate will follow in Theorem \ref{ThDir:L2}.
\begin{theorem}
  \label{ThDir:aprio}Suppose \red{$f \in L^2 (\Omega_h)$, $g \in H^{3 / 2}  (\Gamma)$} and let $u \in H^2 (\Omega)$ be the solution to (\ref{Dir:P}),
  $u_h \in V_h$ be the solution to (\ref{Dir:Ph})--(\ref{Dir:ah}). Provided $\sigma$ is
  sufficiently big, there exists an $h$-independent constant $C > 0$ such that
  \begin{equation}
    |u - u_h |_{1, \Omega} + \frac{1}{\sqrt{h}}  \|u - u_h \|_{0, \Gamma} \le
    Ch (\|f\|_{\red{0, \Omega_h}} +\|g\|_{\red{3 / 2, \Gamma}}) . \label{Dir:aprio}
  \end{equation}
\end{theorem}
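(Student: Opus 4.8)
The plan is to run a Strang-type argument, accounting for the fact that (\ref{Dir:Ph}) is posed on $\Omega_h$ while the exact solution solves (\ref{Dir:P}) only on $\Omega$. First I would extend $u$ to a function on $\Omega_h$, still written $u$, with $\|u\|_{2,\Omega_h}\le C\|u\|_{2,\Omega}$, and invoke elliptic regularity $\|u\|_{2,\Omega}\le C(\|f\|_{0,\Omega}+\|g\|_{3/2,\Gamma})$ --- this is precisely where the assumption $g\in H^{3/2}(\Gamma)$ is needed. Since $\Omega\subset\Omega_h$, the left-hand side of (\ref{Dir:aprio}) is bounded by $\triple{u-u_h}_h$, so it suffices to estimate the latter. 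Choosing a standard quasi-interpolant $\pi_h u\in V_h$ and setting $e_h:=\pi_h u-u_h\in V_h$, I would split $\triple{u-u_h}_h\le\triple{u-\pi_h u}_h+\triple{e_h}_h$ and control $\triple{e_h}_h$ through the coerciveness of Lemma \ref{LemDir:coer}. I note that the proof of that lemma in fact bounds $a_h$ from below by the stronger norm $\|v_h\|_h^2:=\triple{v_h}_h^2+h\sum_{E\in\mathcal{F}_{\Gamma}}\|[\ddn{v_h}]\|_{0,E}^2$, which I will use in the continuity step.

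The heart of the proof is the consistency identity. Plugging the extended $u$ into $a_h$, testing against $v_h\in V_h$, and integrating by parts over $\Omega_h$ --- which is licit because $u\in H^2(\Omega_h)$, so $[\ddn{u}]=0$ (the ghost term vanishes) and $u=g$ on $\Gamma$ (matching the boundary terms of the right-hand side) --- I expect to arrive at $a_h(u,v_h)-\ell(v_h)=-\int_{B_h}(\Delta u+f)v_h$, where $\ell$ denotes the right-hand side functional of (\ref{Dir:Ph}) and $B_h=\Omega_h\setminus\Omega\subset\Omega_h^{\Gamma}$. The residual $\Delta u+f$ vanishes on $\Omega$ and survives only on the strip $B_h$, where it is merely $O(1)$, namely $\|\Delta u+f\|_{0,B_h}\le C(\|f\|_{0,\Omega_h}+\|u\|_{2,\Omega_h})$. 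The extra power of $h$ must therefore come entirely from the test function: the Poincaré-type Lemma \ref{Prelim1} gives $\|v_h\|_{0,B_h}\le\|v_h\|_{0,\Omega_h^{\Gamma}}\le C(\sqrt h\|v_h\|_{0,\Gamma}+h|v_h|_{1,\Omega_h})\le Ch\triple{v_h}_h$, whence $|a_h(u,v_h)-\ell(v_h)|\le Ch(\|f\|_{0,\Omega_h}+\|g\|_{3/2,\Gamma})\triple{v_h}_h$.

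Next I would establish continuity of $a_h$ in the form $|a_h(w,v_h)|\le C\triple{w}_{h,\ast}\|v_h\|_h$ for $w\in V_h+H^2(\Omega_h)$ and $v_h\in V_h$, with $\triple{w}_{h,\ast}^2:=|w|_{1,\Omega_h}^2+\frac1h\|w\|_{0,\Gamma}^2+h\|\ddn{w}\|_{0,\Gamma_h}^2+h\sum_{E\in\mathcal{F}_{\Gamma}}\|[\ddn{w}]\|_{0,E}^2$. The boundary terms are dispatched using $\|v_h\|_{0,\Gamma_h}\le C\sqrt h\,\triple{v_h}_h$ from (\ref{Dir:prop3}) of Lemma \ref{Prelim2}, together with the inverse-trace bound $\|\nabla v_h\|_{0,\Gamma}\le Ch^{-1/2}|v_h|_{1,\Omega_h}$ (valid on the cut elements since $|\Gamma\cap T|\le Ch^{d-1}$ while $|T|\sim h^d$), which lets the factor $\frac1{\sqrt h}\|w\|_{0,\Gamma}$ be absorbed into $\triple{w}_{h,\ast}$. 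Routine local interpolation and trace estimates, using $[\ddn{u}]=0$ for the ghost contribution, then yield $\triple{u-\pi_h u}_{h,\ast}+\triple{u-\pi_h u}_h\le Ch|u|_{2,\Omega_h}$. Finally, decomposing $a_h(e_h,e_h)=a_h(\pi_h u-u,e_h)+(a_h(u,e_h)-\ell(e_h))$ and inserting the continuity and consistency bounds gives $c\|e_h\|_h^2\le a_h(e_h,e_h)\le Ch(\|f\|_{0,\Omega_h}+\|g\|_{3/2,\Gamma})\|e_h\|_h$, so $\triple{e_h}_h\le\|e_h\|_h\le Ch(\|f\|_{0,\Omega_h}+\|g\|_{3/2,\Gamma})$, and the triangle inequality closes (\ref{Dir:aprio}).

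The main obstacle is precisely the consistency step. Because the extended $u$ does not satisfy the equation on $B_h$, there is no Galerkin orthogonality, and a naive bound of the residual integral would only be $O(1)$. The whole method works because this variational crime is localized to a band of width $O(h)$, and Lemma \ref{Prelim1} trades that thinness for one extra power of $h$ on the test function, exactly restoring the optimal rate. It is also this mechanism that allows the argument to proceed under $f\in L^2(\Omega_h)$ alone, avoiding the $H^1(\Omega_h)$ regularity required in the preliminary version.
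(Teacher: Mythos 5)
Your proposal is correct and follows essentially the same route as the paper: extend $u$ to $\tilde u\in H^2(\Omega_h)$, observe that the only consistency defect is the residual $\int_{B_h}(\Delta\tilde u+f)v_h$ supported on the thin band $B_h=\Omega_h\setminus\Omega$, and convert its $O(1)$ size into an $O(h)$ contribution via Lemma \ref{Prelim1} applied to the test function, combining this with the coerciveness of Lemma \ref{LemDir:coer}, term-by-term continuity, and standard interpolation and trace estimates. The only cosmetic differences are that the paper uses the nodal interpolant $I_h$ and phrases the Galerkin step as a supremum over $v_h$ rather than testing with $e_h$ itself.
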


\begin{proof}
  Under the Theorem's assumptions, the solution to (\ref{Dir:P}) is indeed in
  $H^2 (\Omega)$ and it can be extended to a function $\tilde{u} \in H^2
  (\Omega_h)$ such that $\tilde{u} = u$ on $\Omega$ and $\| \tilde{u} \|_{2,
  \Omega_h} \le C (\|f\|_{0, \Omega} +\|g\|_{3 / 2, \Gamma})$, cf. \cite{adams03}. Clearly,
  $\tilde{u}$ satisfies
  \begin{eqnarray*}
    a_h (\tilde{u}, v_h) = (\tilde{f}, v_h)_{L^2 (\Omega_h)} + \int_{\Gamma} g
    \frac{\partial v_h}{\partial n} + \frac{\gamma}{h}  \int_{\Gamma} gv_h
    \quad \forall v_h \in V_h
  \end{eqnarray*}
  with $\tilde{f} := - \Delta \tilde{u}$. It entails a Galerkin orthogonality relation 
	\begin{equation}
	 a_h  (\tilde{u} - u_h, v_h) = \int_{\Omega_h}
  (\tilde{f} - f) v_h,\quad \forall v_h\in V_h
	\label{Dir:GO}
	\end{equation} 
	We have then using the standard
  nodal interpolation $I_h : C (\bar{\Omega}_h) \to V_h$ and recalling Lemma \ref{LemDir:coer}
  \begin{align*}
    \frac{1}{c} \triple{u_h - I_h  \tilde{u}}_h & \le \sup_{v_h \in V_h} 
    \frac{a_h  (u_h - I_h  \tilde{u}, v_h)}{\triple{v_h}_h} = \sup_{v_h \in
    V_h}  \frac{a_h (e_u, v_h) + (f - \tilde{f}, v_h)_{L^2
    (\Omega_h)}}{\triple{v_h}_h}\\
    & 
  \end{align*}  
  with $e_u = \tilde{u} - I_h  \tilde{u}$. Using the definition (\ref{Dir:ah}) of $a_h$, 
  we can now bound term by term  
  \begin{align*}
    a_h (e_u, v_h) &\leqslant |e_u |_{1, \Omega_h} |v_h |_{1, \Omega_h} +
    \left\| \frac{\partial e_u}{\partial n} \right\|_{0, \Gamma_h} \|v_h \|_{0,
    \Gamma_h} + \|e_u \|_{0, \Gamma} \left\| \frac{\partial v_h}{\partial n}
    \right\|_{0, \Gamma} + \frac{\gamma}{h} \|e_u \|_{0, \Gamma} \|v_h \|_{0,
    \Gamma} \\
		&\qquad
    + \sigma h \sum_{E \in \mathcal{F}_{\Gamma}} \left\| \left[ \frac{\partial
    e_u}{\partial n} \right] \right\|_{0, E} \left\| \left[ \frac{\partial
    v_h}{\partial n} \right] \right\|_{0, E}\\
    &\leqslant C\left( |e_u |_{1, \Omega_h}^2 + h \left\| \frac{\partial
    e_u}{\partial n} \right\|^2_{0, \Gamma_h} + \frac{1}{h} \|e_u \|^2_{0,
    \Gamma} + h \sum_{E \in \mathcal{F}_{\Gamma}} \left\| \left[ \frac{\partial
    e_u}{\partial n} \right] \right\|_{0, E}^2 \right)^{\frac{1}{2}}
    \triple{v_h}_h
  \end{align*}  
  We have used here Lemma \ref{Prelim2} to bound $h \sum_{E \in \mathcal{F}_{\Gamma}} \left\| \left[ \frac{\partial
  v_h}{\partial n} \right] \right\|_{0, E}^2$ and $\|v_h \|_{0, \Gamma_h}$.
  This entails thanks to the usual interpolation estimates 
  \begin{eqnarray*}
    a_h (e_u, v_h) \leqslant Ch | \tilde{u} |_{2, \Omega_h} \triple{v_h}_h
  \end{eqnarray*}
  Moreover, $f = \tilde{f}$ on $\Omega$ so that
  \begin{eqnarray*}
    (f - \tilde{f}, v_h)_{L^2 (\Omega_h)} \leqslant \|f - \tilde{f} \|_{0,
    \Omega_h\setminus\Omega} \|v_h \|_{0, \Omega_h\setminus\Omega} \le 
		\red{Ch \|f - \tilde{f} \|_{0, \Omega_h} \triple{v_h}_h}
  \end{eqnarray*}
	\red{thanks to Lemma \ref{Prelim1} (recall that $\Omega_h\setminus\Omega\subset\Omega_h^\Gamma$).} 
  We conclude
  \begin{eqnarray*}
    \triple{u_h - I_h  \tilde{u}}_h 
		\le Ch (| \tilde{u} |_{2, \Omega_h} +\|f - \tilde{f} \|_{0, \Omega_h})
		\le Ch (| \tilde{u} |_{2, \Omega_h} +\|f\|_{0, \Omega_h} + \|\Delta\tilde{u}\|_{0, \Omega_h})
		\le Ch (| \tilde{u} |_{2, \Omega_h} +\|f\|_{0, \Omega_h} )
  \end{eqnarray*}
	Combining the estimates above with the triangle inequality proves
  $\triple{u_h - \tilde{u}}_h \le Ch (\|f\|_{0, \Omega_h} +\|g\|_{3 / 2,
  \Gamma})$, as claimed.
\end{proof}

\begin{remark}
The proof above does not rely on a solution to the non-standard boundary value problem (\ref{Dir:Pext}) on $\Omega_h$. We rather use the well defined solution $u$ to problem (\ref{Dir:P}) and extend it to $\Omega_h$. 
\end{remark}

The following theorem gives an $L^2$ estimate for the error. It is
sub-optimal, although the numerical experiments reveal the optimal convergence
rate $O (h^2)$, similar to the state of the art in the study of the
non-symmetric Nitsche method.

\begin{theorem}\label{ThDir:L2}
  Under the assumptions of Theorem \ref{ThDir:aprio}, there exists an
  $h$-independent constant $C > 0$ such that
  \begin{equation}
    \|u - u_h \|_{0, \Omega} \le Ch^{3 / 2}  \left(\|f\|_{\red{0, \Omega_h}} +\|g\|_{\red{3 / 2, \Gamma}}\right) . \label{Dir:aprioL2}
  \end{equation}
\end{theorem}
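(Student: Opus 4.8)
The plan is to bound the $L^2(\Omega)$ error by an Aubin--Nitsche duality argument, exploiting the $H^1$ estimate already established in Theorem~\ref{ThDir:aprio}. The loss of only $h^{1/2}$ (rather than a full power of $h$) is the characteristic feature of the \emph{antisymmetric} Nitsche method, and I expect it to come from the asymmetry of $a_h$ together with the non-conforming geometry (integration over $\Gamma_h$ rather than $\Gamma$).

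First I would set $e=\tilde u-u_h$ where $\tilde u\in H^2(\Omega_h)$ is the extension from the proof of Theorem~\ref{ThDir:aprio}, and introduce the dual problem on the \emph{physical} domain: let $w\in H^2(\Omega)$ solve $-\Delta w=e$ in $\Omega$, $w=0$ on $\Gamma$, with the elliptic regularity bound $\|w\|_{2,\Omega}\le C\|e\|_{0,\Omega}$. I would then extend $w$ to $\tilde w\in H^2(\Omega_h)$ with $\|\tilde w\|_{2,\Omega_h}\le C\|e\|_{0,\Omega}$. The next step is to test the continuous identity satisfied by $\tilde u$ and the dual problem against each other so as to express $\|e\|_{0,\Omega}^2=(e,-\Delta w)_{0,\Omega}$ in terms of the bilinear form. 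Because $a_h$ is antisymmetric in its boundary terms, the clean choice is to write $\|e\|_{0,\Omega}^2$ via $a_h(e,\tilde w)$ up to consistency defects: one integrates $(e,-\Delta w)$ by parts over $\Omega$, reconciles it with the $\Omega_h$-based form $a_h$, and collects the mismatch terms supported on the strip $B_h=\Omega_h\setminus\Omega$ and on $\Gamma$ versus $\Gamma_h$.

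Concretely I would proceed as follows. Using Galerkin orthogonality (\ref{Dir:GO}) I can subtract $a_h(e,I_h\tilde w)$ and replace $\tilde w$ by $e_w:=\tilde w-I_h\tilde w$ in the estimate, so that
\begin{align*}
\|e\|_{0,\Omega}^2 = a_h(e,e_w) + (\text{consistency terms}) + (f-\tilde f,\,I_h\tilde w)_{0,\Omega_h}.
\end{align*}
The term $a_h(e,e_w)$ is bounded exactly as $a_h(e_u,v_h)$ was in Theorem~\ref{ThDir:aprio}, now yielding $C\,\triple{e}_h\cdot\bigl(|e_w|_{1,\Omega_h}^2+h\|\partial_n e_w\|_{0,\Gamma_h}^2+\tfrac1h\|e_w\|_{0,\Gamma}^2+h\sum_{E}\|[\partial_n e_w]\|_{0,E}^2\bigr)^{1/2}$, and the second factor is $O(h)\|e\|_{0,\Omega}$ by interpolation together with Lemma~\ref{Prelim2} applied to the interpolation error of $\tilde w$. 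Since $\triple{e}_h\le Ch(\|f\|_{0,\Omega_h}+\|g\|_{3/2,\Gamma})$ by Theorem~\ref{ThDir:aprio}, this contributes $Ch^2(\cdots)\|e\|_{0,\Omega}$. The data-mismatch term $(f-\tilde f,\,\cdot)$ is supported on $\Omega_h\setminus\Omega\subset\Omega_h^\Gamma$ and is handled by Lemma~\ref{Prelim1}, giving another $O(h)$ factor.

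The main obstacle will be the geometric consistency terms. The dual solution $w$ satisfies its boundary condition on the \emph{true} boundary $\Gamma$, whereas the form $a_h$ integrates over $\Omega_h$ and $\Gamma_h$; reconciling $(e,-\Delta w)_{0,\Omega}$ with $a_h(e,\tilde w)$ produces boundary integrals over the strip $B_h$ and traces on $\Gamma_h$ of $\tilde w$ and its derivatives. These are precisely the quantities controlled by Corollary~\ref{Prelim12} and Lemmas~\ref{Prelim17}, \ref{Prelim2}: each such term carries the trace of a function that \emph{vanishes on $\Gamma$} (namely $w$, or its interpolation/extension error), so the half-width-$h$ strip estimates convert $\|w\|_{2,\Omega}$-control into an extra power of $h^{1/2}$ rather than a full $h$. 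Tracking these defects carefully, I expect the accumulated bound to be $\|e\|_{0,\Omega}\le Ch^{3/2}(\|f\|_{0,\Omega_h}+\|g\|_{3/2,\Gamma})$, the half-power loss being unavoidable with the antisymmetric variant and matching the stated sub-optimal rate. A final triangle inequality against the interpolation error $\|\tilde u-u_h\|_{0,\Omega}$ completes the estimate, noting $\|\tilde u-I_h\tilde u\|_{0,\Omega}\le Ch^2\|\tilde u\|_{2,\Omega_h}$ is of higher order.
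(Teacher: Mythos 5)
Your plan coincides with the paper's own proof: the same dual problem posed on the physical domain $\Omega$, extension of $w$ to $\Omega_h$, Galerkin orthogonality to insert $I_h\tilde w$, the bound on the bilinear-form part as in Theorem~\ref{ThDir:aprio}, and control of the geometric consistency terms (traces on $\Gamma_h$, integrals over $B_h$) via Lemma~\ref{Prelim1}, Corollary~\ref{Prelim12} and Lemma~\ref{Prelim17} exploiting $w=0$ on $\Gamma$. The one imprecision is in the attribution of the half-power loss: the dominant $h^{3/2}$ term is the antisymmetry defect $-2\int_\Gamma(u-u_h)\frac{\partial w}{\partial n}$, which is paid for by the trace bound $\|u-u_h\|_{0,\Gamma}\le \sqrt{h}\,\triple{u-u_h}_h$ on the \emph{primal} error (since $\partial w/\partial n$ does not vanish on $\Gamma$), rather than by smallness of the dual solution's traces — but this does not change the argument or the final rate.
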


\begin{proof}
  Let us introduce $w : \Omega \to \mathbb{R}$ such that
  \begin{equation}
     - \Delta w = u - u_h  \text{ in } \Omega, \qquad w = 0
    \text{ on } \Gamma .
    \label{Dir:z}
  \end{equation}
  By elliptic regularity, $\|w\|_{2, \Omega} \leq C \|u - u_h \|_{0, \Omega}$.
  Let $\tilde{w}$ be an extension of $w$ from $\Omega$ to $\Omega_h$
  preserving the $H^2$ norm estimate. Applying Corollary \ref{Prelim12} to $\tilde{w}$ and Lemma \ref{Prelim1} to  $\nabla \tilde{w}$ yields
  $$\| \tilde{w} \|_{0, \Omega_h^{\Gamma}} \le Ch^{3/2} \|u - u_h \|_{0, \Omega}
	\quad\text{ and }\quad
  | \tilde{w} |_{1, \Omega_h^{\Gamma}} \le C \sqrt{h}  \|u - u_h \|_{0,
  \Omega}$$ 
	Similarly, applying Lemma \ref{Prelim17} to $\tilde{w}$ yields $$\| \tilde{w} \|_{0, \Gamma_h} \le Ch \|u - u_h \|_{0, \Omega}$$
  Taking $w_h = I_h  \tilde{w}$, we can summarize the bounds above together with the interpolation estimates as
  \begin{align}
    | \tilde{w} - w_h |_{1, \Omega_h}  + \sqrt{h} \left\| \frac{\partial
    (\tilde{w} - w_h)}{\partial n} \right\|_{0, \Gamma \cup \Gamma_h} &+
    \frac{1}{\sqrt{h}}  \| \tilde{w} - w_h \|_{0, \Gamma \cup \Gamma_h}
    \notag\\
     &+ \sqrt{h} | \tilde{w} |_{1, \Omega^{\Gamma}_h} + \| \tilde{w} \|_{0,
    \Gamma_h} + \frac{1}{\sqrt{h}}\|w_h \|_{0, \Omega^{\Gamma}_h} \le Ch \|u - u_h \|_{0,
    \Omega} \label{Dir:zest}
  \end{align}
  Using Galerkin orthogonality (\ref{Dir:GO}) and estimates (\ref{Dir:zest}) we arrive at (recall $B_h = \Omega_h \setminus \Omega$)
  \begin{eqnarray*}
    &  & \hspace{-15mm} \|u - u_h \|_{0, \Omega}^2 = \int_{\Omega} \nabla (u
    - u_h) \cdot \nabla w - \int_{\Gamma} (u - u_h)  \frac{\partial
    w}{\partial n}\\
    & = & a_h  (\tilde{u} - u_h, \tilde{w} - w_h) + \int_{\Gamma_h}
    \frac{\partial (\tilde{u} - u_h)}{\partial n}  \tilde{w} - 2 \int_{\Gamma}
    (u - u_h)  \frac{\partial w}{\partial n} - \int_{B_h} \nabla (\tilde{u} -
    u_h) \cdot \nabla \tilde{w} + \int_{B_h} (\tilde{f} - f) w_h\\
    & \le & C \left( \triple{\tilde{u} - u_h}_h + \left\| \frac{\partial
    (\tilde{u} - u_h)}{\partial n} \right\|_{0, \Gamma_h} + \frac{1}{h} \|u -
    u_h \|_{0, \Gamma} + \frac{1}{\sqrt{h}} | \tilde{u} - u_h |_{1,
    \Omega_h^{\Gamma}} + \sqrt{h}\| \tilde{f} - f\|_{0, \Omega_h^\Gamma} \right) h \|u - u_h \|_{0,
    \Omega}
  \end{eqnarray*}
  which gives the announced error estimate in $L^2 (\Omega)$ norm thanks to the error estimate in the triple norm $\triple{\cdot}$ already established in the proof of Theorem \ref{ThDir:aprio}. We also recall $\| \tilde{f}\|_{0, \Omega_h^\Gamma}=\| \Delta\tilde{u} \|_{0, \Omega_h}\le C\left(\|f\|_{0, \Omega_h} +\|g\|_{3/2, \Gamma}\right)$. 
\end{proof}

\subsection{Neumann boundary conditions: coerciveness of $a^N_h$.}\label{CoerRob}
We turn to the study of method (\ref{Neu:Ph})--(\ref{Rob:ah}) for the problem with Neumann boundary conditions. Our first goal is to prove the coerciveness of the bilinear form $a^N_h$ uniformly in $h$. 
To this end, we note that $a^N_h$ can be rewritten using the divergence Theorem as
\begin{align}
  \label{Neu:ah} a^N_h (u, y ; v, z) & = \int_{\Omega_h} \nabla u \cdot \nabla
  v + \int_{B_h} (v\Div y + y \cdot \nabla v) 
	+ \gamma_{div}  \int_{\Omega_h^{\Gamma}} \Div y \Div z\\
  & \quad + \gamma_{1} \int_{\Omega_h^{\Gamma}} (y + \nabla u) \cdot (z + \nabla v) +
  \sigma h \int_{\Gamma_h^i} \left[\ddn u\right] \left[\ddn v\right]
  \notag
\end{align}
provided $y$ and $v$ are of regularity $H^1$. We have denoted here again  $B_h = \Omega_h \setminus \Omega$. The following lemma will allow
us to control $\int_{B_h} y \cdot \nabla v$ while the other term $\int_{B_h} v\Div y$ can be controlled thanks to the grad-div stabilization.

\begin{lemma}
  \label{LemNeu:prop1} For any $\beta > 0$, there exist $0 < \alpha < 1$ and $\delta>0$ 
  depending only on the mesh regularity and on parameter $M$ in Assumption \ref{asm2} such that
  \begin{equation}
    \left| \int_{B_h} z_h \cdot \nabla v_h \right| \leqslant \alpha |v_h |_{1,
    \Omega_h}^2 + \delta \|z_h + \nabla v_h \|^2_{0, \Omega_h^{\Gamma}} + \beta
    h \left\|\left[\ddn{v_h}\right]\right\|_{0, \Gamma_h^i}^2 \label{Neu:prop1}
  \end{equation}
  for all $v_h \in V_h, z_h \in Z_h$.
\end{lemma}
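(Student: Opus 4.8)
The goal is to bound $\left|\int_{B_h} z_h\cdot\nabla v_h\right|$, with $B_h\subset\Omega_h^\Gamma$, by the three quantities appearing on the right of (\ref{Neu:prop1}). The essential difficulty, compared with the Dirichlet Lemma \ref{LemDir:prop1}, is that here we must extract control of $z_h$ from the \emph{combined} quantity $\|z_h+\nabla v_h\|_{0,\Omega_h^\Gamma}$ rather than from $z_h$ directly, while the only ``free'' gradient control we are allowed to absorb is a fraction $\alpha<1$ of $|v_h|_{1,\Omega_h}^2$ over the whole computational domain. The natural idea is to write $z_h = (z_h+\nabla v_h)-\nabla v_h$, so that
\[
  \int_{B_h} z_h\cdot\nabla v_h = \int_{B_h}(z_h+\nabla v_h)\cdot\nabla v_h - \int_{B_h}|\nabla v_h|^2,
\]
and then to treat the two resulting terms separately.

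The plan is as follows. First I would discard the manifestly favorable second term $-\int_{B_h}|\nabla v_h|^2\le 0$ (or keep it if needed, but it only helps). For the first term, Cauchy--Schwarz and Young give, for any $\varepsilon>0$,
\[
  \left|\int_{B_h}(z_h+\nabla v_h)\cdot\nabla v_h\right|
  \le \frac{1}{2\varepsilon}\|z_h+\nabla v_h\|_{0,B_h}^2 + \frac{\varepsilon}{2}\,|v_h|_{1,B_h}^2,
\]
and since $B_h\subset\Omega_h^\Gamma$ the first piece is already of the form $\delta\|z_h+\nabla v_h\|_{0,\Omega_h^\Gamma}^2$. It thus remains to control $|v_h|_{1,B_h}^2\le|v_h|_{1,\Omega_h^\Gamma}^2$ by the desired combination. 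This is exactly where the patch argument of Lemma \ref{LemDir:prop1} enters, but now it must be run with the \emph{reduced} ghost penalty supported only on $\Gamma_h^i$ rather than on all of $\mathcal{F}_\Gamma$.

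The crux is therefore the analogue of (\ref{Dir:prop1}) in the form
\[
  |v_h|_{1,\Omega_h^\Gamma}^2 \le \alpha_0\,|v_h|_{1,\Omega_h}^2 + \beta_0\,h\left\|\left[\ddn{v_h}\right]\right\|_{0,\Gamma_h^i}^2,
\]
which I would establish by the same compactness-on-a-reference-patch argument: for each patch $\Pi_k=T_k\cup\Pi_k^\Gamma$ of Assumption \ref{asm2}, define the maximal ratio of $|v_h|_{1,\Pi_k^\Gamma}^2 - \beta_0 h\|[\partial_n v_h]\|_{0,E_k}^2$ to $|v_h|_{1,\Pi_k}^2$ over the finite-dimensional, scale-invariant space of piecewise linears on the reference patch, where $E_k\subset\Gamma_h^i$ is the single interior facet joining $T_k$ to $\Pi_k^\Gamma$. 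One shows this maximum $\alpha_0$ is $<1$: if it equalled $1$ then $\nabla v_h=0$ on $T_k$ and $[\partial_n v_h]=0$ across $E_k$, and since $\Pi_k^\Gamma$ is connected to $T_k$ through $E_k$ this forces $\nabla v_h=0$ on all of $\Pi_k$, a contradiction. Summing over the disjoint patches gives the displayed inequality with constants depending only on $M$ and the mesh regularity. The \textbf{main obstacle} I anticipate is precisely verifying that penalizing jumps \emph{only} across $\Gamma_h^i$ (rather than across every internal facet of the cut band) still suffices to rigidify each patch; this works because connectivity of $\Pi_k$ through the single facet $E_k$ propagates the vanishing of the constant gradient from $T_k$ into $\Pi_k^\Gamma$ without needing the interior jumps of $\Pi_k^\Gamma$. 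Finally I would combine this with the Young-inequality split above, choosing $\varepsilon$ and then the patch parameter $\beta_0$ so that $\alpha:=\alpha_0 + \tfrac{\varepsilon}{2}<1$ and collecting the coefficients of the jump term into $\beta$ and of the mismatch term into $\delta$, which yields (\ref{Neu:prop1}).
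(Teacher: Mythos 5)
Your reduction via $z_h=(z_h+\nabla v_h)-\nabla v_h$ and Young's inequality is clean, but it shifts the entire burden onto the auxiliary claim
\begin{equation*}
|v_h|_{1,\Omega_h^\Gamma}^2\le \alpha_0\,|v_h|_{1,\Omega_h}^2+\beta_0\,h\left\|\left[\ddn{v_h}\right]\right\|_{0,\Gamma_h^i}^2,\qquad \alpha_0<1,
\end{equation*}
and this claim is false in general: the reduced ghost penalty, supported only on the facets $\Gamma_h^i$ separating cut elements from interior ones, does not rigidify the cut band by itself. Concretely, whenever the band $\Omega_h^\Gamma$ is locally two elements thick, take a vertex $P$ lying outside $\bar\Omega$ all of whose star elements are cut and none of whose star elements is facet-adjacent to an interior element; for the hat function $v_h=\varphi_P$ one has $|v_h|_{1,\Omega_h^\Gamma}=|v_h|_{1,\Omega_h}>0$ while $\left[\ddn{v_h}\right]=0$ on every facet of $\Gamma_h^i$, which forces $\alpha_0\ge 1$. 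The same defect appears at the patch level: $\nabla v_h=0$ on $T_k$ together with a vanishing jump across the single facet $E_k$ only forces $\nabla v_h=0$ on the one cut element lying across $E_k$ from $T_k$; the remaining elements of $\Pi_k^\Gamma$ touch $E_k$ merely through vertices (or edges in 3D), and continuity of $v_h$ does not propagate the vanishing of a piecewise constant gradient through a vertex. (A smaller point: since the lemma bounds an absolute value, the term $-\int_{B_h}|\nabla v_h|^2$ cannot simply be discarded; it contributes an extra full copy of $|v_h|_{1,B_h}^2$, which makes the auxiliary claim even more load-bearing.)

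The paper's proof keeps $z_h$ and $\nabla v_h$ coupled precisely to avoid this. On each patch it maximizes the ratio of $\|z_h\|_{0,\Pi_k^\Gamma}|v_h|_{1,\Pi_k^\Gamma}-\beta\|z_h+\nabla v_h\|^2_{0,\Pi_k^\Gamma}-\beta h\left\|\left[\ddn{v_h}\right]\right\|^2_{0,\partial T_k\cap\partial\Pi_k^\Gamma}$ to $\frac12\|z_h\|^2_{0,\Pi_k^\Gamma}+\frac12|v_h|^2_{1,\Pi_k}$, and in the contradiction step the vanishing of $\|z_h+\nabla v_h\|_{0,\Pi_k^\Gamma}$ forces $\nabla v_h=-z_h$; since $z_h$ is continuous and $\nabla v_h$ piecewise constant, $\nabla v_h$ is a single constant on all of $\Pi_k^\Gamma$, and only then does the single jump across $E_k$ (plus $\nabla v_h=0$ on $T_k$) annihilate it. It is exactly the continuity of the auxiliary variable $z_h$, exploited \emph{before} any decoupling by Young's inequality, that makes the reduced ghost penalty sufficient; your early application of Young's inequality destroys this mechanism.
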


\begin{proof}
  The boundary $\Gamma$ can be covered by element patches $\{\Pi_k \}_{k = 1,
  \ldots, N_{\Pi}}$ as in Assumption \ref{asm2}. Choose any $\beta > 0$ and consider
  \begin{equation}
    \label{Neu:alph} \alpha := \max_{\Pi_k, z_h, v_h \neq 0}  \frac{\|z_h
    \|_{0, \Pi_k^{\Gamma}} | v_h |_{1, \Pi_k^{\Gamma}} - \beta \|z_h + \nabla
    v_h \|^2_{0, \Pi_k^{\Gamma}} - \beta h \left\|\left[\ddn{v_h}\right]\right\|^2_{0, \partial
    T_k \cap \partial \Pi_k^{\Gamma}}}
		{\frac 12 \|z_h\|_{0, \Pi_k^{\Gamma}}^2 + \frac 12|v_h |_{1, \Pi_k}^2}
  \end{equation}
  where the maximum is taken over all the possible configurations of a patch
  $\Pi_k$ allowed by the mesh regularity and over all the piecewise linear
  functions $v_h$ and $z_h$ on $\Pi_k$. Note that the quantity under the
  $\max$ sign in (\ref{Neu:alph}) is invariant under the scaling
  transformation $x \mapsto hx$, $z_h \mapsto \frac{1}{h} z_h$, $v_h \mapsto v_h$ and is
  homogeneous with respect to $v_h$, $z_h$. Thus, the maximum is indeed attained
  since it is taken over a bounded set in a finite dimensional space.
	
	Clearly, $\alpha \leqslant 1$.
  Supposing $\alpha=1$ would lead to a contradiction. Indeed, if $\alpha=1$, we can
  then take $\Pi_k$, $v_h$, $z_h$ yielding this maximum (in particular, $|v_h |_{1, \Pi_k}^2+\|z_h\|_{0, \Pi_k^\Gamma}^2 >0$). We observe then
	$$   
	  \frac 12 |v_h |_{1, \Pi_k}^2 + \|z_h\|_{0, \Pi_k^{\Gamma}} | v_h |_{1, \Pi_k^{\Gamma}}
		+ \frac 12 \|z_h\|_{0, \Pi_k^{\Gamma}}^2
		+ \beta \|z_h + \nabla v_h \|^2_{0,
    \Pi_k^{\Gamma}} + \beta h \left\|\left[\ddn{v_h}\right]\right\|_{0, \partial T_k \cap
    \partial \Pi_k^{\Gamma}}^2 = 0
 $$
and consequently (recall $|v_h |_{1,\Pi_k}^2=|v_h |_{1,T_k}^2+|v_h |_{1,\Pi_k^\Gamma}^2$)
  \begin{equation}
    \label{Neu:alph1} \frac 12 |v_h |_{1, T_k}^2 + \beta \|z_h + \nabla v_h \|^2_{0,
    \Pi_k^{\Gamma}} + \beta h \left\|\left[\ddn{v_h}\right]\right\|_{0, \partial T_k \cap
    \partial \Pi_k^{\Gamma}}^2 = 0
  \end{equation}
  This implies $\|z_h + \nabla v_h \|_{0, \Pi_k^{\Gamma}}$=0, i.e. $\nabla v_h
  = - z_h$ on $\Pi_k^{\Gamma}$. Since $\nabla v_h$ is piecewise constant and
  $z_h$ is continuous, it means that $\nabla v_h = - z_h =$const on
  $\Pi_k^{\Gamma}$. We also have $\left\|\left[\ddn{v_h}\right]\right\|_{0, \partial T_k \cap
  \partial \Pi_k^{\Gamma}} = 0$, which implies $\nabla v_h =$const on the
  whole $\Pi_k$. Since, by (\ref{Neu:alph1}), $\nabla v_h =0$ on $T_k$, we have finally $\nabla v_h = 0$  on
  $\Pi_k$ and $z_h =0$ on  $\Pi_k^{\Gamma}$, which is in contradiction with $|v_h |_{1, \Pi_k}^2+\|z_h\|_{0, \Pi_k^\Gamma}^2 >0$.
	
Thus $\alpha < 1$ and
\begin{eqnarray*}
  \|z_h \|_{0, \Pi_k^{\Gamma}} |v_h |_{1, \Pi_k^{\Gamma}} \leqslant
  \frac{\alpha}{2} \|z_h \|_{0, \Pi_k^{\Gamma}}^2 + \frac{\alpha}{2} |v_h
  |_{1, \Pi_k}^2 + \beta \|z_h + \nabla v_h \|^2_{0, \Pi_k^{\Gamma}} + \beta h
  \left\|\left[\ddn{v_h}\right]\right\|^2_{0, \partial T_k \cap \partial \Pi_k^{\Gamma}}
\end{eqnarray*}
for all $v_h, z_h$ and all admissible patches $\Pi_k$. We now observe
\begin{align*}
  \left| \int_{B_h} z_h \cdot \nabla v_h \right| &\leqslant \sum_k \left|
  \int_{B_h \cap \Pi_k^{\Gamma}} z_h \cdot \nabla v_h \right| \leqslant \sum_k
  \|z_h \|_{0, \Pi_k^{\Gamma}} |v_h |_{1, \Pi_k^{\Gamma}}
\\
  & \leqslant \frac{\alpha}{2} \|z_h \|_{0, \Omega_h^{\Gamma}}^2 +
   \frac{\alpha}{2} |v_h |_{1, \Omega_h}^2 + \beta \|z_h + \nabla v_h \|^2_{0,
   \Omega_h^{\Gamma}} + \beta h \left\|\left[\ddn{v_h}\right]\right\|^2_{0, \Gamma_h^i} 
\\
  &  \leqslant \alpha \left( 1 + \frac{\varepsilon}{2} \right) |v_h |_{1,
   \Omega_h}^2 + \left( \beta + \frac{\alpha}{2} + \frac{\alpha}{2\varepsilon} \right)  \|z_h +
   \nabla v_h \|^2_{0, \Omega_h^{\Gamma}} + \beta h \left\|\left[\ddn{v_h}\right]\right\|^2_{0,
   \Gamma_h^i} 
\end{align*}
for any $\varepsilon > 0$.
We have used here the following estimate valid  by Young inequality
\[ \|z_h \|_{0, \Omega_h^{\Gamma}}^2 \leqslant \|z_h + \nabla v_h \|_{0,
   \Omega_h^{\Gamma}}^2 + \| \nabla v_h \|_{0, \Omega_h^{\Gamma}}^2 - 2 (z_h +
   \nabla v_h, \nabla v_h)_{0, \Omega_h^{\Gamma}} \]
\[ \leqslant \left( 1 + \frac{1}{\varepsilon} \right) \|z_h + \nabla v_h \|_{0, \Omega_h^{\Gamma}}^2
   +  (1 + \varepsilon)|v_h |_{1, \Omega_h}^2 \]  	
Taking $\varepsilon$ sufficiently small, redefining $\alpha$ as $\alpha \left( 1 + \frac{\varepsilon}{2} \right)$ and 
putting $\delta=\left( \beta + \frac{\alpha}{2} + \frac{\alpha}{2\varepsilon} \right)$ we obtain (\ref{Neu:prop1}).
\end{proof}

\begin{lemma}\label{LemRob:coer}
  Provided $\gamma_{div}, \gamma_{1}$ are sufficiently big, there exists an
  $h$-independent constant $c > 0$ such that 
  \begin{eqnarray*}
    a^N_h (v_h, z_h ; v_h, z_h) \ge c \triple{v_h, z_h}_h^2,
		\quad \forall v_h \in \tilde{V}_h, z_h \in  Z_h
  \end{eqnarray*}
  with
  \begin{eqnarray*}
    \qquad \triple{v, z}_h^2 = |v|_{1, \Omega_h}^2 + \| \Div z \|^2_{0, \Omega_h^{\Gamma}} 
		 + \|z + \nabla v \|^2_{0, \Omega_h^{\Gamma}} + h \left\|\left[\ddn
    v\right]\right\|_{0, \Gamma_h^i}
  \end{eqnarray*}
\end{lemma}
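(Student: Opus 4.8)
The plan is to polarise the rewritten expression (\ref{Neu:ah}) at $(v_h,z_h)$. Taking $u=v=v_h$, $y=z=z_h$ there gives
\begin{align*}
a^N_h(v_h,z_h;v_h,z_h) &= |v_h|^2_{1,\Omega_h} + \int_{B_h} v_h\Div z_h + \int_{B_h} z_h\cdot\nabla v_h \\
&\quad + \gamma_{div}\|\Div z_h\|^2_{0,\Omega_h^\Gamma} + \gamma_1\|z_h+\nabla v_h\|^2_{0,\Omega_h^\Gamma} + \sigma h\left\|\left[\ddn{v_h}\right]\right\|^2_{0,\Gamma_h^i}.
\end{align*}
Four of these terms are already nonnegative and reproduce, up to the factors $\gamma_{div},\gamma_1,\sigma$, the four pieces of $\triple{v_h,z_h}_h^2$. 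The whole task is therefore to absorb the two indefinite strip integrals over $B_h=\Omega_h\setminus\Omega$ into a controlled fraction of these nonnegative contributions.

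The term $\int_{B_h} z_h\cdot\nabla v_h$ is exactly what Lemma \ref{LemNeu:prop1} is designed for: for a prescribed $\beta>0$ it is bounded by $\alpha|v_h|^2_{1,\Omega_h}+\delta\|z_h+\nabla v_h\|^2_{0,\Omega_h^\Gamma}+\beta h\|[\partial v_h/\partial n]\|^2_{0,\Gamma_h^i}$ with $\alpha<1$. The term $\int_{B_h} v_h\Div z_h$ is the crux. Cauchy--Schwarz followed by Young's inequality gives, for any $\eta>0$,
\begin{equation*}
\left|\int_{B_h} v_h\Div z_h\right| \le \frac{1}{2\eta}\|v_h\|^2_{0,B_h} + \frac{\eta}{2}\|\Div z_h\|^2_{0,\Omega_h^\Gamma},
\end{equation*}
the second summand being absorbable by the grad--div term as soon as $\gamma_{div}>\eta/2$. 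The factor $\|v_h\|_{0,B_h}$ must still be tied back to the energy, and this is precisely where the space $\tilde V_h$ is needed: since $v_h$ has zero mean over $\Omega_h$, the Poincar\'e--Friedrichs inequality yields $\|v_h\|_{0,B_h}\le\|v_h\|_{0,\Omega_h}\le C_{PF}|v_h|_{1,\Omega_h}$, with a constant $C_{PF}$ that stays bounded as $h\to0$ because the domains $\Omega_h$ remain trapped between $\Omega$ and a fixed neighbourhood of it.

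Collecting the two estimates one obtains
\begin{align*}
a^N_h(v_h,z_h;v_h,z_h) \ge{} & \Bigl(1-\alpha-\tfrac{C_{PF}^2}{2\eta}\Bigr)|v_h|^2_{1,\Omega_h} + \Bigl(\gamma_{div}-\tfrac{\eta}{2}\Bigr)\|\Div z_h\|^2_{0,\Omega_h^\Gamma} \\
& + (\gamma_1-\delta)\|z_h+\nabla v_h\|^2_{0,\Omega_h^\Gamma} + (\sigma-\beta)h\left\|\left[\ddn{v_h}\right]\right\|^2_{0,\Gamma_h^i}.
\end{align*}
It then remains to fix the constants in the right order so that all four coefficients are positive: first pick $\beta\in(0,\sigma)$, which via Lemma \ref{LemNeu:prop1} determines $\alpha<1$ and $\delta$; then choose $\eta>C_{PF}^2/(2(1-\alpha))$ so that the first coefficient is positive; and finally take $\gamma_{div}>\eta/2$ and $\gamma_1>\delta$. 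Setting $c$ equal to the minimum of the four (positive, $h$-independent) coefficients yields the claimed bound. The main obstacle is the single term $\int_{B_h} v_h\Div z_h$: contrary to the Dirichlet analysis there is no boundary penalty on $\Gamma$ to lean on, and the decisive point is to recognise that the zero-mean constraint built into $\tilde V_h$ is exactly what supplies the missing control of $\|v_h\|_{0,B_h}$, at the price of taking $\gamma_{div}$ (hence $\eta$) large. This ordering also accounts for the hypothesis of the lemma, namely that $\gamma_{div}$ and $\gamma_1$ must be big while $\sigma$ only needs to exceed the freely chosen $\beta$; the only remaining technical care concerns the $h$-uniform boundedness of $C_{PF}$ over the family $\{\Omega_h\}$.
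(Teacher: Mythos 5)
Your proposal is correct and follows essentially the same route as the paper's proof: polarise the rewritten form (\ref{Neu:ah}), absorb $\int_{B_h} z_h\cdot\nabla v_h$ via Lemma \ref{LemNeu:prop1}, absorb $\int_{B_h} v_h\Div z_h$ via Cauchy--Schwarz/Young plus the Poincar\'e inequality on $\tilde V_h$, and then order the choice of $\beta,\alpha,\delta,\eta,\gamma_{div},\gamma_1$. The one step you leave under-justified is the $h$-uniformity of the Poincar\'e constant: it does not follow merely from $\Omega\subset\Omega_h\subset$ a fixed neighbourhood, because the zero-mean constraint is posed on the varying domain $\Omega_h$; the paper supplies the missing argument by writing $|\int_\Omega v_h|=|\int_{\Omega_h\setminus\Omega} v_h|$, reducing to the Poincar\'e inequality on the fixed domain $\Omega$, and controlling $\|v_h\|_{0,\Omega_h\setminus\Omega}$ with Lemma \ref{Prelim1} and a trace inequality.
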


\begin{proof}
  Expression (\ref{Neu:ah}) for $a^N_h$ implies for all $v_h \in \tilde{V}_h$, $z_h \in Z_h$ 
  \begin{align*}
    a^N_h (v_h, z_h ; v_h, z_h) = |v_h |_{1, \Omega_h}^2 &
		+ \int_{B_h} (v_h\Div z_h + z_h \cdot \nabla v_h) \\
    &+ \gamma_{div} \| \Div z_h \|^2_{0, \Omega_h^{\Gamma}} + \gamma_{1} \|z_h + \nabla
    v_h \|^2_{0, \Omega_h^{\Gamma}} + \sigma h \left\|\left[\ddn{v_h}\right]\right\|_{0,
    \Gamma_h^i}
  \end{align*}
  A combination of (\ref{Neu:prop1}) with the Young inequality (for any
  $\varepsilon > 0$) yields  
  \begin{align}
    a^N_h (v_h, z_h ; v_h, z_h) &\geqslant (1 - \alpha) |v_h |_{1, \Omega_h}^2
		    - \frac{\varepsilon}{2} \|v_h \|^2_{0, \Omega_h^{\Gamma}}
				\notag\\
    &+ \left( \gamma_{div} - \frac{1}{2 \varepsilon} \right) \| \Div z_h \|^2_{0,
    \Omega_h^{\Gamma}} + (\gamma_{1} - \delta)  \|z_h + \nabla v_h \|^2_{0,
    \Omega_h^{\Gamma}} + (\sigma - \beta) h \left\|\left[\ddn{v_h}\right]\right\|_{0,
    \Gamma_h^i}^2
		\label{aNhest1}
  \end{align}
	
\red{
  We have for any $v_h \in \tilde{V}_h$
\begin{eqnarray}
  \|v_h \|_{0, \Omega_h}^{} \leqslant C_p^{}  |v_h |_{1, \Omega_h}^{} 
  \label{PoinRob}
\end{eqnarray}
with a mesh independent constant $C_p$. This is the standard Poincaré
inequality (recall that $\int_{\Omega_h} v_h = 0$) but one should check that
$C_p$ does not depend on $\Omega_h$. This can be easily done thanks to the
Poincaré inequality on $\Omega$. Indeed, denoting $\bar{v}_{h, \Omega} =
\frac{1}{| \Omega |} \int_{\Omega} v_h$ we have
\begin{eqnarray*}
  \| v_h \|_{0, \Omega_h} \leqslant \| v_h - \bar{v}_{h, \Omega} \|_{0,
  \Omega} + \| \bar{v}_{h, \Omega} \|_{0, \Omega} + \| v_h \|_{0, \Omega_h
  \setminus \Omega}
\end{eqnarray*}
and
\begin{eqnarray*}
  \| \bar{v}_{h, \Omega} \|_{0, \Omega} = \frac{1}{\sqrt{| \Omega |}} \left|
  \int_{\Omega} v_h \right| = \frac{1}{\sqrt{| \Omega |}} \left|
  \int_{\Omega_h \setminus \Omega} v_h \right| \leqslant \frac{\sqrt{|
  \Omega_h \setminus \Omega |}}{\sqrt{| \Omega |}} \| v_h \|_{0, \Omega_h
  \setminus \Omega}
\end{eqnarray*}
The ratio of domain measures $| \Omega_h \setminus \Omega | / | \Omega |$ is
uniformly bounded under the assumption of $h$ sufficiently small. We
have thus, by Poincaré inequality on $\Omega$ (with a constant which is
obviously mesh independent) and then by Lemma \ref{Prelim1}
\begin{eqnarray*}
  \| v_h \|_{0, \Omega_h} \leqslant C (| v_h |_{1, \Omega} + \| v_h \|_{0,
  \Omega_h \setminus \Omega}) \leqslant C \left( | v_h |_{1, \Omega} +
  \sqrt{h} \| v_h \|_{0, \partial \Omega} + h | v_h |_{1, \Omega_h \setminus
  \Omega} \right)
\end{eqnarray*}
Finally, invoking the trace inequality $\| v_h \|_{0, \partial \Omega}
\leqslant C (\| v_h \|_{0, \Omega} + | v_h |_{1, \Omega})$ and recalling that
$h$ is sufficiently small, we obtain (\ref{PoinRob}).
}

  Substituting (\ref{PoinRob}) into (\ref{aNhest1}) entails  
  \begin{align*}
    a^N_h (v_h, z_h ; v_h, z_h) & \geqslant 
		\left( 1 - \alpha -  \frac{\varepsilon}{2} C_p^2 \right) |v_h |_{1, \Omega_h}^2 
			+ \left( \gamma_{div} - \frac{1}{2 \varepsilon} \right) \| \Div z_h \|^2_{0,
    \Omega_h^{\Gamma}}\\
    & + (\gamma_{1} - \delta)  \|z_h + \nabla v_h \|^2_{0, \Omega_h^{\Gamma}} +
    (\sigma - \beta) h \left\|\left[\ddn{v_h}\right]\right\|_{0, \Gamma_h^i}
  \end{align*}
    Taking $\varepsilon,\beta$ sufficiently small and $\gamma_{1}, \gamma_{div}$ sufficiently
  big this bounds $a^N_h (v_h, z_h ; v_h, z_h)$ from below by $c \triple{v_h,
  z_h}_h^2$ as claimed.
\end{proof}

\subsection{Neumann boundary conditions: error estimates. }\label{ConvRob}

We can now establish an optimal $H^1$ error estimate for the method (\ref{Neu:Ph})--(\ref{Rob:ah}) using the coerciveness of $a^N_h$ provided by Lemma \ref{LemRob:coer}. An $L^2$ error estimate will follow in Theorem \ref{ThRob:L2}.

\begin{theorem}
  \label{ThNeu:aprio} Suppose $f \in H^1 (\Omega_h)$, $g \in H^{3 / 2}
  (\Gamma)$ and let $u \in H^3 (\Omega)$ be a solution to (\ref{Neu:P}),
  $(u_h,y_h) \in \tilde{V}_h\times Z_h$ be the solution to (\ref{Neu:Ph}). Provided $\gamma_{div}, \gamma_{1}$ are sufficiently big, there exists an $h$-independent constant $C > 0$ such that
  \begin{equation}
    |u - u_h |_{1, \Omega}  \le Ch (\|f\|_{1, \Omega_h} +\|g\|_{3 / 2, \Gamma})
    . \label{Neu:aprio}
  \end{equation}
\end{theorem}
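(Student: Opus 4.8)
The plan is to follow the same Céa-type strategy used for the Dirichlet case in Theorem \ref{ThDir:aprio}, but now working in the product space $\tilde V_h\times Z_h$ equipped with the triple norm $\triple{\cdot,\cdot}_h$ introduced in Lemma \ref{LemRob:coer}. The three ingredients are: coercivity (already furnished by Lemma \ref{LemRob:coer}), Galerkin orthogonality, and the construction of a good discrete comparison function together with a continuity bound on $a^N_h$. First I would extend the exact solution $u\in H^3(\Omega)$ to $\tilde u\in H^3(\Omega_h)$ with $\|\tilde u\|_{3,\Omega_h}\le C(\|f\|_{1,\Omega_h}+\|g\|_{3/2,\Gamma})$, and introduce the exact auxiliary field $\tilde y=-\nabla\tilde u$ on $\Omega_h^\Gamma$. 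Since $-\Delta u=f$ on $\Omega$, we have $\operatorname{div}\tilde y=f$ there, and the extension controls the mismatch $\tilde f:=-\Delta\tilde u$ versus $f$ on $B_h=\Omega_h\setminus\Omega$.

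The key structural point is consistency: I would verify that the pair $(\tilde u,\tilde y)$ satisfies the variational identity obtained from (\ref{Rob:ah}) up to a residual supported on $B_h$. Plugging $(\tilde u,\tilde y)$ into $a^N_h$ and comparing with the right-hand side of (\ref{Neu:Ph}), the Nitsche-type boundary terms $\int_{\Gamma_h}\tilde y\cdot n\,v_h-\int_\Gamma \tilde y\cdot n\,v_h$ collapse via the divergence theorem on $B_h$ to $\int_{B_h}(\operatorname{div}\tilde y\,v_h+\tilde y\cdot\nabla v_h)$, exactly as in the rewriting (\ref{Neu:ah}); the grad–div and reconstruction penalty terms vanish because $\tilde y+\nabla\tilde u=0$ and $\operatorname{div}\tilde y=\tilde f=f$ on $\Omega$, while the ghost term vanishes because $\tilde u\in H^3$ has no normal-derivative jumps. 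What survives is a residual of the form $\int_{B_h}(\tilde f-f)(\cdots)$ plus $\gamma_{div}$-weighted integrals of $(\tilde f-f)$ against $\operatorname{div} z_h$ on $\Omega_h^\Gamma$, all controllable by $\|\tilde f-f\|_{0,\Omega_h^\Gamma}$, which by Lemma \ref{Prelim1} applied to $\tilde f-f$ (vanishing on $\Omega$) is $O(h)\|\tilde f-f\|_{\cdot}$. This yields Galerkin orthogonality modulo an $O(h)$ data term.

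Next I would take the interpolants $w_h=I_h\tilde u\in V_h$ (adjusted by a constant to lie in $\tilde V_h$) and $\zeta_h=I_h\tilde y\in Z_h$, and estimate $\triple{u_h-w_h,\,y_h-\zeta_h}_h$ by testing against itself: coercivity gives a lower bound, Galerkin orthogonality replaces $u_h,y_h$ by $\tilde u,\tilde y$, and then a term-by-term continuity bound on $a^N_h(\tilde u-w_h,\tilde y-\zeta_h;\,\cdot)$ must be carried out. Here the boundary terms $\int_{\Gamma_h}(\tilde y-\zeta_h)\cdot n\,v_h$ and $\int_\Gamma(\tilde y-\zeta_h)\cdot n\,v_h$ are the delicate ones: they must be controlled using the trace-type Lemmas \ref{Prelim17} and \ref{Prelim2} together with the standard $P_1$ interpolation estimates for $\tilde y\in H^2(\Omega_h^\Gamma)^d$, producing the $O(h)$ factor. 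Combining with the interpolation estimate $\triple{\tilde u-w_h,\tilde y-\zeta_h}_h\le Ch(\|f\|_{1,\Omega_h}+\|g\|_{3/2,\Gamma})$ and the triangle inequality gives $\triple{u-u_h,\,y-y_h}_h\le Ch(\cdots)$, from which (\ref{Neu:aprio}) follows since $|u-u_h|_{1,\Omega}\le|u-u_h|_{1,\Omega_h}\le\triple{u-u_h,y-y_h}_h$.

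The main obstacle I anticipate is the handling of the two surface integrals $\int_{\Gamma_h}\tilde y\cdot n\,v_h$ and $\int_\Gamma \tilde y\cdot n\,v_h$ within the continuity estimate, because the test function $v_h$ carries no smallness of its own (unlike the Dirichlet case, where $\|v_h\|_{0,\Gamma}$ was part of the norm). The resolution is that $\tilde y$ is not tested raw but through the reconstruction error $\tilde y-\zeta_h$, whose normal trace on $\Gamma$ and $\Gamma_h$ is $O(h^{3/2})$ in $L^2$ by interpolation plus the strip trace Lemmas \ref{Prelim17}--\ref{Prelim2}; pairing this against $\|v_h\|_{0,\Gamma_h}\le C\triple{v_h,\cdot}_h$ (via Lemma \ref{Prelim2}, estimate (\ref{Dir:prop3}), after absorbing the $h^{-1/2}$) closes the bound at the correct order $h$. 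Getting the $h$-powers to balance in these boundary pairings — rather than the coercivity or the consistency, which are essentially mechanical given the earlier lemmas — is where the care is required.
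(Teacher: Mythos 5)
Your skeleton is exactly the paper's: extend $u$ to $\tilde u\in H^3(\Omega_h)$, set $y=-\nabla\tilde u$ on $\Omega_h^\Gamma$, derive Galerkin orthogonality (\ref{Rob:GO}) with the residual $(\tilde f-f,v_h)+\gamma_{div}(\tilde f-f,\Div z_h)$, control that residual by Lemma \ref{Prelim1} applied to $\tilde f-f$ (which vanishes on $\Omega$, whence the $O(h)$ gain and the need for $f\in H^1$), and close with coercivity, a continuity bound on $a^N_h(e_u,e_y;\cdot,\cdot)$, interpolation, and the triangle inequality. The one place you deviate is precisely the place you flag as delicate, and there your cited tools do not quite do the job: Lemmas \ref{Prelim17} and \ref{Prelim2} bound norms \emph{on $\Gamma_h$ in terms of norms on $\Gamma$} plus volume terms, not the reverse, so they do not give you the $O(h^{3/2})$ bound on $\|\tilde y-\zeta_h\|_{0,\Gamma}$ that your pairing of $\int_\Gamma(\tilde y-\zeta_h)\cdot n\,v_h$ requires; for that you would need the standard cut-element trace inequality $\|w\|^2_{0,\Gamma\cap T}\le C(h^{-1}\|w\|^2_{0,T}+h|w|^2_{1,T})$, which the paper never states or proves. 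The paper sidesteps this entirely by using the rewritten form (\ref{Neu:ah}) not only for consistency (as you do) but also in the continuity estimate, so that $\int_{\Gamma_h}e_y\cdot n\,v_h-\int_\Gamma e_y\cdot n\,v_h$ becomes $\int_{B_h}(v_h\Div e_y+e_y\cdot\nabla v_h)$ and is bounded by $\|\Div e_y\|_{0,B_h}\|v_h\|_{0,B_h}+\|e_y\|_{0,B_h}|v_h|_{1,B_h}$ using only interpolation and the uniform Poincar\'e inequality (\ref{PoinRob}) on $\tilde V_h$ (which is also what you implicitly need, and should cite, to control $\|v_h\|_{0,B_h}$ or $\|v_h\|_{0,\Gamma_h}$ by the triple norm). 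Since you already invoke the divergence-theorem rewriting in your consistency step, the fix is simply to keep using it in the continuity step; with that adjustment your argument matches the paper's and is correct.
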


\begin{proof}
  Under the Theorem's assumptions, the solution to (\ref{Neu:P}) is indeed in
  $H^3 (\Omega)$ and it can be extended to a function $\tilde{u} \in H^3
  (\Omega_h)$ such that $\tilde{u} = u$ on $\Omega$ and $\| \tilde{u} \|_{3,
  \Omega_h} \le C (\|f\|_{1, \Omega} +\|g\|_{3 / 2, \Gamma})$. Introduce $y =
  - \nabla \tilde{u}$ on $\Omega_h^{\Gamma}$. Clearly, $\tilde{u}$, $y$
  satisfy

  \begin{eqnarray*}
    a^N_h (\tilde{u}, y ; v_h, z_h) = \int_{\Omega_h} \tilde{f} v_h +
    \int_{\Gamma} gv_h + \gamma_{div} \int_{\Omega_h^{\Gamma}} \tilde{f}  \Div z_h
    \quad \forall (v_h, z_h) \in \tilde{V}_h \times Z_h
  \end{eqnarray*}
  with $\tilde{f} := - \Delta \tilde{u}$. It entails a Galerkin orthogonality relation 
	\begin{equation}
	 a^N_h  (\tilde{u} - u_h,y-y_h; v_h,z_h) = 
	  \int_{\Omega_h} (\tilde{f} - f) v_h
		+ \gamma_{div}\int_{\Omega_h} (\tilde{f} - f) \Div z_h, \quad \forall (v_h, z_h) \in \tilde{V}_h \times Z_h
	\label{Rob:GO}
	\end{equation} 
	We have then using the standard nodal interpolation $I_h : C (\bar{\Omega}_h) \to \tilde{V}_h$
  and recalling Lemma \ref{LemRob:coer}
  \begin{align*}
    \frac{1}{c} \triple{u_h - I_h  \tilde{u}, y_h - I_h y}_h & \le \sup_{(v_h,
    z_h) \in \tilde{V}_h \times Z_h}  \frac{a^N_h  (u_h - I_h  \tilde{u}, y_h - I_h
    y ; v_h, z_h)}{\triple{v_h, z_h}_h}\\
    & = \sup_{(v_h, z_h) \in \tilde{V}_h \times Z_h}  \frac{a^N_h (e_u, e_y ; v_h,
    z_h) + (f - \tilde{f}, v_h)_{L^2 (\Omega_h)} + \gamma_{div} (f - \tilde{f}, \Div
    z_h)_{L^2 (\Omega^{\Gamma}_h)}}{\triple{v_h, z_h}_h}\\
    & 
  \end{align*}
    with $e_u = \tilde{u} - I_h  \tilde{u}, e_y = y_{} - I_h y$. 
		Recalling (\ref{Neu:ah}), we can bound
  \begin{align*}
    a^N_h (e_u, e_y ; v_h, z_h) & \leqslant |e_u |_{1, \Omega_h} |v_h |_{1,
    \Omega_h} + \left\| \Div e_y \right\|_{0, B_h} \|v_h \|_{0, B_h} + \|e_y
    \|_{0, B_h} |v_h |_{1, B_h} 
		\\
		&+ \gamma_{div} \left\| \Div e_y \right\|_{0,
    \Omega_h^{\Gamma}} \left\| \Div z_h \right\|_{0, \Omega_h^{\Gamma}}
    + \gamma_{1} \|e_y + \nabla e_u \|_{0, \Omega_h^{\Gamma}}  \|z_h +
    \nabla v_h \|_{0, \Omega_h^{\Gamma}} + \sigma h \left\|\left[\ddn e_u\right]\right\|_{0,
    \Gamma_h^i}  \left\|\left[\ddn{v_h}\right]\right\|_{0, \Gamma_h^i}\\
    & \leqslant C\left( |e_u |^2_{1, \Omega_h} + \left\| e_u \right\|_{0,\Gamma}^2 + \left| \Div e_y \right|^2_{0,
    \Omega_h^{\Gamma}} +\|e_y \|^2_{0, \Omega_h^{\Gamma}} + h\left\|\left[\ddn{e_u}\right]\right\|_{0, \Gamma_h^i}^2 \right)^{\frac{1}{2}} \triple{v_h, z_h}_h
  \end{align*}
    By the usual interpolation estimates this entails
  \begin{eqnarray*}
    a^N_h (e_u, e_y ; v_h, z_h) \leqslant Ch (| \tilde{u} |_{2, \Omega_h} +
    |y|_{2, \Omega^{\Gamma}_h}) \triple{v_h, z_h}_h \leqslant Ch \|\tilde{u}\|_{3, \Omega_h} \triple{v_h, z_h}_h
  \end{eqnarray*}
  since $y = - \nabla \tilde{u}$ on $\Omega_h^{\Gamma}$.   Moreover,
  \begin{eqnarray*}
    (f - \tilde{f}, v_h)_{L^2 (\Omega_h)} + \gamma_{div} (f - \tilde{f}, \Div z_h)_{L^2 (\Omega^{\Gamma}_h)}
		\leqslant \|f - \tilde{f} \|_{0, \Omega_h} 
		\left(\|v_h \|_{0, \Omega_h} +\|\Div z_h \|_{0, \Omega_h^\Gamma} \right)
		\le C \|f - \tilde{f} \|_{0, \Omega_h}
    \triple{v_h,z_h}_h
  \end{eqnarray*}
	thanks to (\ref{PoinRob}). 
	We recall now that $f = \tilde{f}$ on $\Omega$ so that, thanks to Lemma \ref{Prelim1},
  \begin{equation}\label{fftil}
    \|f - \tilde{f} \|_{0, \Omega_h} = \|f - \tilde{f} \|_{0, \Omega_h^\Gamma} \le Ch |f -
    \tilde{f} |_{1, \Omega_h} \le Ch (|f|_{1, \Omega_h} +\| \tilde{u} \|_{3,
    \Omega_h}) 
  \end{equation}
  and conclude
  \begin{eqnarray*}
    \triple{u_h - I_h  \tilde{u}}_h \le C (h| \tilde{u} |_{2, \Omega_h} +\|f -
    \tilde{f} \|_{0, \Omega_h})
  \end{eqnarray*}
  Combining the estimates above with the triangle inequality proves
  $\triple{u_h - \tilde{u},y_h-y}_h \le Ch (\|f\|_{1, \Omega_h} +\|g\|_{5 / 2,
  \Gamma})$.
\end{proof}

\begin{remark}
The proof above does not rely on a solution to the non-standard boundary value problem (\ref{Neu:Pext}) on $\Omega_h$. We rather use the well defined solution $u$ to problem (\ref{Neu:P}) and extend it to $\Omega_h$. The optimal convergence is then obtained at the expense of a stronger than usual assumption on the right-hand side in (\ref{Dir:P}): we need $u\in H^3, f\in H^1$ where as $u\in H^2, f\in L^2$ suffices for standard $P_1$ finite elements on a conforming mesh.   
\end{remark}

\begin{theorem}\label{ThRob:L2}
  Under the assumptions of Theorem \ref{ThNeu:aprio}, there exists an
  $h$-independent constant $C > 0$ such that
  \begin{equation}
    \inf_{c\in\RR}\|u - u_h -c \|_{0, \Omega} \le Ch^{3 / 2}  (\|f\|_{1, \Omega_h} +\|g\|_{3/2, \Gamma}) . \label{Neu:aprioL2}
  \end{equation}
\end{theorem}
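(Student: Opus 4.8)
The plan is to run an Aubin–Nitsche duality argument in the spirit of Theorem \ref{ThDir:L2}, adapted to the Neumann setting and to the mixed structure of $a^N_h$. Write $e = u - u_h$ on $\Omega$ and let $\bar e = \frac{1}{|\Omega|}\int_\Omega e$, so that $\inf_{c\in\RR}\|u - u_h - c\|_{0,\Omega} = \|e - \bar e\|_{0,\Omega}$. First I would introduce the dual Neumann problem $-\Delta w = e - \bar e$ in $\Omega$, $\ddn w = 0$ on $\Gamma$, normalized by $\int_\Omega w = 0$; the compatibility condition holds since $\int_\Omega(e-\bar e)=0$, and elliptic regularity gives $\|w\|_{2,\Omega}\le C\|e-\bar e\|_{0,\Omega}$. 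Extend $w$ to $\tilde w\in H^2(\Omega_h)$ preserving this bound. Integrating by parts and using $\ddn w=0$ yields the starting identity $\|e-\bar e\|_{0,\Omega}^2 = \int_\Omega \nabla(\tilde u - u_h)\cdot\nabla\tilde w$, where $\tilde u$ is the $H^3$ extension of $u$ from the proof of Theorem \ref{ThNeu:aprio} and $y=-\nabla\tilde u$ on $\Omega_h^\Gamma$.

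The decisive choice is that of the dual test functions: I would feed $(\tilde w, 0)$ — the auxiliary vector field set to zero — into the rewritten expression (\ref{Neu:ah}) for $a^N_h$. Splitting $\int_{\Omega_h}=\int_\Omega+\int_{B_h}$ in the leading term and isolating $\int_\Omega\nabla(\tilde u-u_h)\cdot\nabla\tilde w$ gives the exact identity
\[
\|e-\bar e\|_{0,\Omega}^2 = a^N_h(\tilde u - u_h, y - y_h; \tilde w, 0) - R,
\]
where $R$ collects $\int_{B_h}\nabla(\tilde u-u_h)\cdot\nabla\tilde w$, the two terms $\int_{B_h}(\tilde w\,\Div(y-y_h)+(y-y_h)\cdot\nabla\tilde w)$ and $\gamma_{1}\int_{\Omega_h^\Gamma}\big((y-y_h)+\nabla(\tilde u-u_h)\big)\cdot\nabla\tilde w$; the grad-div term vanishes because the dual flux is $0$, and the ghost term drops out since $[\ddn{\tilde w}]=0$ for $\tilde w\in H^2$. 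The dual strip estimates come from Corollary \ref{Prelim12} and Lemma \ref{Prelim1} applied to $\tilde w$ and $\nabla\tilde w$: with the trace bounds $\|\tilde w\|_{0,\Gamma},\|\nabla\tilde w\|_{0,\Gamma}\le C\|w\|_{2,\Omega}$ one obtains $\|\tilde w\|_{0,\Omega_h^\Gamma}+|\tilde w|_{1,\Omega_h^\Gamma}\le Ch^{1/2}\|e-\bar e\|_{0,\Omega}$. Pairing these $O(h^{1/2})$ factors against the primal quantities $|\tilde u-u_h|_{1,\Omega_h}$, $\|\Div(y-y_h)\|_{0,\Omega_h^\Gamma}$, $\|(y-y_h)+\nabla(\tilde u-u_h)\|_{0,\Omega_h^\Gamma}$ and $\|y-y_h\|_{0,\Omega_h^\Gamma}$ (the last controlled by $\|(y-y_h)+\nabla(\tilde u-u_h)\|_{0,\Omega_h^\Gamma}+|\tilde u-u_h|_{1,\Omega_h}$), all of order $\triple{\tilde u-u_h,y-y_h}_h=O(h)$ by Theorem \ref{ThNeu:aprio}, bounds every piece of $R$ by $Ch^{3/2}\|e-\bar e\|_{0,\Omega}(\|f\|_{1,\Omega_h}+\|g\|_{3/2,\Gamma})$.

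It remains to estimate $a^N_h(\tilde u-u_h,y-y_h;\tilde w,0)$. I would split it as $a^N_h(\cdots;\tilde w-w_h,0)+a^N_h(\cdots;w_h,0)$ with $w_h=I_h\tilde w$ corrected by its $\Omega_h$-mean so that $w_h\in\tilde V_h$ (the constant shift is $O(h)\|e-\bar e\|_{0,\Omega}$ and, multiplied by $\int_{B_h}\Div(y-y_h)$, contributes only $O(h^{5/2})$). The interpolation term is handled by the term-by-term continuity bound of $a^N_h$: each factor carrying $\tilde w-w_h$ is a strip interpolation error of order $h\|\tilde w\|_{2,\Omega_h^\Gamma}$ (in the $H^1$ and normal-jump seminorms) or $h^2$ (in $L^2$), so this term is $O(h^2)\|e-\bar e\|_{0,\Omega}$. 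Finally, Galerkin orthogonality (\ref{Rob:GO}) with $z_h=0$ gives $a^N_h(\cdots;w_h,0)=\int_{\Omega_h}(\tilde f-f)w_h$, which reduces to $\Omega_h^\Gamma$ since $\tilde f=f$ on $\Omega$ and is bounded by $\|\tilde f-f\|_{0,\Omega_h^\Gamma}\|w_h\|_{0,\Omega_h^\Gamma}\le Ch^{3/2}\|e-\bar e\|_{0,\Omega}(\|f\|_{1,\Omega_h}+\|g\|_{3/2,\Gamma})$ using (\ref{fftil}) and the $O(h^{1/2})$ strip bound on $w_h$. Collecting all contributions yields $\|e-\bar e\|_{0,\Omega}^2\le Ch^{3/2}(\|f\|_{1,\Omega_h}+\|g\|_{3/2,\Gamma})\|e-\bar e\|_{0,\Omega}$, whence the claim after dividing. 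The main obstacle, and the reason the argument departs from the Dirichlet case, is exactly the choice of a \emph{zero} dual flux: taking the natural $z_w=-\nabla\tilde w$ would revive the grad-div consistency term $\gamma_{div}\int_{\Omega_h^\Gamma}(\tilde f-f)\Div z_{w,h}$, in which $\Div z_w=\Delta\tilde w$ is only $O(1)$ in $L^2(\Omega_h^\Gamma)$ and would degrade the rate to $O(h)$; setting the dual flux to zero preserves the $h^{3/2}$ order at the mild price of keeping the uncancelled $B_h$ terms, which the strip estimates absorb.
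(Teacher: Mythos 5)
Your proposal is correct and follows essentially the same route as the paper's proof: the Neumann dual problem with mean-zero data, the rewritten form (\ref{Neu:ah}) tested against a dual pair with \emph{zero} flux component, the $O(\sqrt{h})$ strip bounds from Lemma \ref{Prelim1}/Corollary \ref{Prelim12}, the mean-correction of $I_h\tilde w$ to land in $\tilde V_h$ before invoking (\ref{Rob:GO}), and the data-oscillation bound (\ref{fftil}). The only difference is organizational (you pass through $a^N_h(\cdot;\tilde w,0)$ and then split off the interpolation error, whereas the paper splits $w=(w-w_h)+w_h$ first), and your closing remark about why the dual flux must be taken to be zero makes explicit a choice the paper leaves implicit.
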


\begin{proof}
  Let us introduce $w : \Omega \to \mathbb{R}$ such that
  \begin{eqnarray*}
     - \Delta w = u - u_h-c  \text{ in } \Omega, \qquad
    \frac{\partial w}{\partial n} = 0 \text{ on } \Gamma, 
		\qquad \int_\Omega w=0
		\label{Neu:z}
  \end{eqnarray*}
	with $c\in\RR$ chosen to ensure that this problem is well posed, i.e. $c=\frac{1}{|\Omega|}\int_\Omega (u-u_h)$.
  By elliptic regularity, $\|w\|_{2, \Omega} \leq C \|u - u_h -c \|_{0, \Omega}$.
  Let $\tilde{w}$ be an extension of $w$ from $\Omega$ to $\Omega_h$
  preserving the $H^2$ norm estimate and set $w_h = I_h  \tilde{w}$. Integration by parts and interpolation estimates yield
	\begin{align*}
  \|u - u_h -c\|_{0, \Omega}^2 & = \int_{\Omega} \nabla (u - u_h) \cdot \nabla
  (w - w_h) + \int_{\Omega} \nabla (u - u_h) \cdot \nabla w_h \\
  & \le Ch | \tilde{u} - u_h |_{1, \Omega_h} |\tilde w|_{2, \Omega_h}
	+ \left| \int_{\Omega}\nabla (u - u_h) \cdot \nabla w_h  \right|
\end{align*}
	We now rewrite the bilinear form (\ref{Neu:ah}) as
\begin{align*}
  a^N_h (u, y ; v, z) & = \int_{\Omega} \nabla u \cdot \nabla v 
	+ \int_{B_h} (v\Div{y} + (y + \nabla u) \cdot \nabla  v) + \gamma_{div} \int_{\Omega_h^{\Gamma}} \Div y \Div z\\
  & + \gamma_{1} \int_{\Omega_h^{\Gamma}} (y + \nabla u) \cdot (z + \nabla v) +
  \sigma h \int_{\Gamma_h^i} \left[ \frac{\partial u}{\partial n} \right]
  \cdot \left[ \frac{\partial v}{\partial n} \right]
\end{align*}	
so that the Galerkin orthogonality relation (\ref{Rob:GO}) with $v_h=w_h-\bar{w}_{h,\Omega_h}, z_h=0$ and $\bar{w}_{h,\Omega_h}$ the average of $w_h$ over $\Omega_h$ becomes 
  \begin{align*}
      \int_{\Omega} \nabla (\tilde{u} - u_h) \cdot \nabla w_h 
			&			+ \int_{B_h}  (\Div (y - y_h) (w_h-\bar{w}_{h,\Omega_h}) + (y + \nabla \tilde{u} - y_h - \nabla u_h) \cdot
      \nabla w_h)\\
      & + \gamma_{1} \int_{\Omega_h^{\Gamma}} (y + \nabla \tilde{u} - y_h - \nabla u_h) \cdot \nabla w_h  
			+ \sigma h \int_{\Gamma_h^i} \left[ \frac{\partial (\tilde{u} -u_h)}{\partial n} \right] \cdot \left[ \frac{\partial w_h}{\partial n} \right]\\
      & = \int_{\Omega_h} (\tilde{f} - f) (w_h-\bar{w}_{h,\Omega_h})
  \end{align*}
  Recalling the definition of the triple norm from Lemma \ref{LemRob:coer}, this leads to
  \begin{align*}
    \left| \int_{\Omega} \nabla (\tilde{u} - u_h) \cdot \nabla w_h 
		 \right|
     &\leqslant C \triple{\tilde{u} - I_h  \tilde{u}, y - I_h y}_h 
		\left( \|w_h-\bar{w}_{h,\Omega_h}\|_{0, \Omega_h^{\Gamma}} + |w_h|_{1, \Omega_h^{\Gamma}} + \sqrt{h} \left\|\left[\ddn{w_h}\right]\right\|_{0,
    \Gamma_h^i} \right) \\
		&+ \| \tilde{f} - f \|_{0, \Omega_h^{\Gamma}} \| w_h-\bar{w}_{h,\Omega_h}\|_{0, \Omega_h^{\Gamma}}
  \end{align*}
  By Lemma \ref{Prelim1} and interpolation estimates
  \begin{eqnarray*}
    \| w_h \|_{0, \Omega_h^{\Gamma}} \leqslant \| \tilde{w} - I_h \tilde{w}
    \|_{0, \Omega_h^{\Gamma}} + \| \tilde{w} \|_{0, \Omega_h^{\Gamma}}
    \leqslant {Ch}^2 | \tilde{w} |_{2, \Omega_h^{\Gamma}} + C \left(
    \sqrt{h} \| \tilde{w} \|_{0, \Gamma} + h| \tilde{w} |_{1,
    \Omega_h^{\Gamma}} \right) \leqslant C \sqrt{h} \| \tilde{w} \|_{2,
    \Omega_h}
  \end{eqnarray*}
  and similarly
  \begin{eqnarray*}
    \| \nabla w_h \|_{0, \Omega_h^{\Gamma}} \leqslant \| \nabla (\tilde{w} -
    I_h \tilde{w}) \|_{0, \Omega_h^{\Gamma}} + \| \nabla \tilde{w} \|_{0,
    \Omega_h^{\Gamma}} \leqslant {Ch}^{} | \tilde{w} |_{2,
    \Omega_h^{\Gamma}} + C \left( \sqrt{h} \| \nabla \tilde{w} \|_{0, \Gamma}
    + h| \nabla \tilde{w} |_{1, \Omega_h^{\Gamma}} \right) \leqslant C
    \sqrt{h} \| \tilde{w} \|_{2, \Omega_h}
  \end{eqnarray*}
	Finally,
  \begin{multline*}
    \| \bar{w}_{h, \Omega_h} \|_{0, \Omega_h^{\Gamma}} = \frac{\sqrt{|
    \Omega_h^{\Gamma} |}}{| \Omega_h |} \left| \int_{\Omega_h} w_h \right| =
    \frac{\sqrt{| \Omega_h^{\Gamma} |}}{| \Omega_h |} \left| \int_{\Omega_h}
    I_h \tilde{w} - \int_{\Omega} w \right| = \frac{\sqrt{| \Omega_h^{\Gamma}
    |}}{| \Omega_h |} \left| \int_{\Omega_h} (I_h \tilde{w} - \tilde{w}) +
    \int_{\Omega_h \setminus \Omega} \tilde{w} \right| \\ \leqslant \frac{\sqrt{|
    \Omega_h^{\Gamma} |}}{\sqrt{| \Omega_h |}} \| \tilde{w} - I_h \tilde{w}
    \|_{0, \Omega_h} + \frac{| \Omega_h^{\Gamma} |}{| \Omega_h |} \| \tilde{w}
    \|_{0, \Omega^{\Gamma}_h} \leqslant Ch \| \tilde{w} \|_{2, \Omega_h}
  \end{multline*}
  
	Combining all the estimates above we arrive at
	$$
	  \|u - u_h -c\|_{0, \Omega}^2 \le C\sqrt{h}
		(\triple{\tilde{u} - I_h  \tilde{u}, y_{} - I_h y}_h + \| \tilde{f} - f \|_{0, \Omega_h^{\Gamma}}) 
		  \|\tilde w\|_{2, \Omega_h}
  $$
  We conclude recalling the bound on the triple norm of the error from  the proof of Theorem \ref{ThNeu:aprio}, (\ref{fftil}), and the regularity estimate $\|\tilde w\|_{2, \Omega_h} \le C\|u - u_h -c \|_{0, \Omega}$.
\end{proof}

\section{Extension to $P_k$ finite elements \red{and Robin boundary conditions}}
The methods presented above can be easily extended to $P_k$ finite elements giving optimal convergence of order $h^k$ in the $H^1$-norm, $k\ge 2$. We thus consider in this section the finite element space
\begin{equation}\label{Vhk}
  V_h^{(k)} = \{v_h \in H^1 (\Omega_h) : v_h |_T \in \mathbb{P}_k (T),
  \quad \forall T \in \mathcal{T}_h \}
\end{equation}
with $\mathbb{P}_k (T)$ representing the polynomials of degree $\le k$,
propose some modifications to be introduced to the methods above, first for Dirichlet boundary conditions and then for Neumann-Neumann ones, and outline the convergence proofs in both these cases. 

\subsection{Dirichlet boundary conditions}
The method (\ref{Dir:Ph})--(\ref{Dir:ah}) is modified as follows: \\
Find $u_h \in V_h^{(k)}$ s.t. 
\[ a_h (u_h, v_h) = \int_{\Omega_h} fv_h + \int_{\Gamma} g \frac{\partial
   v_h}{\partial n} + \frac{\gamma}{h}  \int_{\Gamma} gv_h 
	{- \sigma h^2\sum_{T \subset \Th^{\Gamma}} \int_T f \Delta v_h}
	,\quad \forall v_h \in V_h^{(k)}	
\]
with
\begin{align*}
  a_h (u, v) &= \int_{\Omega_h} \nabla u \cdot \nabla v - \int_{\Gamma_h}
  \frac{\partial u}{\partial n} v + \int_{\Gamma} u \frac{\partial v}{\partial
  n} + \frac{\gamma}{h}  \int_{\Gamma} uv \\
	&\quad {+ \sigma h^2 \sum_{T \subset\Th^{\Gamma}} \int_T (\Delta u) (\Delta v)}
  + \sigma \sum_{E \in \mathcal{F}_{\Gamma}} \sum_{j=1}^k
	h^{2j-1} \int_E \left[
  \frac{\partial^j u}{\partial n^j} \right] \left[ \frac{\partial^j v}{\partial n^j}
  \right]
\end{align*}
Note that the additional stabilization term with the product  $(\Delta u) (\Delta v)$ is strongly consistent since $-\Delta u=f$. Also note  that the ghost penalty term is extended to control the normal derivatives of all orders up to $k$, cf. \cite{burman3}.

Revisiting the theoretical analysis of Section 3 reveals the following:
\begin{itemize}
	\item Lemma \ref{LemDir:prop1} remains valid thanks to the extended ghost penalty. Indeed, the only thing to recheck in its proof is the following implication: if $\nabla v_h=0$ on $T_k$ and all the norms of the jumps contained in ghost penalty vanish, then $\nabla v_h=0$ on $\Pi_k^\Gamma$. This is true since the extended ghost penalty controls all the derivatives present in our finite element space.
	\item The first relation (\ref{LemDir:eq1}) in the proof of Lemma \ref{LemDir:coer} now becomes
	\[ 
	 \int_{\Gamma_h} \frac{\partial v_h}{\partial n} v_h - \int_{\Gamma} 
   \frac{\partial v_h}{\partial n} v_h = \cdots 
	 = \int_{B_h} |\nabla v_h |^2 + \sum_{T \in \Th^{\Gamma}} \int_{T\cap B_h} (\Delta v_h)v_h
	  - \sum_{F \in \mathcal{F}_{\Gamma}} \int_{F \cap B_h} v_h
   \left[ \frac{\partial v_h}{\partial n} \right] 
	\] 
since one can no longer assume $\Delta v_h=0$  on $T$. The new term with $\Delta v_h$ can be then controlled thanks to the additional $\Delta\cdot\Delta$  stabilization term in the bilinear form $a_h$ so that Lemma \ref{LemDir:coer} remains valid.    
  \item In Theorem \ref{ThDir:aprio}, we should now suppose $f \in H^{k-1} (\Omega_h)$, $g \in H^{k+1/2}
  (\Gamma)$ so that the solution $u$ to (\ref{Dir:P}) is in $H^{k+1} (\Omega)$. The proof of the Theorem can be then followed using appropriate Sobolev spaces and standard interpolation estimates to $P_k$ finite elements. The only non-trivial change in the proof concerns the estimate for $\|f - \tilde{f} \|_{0, \Omega_h^\Gamma}$, which should be changed to	
  \begin{equation}\label{fftilk}
     \|f - \tilde{f} \|_{0, \Omega_h^\Gamma} \le Ch^{k-1} |f - \tilde{f} |_{k-1, \Omega_h} \le Ch^{k-1} (|f|_{k-1, \Omega_h} +\| \tilde{u} \|_{k+1,    \Omega_h}) .
  \end{equation}
This can be proved by refining the argument of 	Lemma \ref{Prelim1}. We recall that $f = \tilde{f}$ on $\Omega$ and both $f$ and $\tilde{f}$are in $H^{k-1}(\Omega)$ so that all the derivatives up to order $k-2$ of $f$ and $\tilde{f}$ coincide on $\Gamma$. The bound (\ref{fftilk}) then follows as in Lemma \ref{Prelim1} employing a Taylor expansion with the integral remainder of order $k-1$.
		\end{itemize}
Keeping in mind the modifications above, we can easily establish the convergence of the $P_k$ version of our method: given $f \in \red{H^{k-1} (\Omega_h)}$ and $g \in \red{H^{k+1/2} (\Gamma)}$, and supposing $\sigma$ sufficiently big, one has
  \begin{equation*}
    |u - u_h |_{1, \Omega} + \frac{1}{\sqrt{h}}  \|u - u_h \|_{0, \Gamma} \le
    Ch^k (\|f\|_{k-1, \Omega_h} +\|g\|_{k+1/2, \Gamma}) . 
  \end{equation*}
The $L^2$ error estimate of order $h^{k+1/2}$ can be also proved as in Theorem \ref{ThDir:L2}.
		
\subsection{Neumann boundary conditions}
We turn now to Problem (\ref{Neu:P}). The goal is to extend the method (\ref{Neu:Ph})--(\ref{Rob:ah}) to $P_k$ finite elements. We thus recall the space $V_h^{(k)}$ as in (\ref{Vhk}), introduce its subspace $\tilde{V}_h^{(k)}$ as in (\ref{Vhtild}) and the auxiliary finite element space
\begin{eqnarray*}
  Z_h^{(k)} = \{z_h \in H^1 (\Omega^{\Gamma}_h)^d : z_h |_T \in \mathbb{P}_k (T)^d,
  \quad  \forall T \in \mathcal{T}_h^{\Gamma} \}
\end{eqnarray*}
Our finite element problem is: Find $u_h \in \tilde{V}_h^{(k)}$, $y_h \in Z_h^{(k)}$ solving
(\ref{Neu:Ph}) with the bilinear form $a^N_h$ modified as follows
\begin{align*}
  a^N_h (u, y ; v, z) &= \int_{\Omega_h} \nabla u \cdot \nabla v +
  \int_{\Gamma_h} y \cdot nv - \int_{\Gamma} y \cdot nv 
	+ \gamma_{div}\int_{\Omega_h^{\Gamma}} \Div y \Div z \\
   & + \gamma_1 \int_{\Omega_h^{\Gamma}} (y + \nabla u) \cdot (z + \nabla  v) 
	+ \sigma \sum_{j=1}^k	h^{2j-1} \int_{\Gamma_h^i} \left[
  \frac{\partial^j u}{\partial n^j} \right] \left[ \frac{\partial^j v}{\partial n^j} \right]
	+ \sigma \sum_{E \in \mathcal{F}^{cut}_{\Gamma}} \sum_{j=1}^{k-1}	h^{2j+1}  \left[
  \frac{\partial^j y}{\partial n^j} \right] \cdot \left[ \frac{\partial^j z}{\partial n^j} \right]
\end{align*}
We have added here extra penalization terms controlling the jumps of the higher normal derivatives of $u$ on $\Gamma_h^i$ and an additional ghost stabilization term controlling the jumps of derivatives of $y$ on the cut facets, denoted by $\mathcal{F}^{cut}_{\Gamma}$ ($\mathcal{F}^{cut}_{\Gamma}$ is thus the collection of interior facets of mesh $\Th^{\Gamma}$; $\mathcal{F}^{cut}_{\Gamma}\subset\mathcal{F}_{\Gamma}$). 

Keeping in mind the modifications above, we can easily establish the convergence of the $P_k$ version of our method: given $f \in H^k (\Omega_h)$ and $g \in H^{k+1 / 2} (\Gamma)$, ans supposing $\gamma_{div},\gamma_{1}$ sufficiently big, one has
  \begin{equation*}
    |u - u_h |_{1, \Omega} \le Ch^k (\|f\|_{k, \Omega_h} +\|g\|_{k+1/2, \Gamma}) . 
  \end{equation*}
The $L^2$ error estimate of order $h^{k+1/2}$ can be also proved as in Theorem \ref{ThDir:L2}. Note that we still need to assume some extra regularity: $f \in H^k (\Omega_h)$ contrary to $f \in H^{k-1} (\Omega_h)$ in the case of Dirichlet boundary conditions, or to what whould be necessary for the optimal convergence of the standard finite element method.

\color{black}
\subsection{Robin boundary conditions}
We can also consider the problem
\begin{equation}
  - \Delta u = f \text{ in } \Omega, \qquad u+\kappa\frac{\partial u}{\partial n}= g
  \text{ on } \Gamma \label{Rob:P}
\end{equation}
with $\kappa>0$. It is straightforward to adapt the method of the Neumann case (\ref{Neu:Ph})--(\ref{Rob:ah}) to the present Robin case. To this end, we rewrite the boundary conditions in (\ref{Rob:P}) as $\frac{1}{\kappa}u+\frac{\partial u}{\partial n}= \frac{1}{\kappa}g$. By the same considerations as in the Neumann case we arrive then at the following method for (\ref{Rob:P}) using $P_k$ finite elements (without the constraint of zero integral over $\Omega_h$): 
Find $u_h \in V_h^{(k)}$, $y_h \in Z_h^{(k)}$ such that
\begin{align}\label{Robin1}
    \int_{\Omega_h} \nabla u_h \cdot \nabla v_h &+
  \int_{\Gamma_h} y_h \cdot nv - \int_{\Gamma} y_h \cdot nv + \int_{\Gamma} \frac{1}{\kappa} u_h v_h 
	+ \gamma_{div}\int_{\Omega_h^{\Gamma}} \Div y_h \Div z_h \\
	\notag
   & + \gamma_1 \int_{\Omega_h^{\Gamma}} (y_h + \nabla u_h) \cdot (z_h + \nabla  v_h) 
	+ \sigma \sum_{j=1}^k	h^{2j-1} \int_{\Gamma_h	^i} \left[
  \frac{\partial^j u_h}{\partial n^j} \right] \left[ \frac{\partial^j v_h}{\partial n^j} \right]
	+ \sigma \sum_{E \in \mathcal{F}^{cut}_{\Gamma}} \sum_{j=1}^{k-1}	h^{2j+1}  \left[
  \frac{\partial^j y_h}{\partial n^j} \right] \cdot \left[ \frac{\partial^j z_h}{\partial n^j} \right] \\
	\notag
	&= \int_{\Omega_h} fv_h +
  \int_{\Gamma} \frac{1}{\kappa}gv_h + \gamma_{div} \int_{\Omega_h^{\Gamma}} f \Div z_h 
	\hspace{20mm}
  \forall (v_h, z_h) \in {V}_h^{(k)} \times Z_h^{(k)}
\end{align}
The proofs above can be easily adapted to show the optimal convergence of this scheme both in $H^1(\Omega)$ and $L^2(\Omega)$ norms. 

Alternatively, one can adapt the method of the Dirichlet case (\ref{Dir:Ph})--(\ref{Dir:ah}) mimicking the approach of \cite{juntunen09}. In the context of unfitted meshes ($\Gamma\not=\Gamma_h$), choosing the antisymmetric variant of the Nitsche method and adding the ghost penalty, this would give the scheme\footnote{This approach has been suggested by an anonymous reviewer.}:  
Find $u_h \in V_h^{(k)}$ such that
\begin{align}\label{Robin2}
   \int_{\Omega_h} \nabla u_h \cdot \nabla v_h &- \int_{\Gamma_h} \frac{\partial
     u_h}{\partial n} v_h + \int_{\Gamma} \left( u_h + \kappa \frac{\partial
     u_h}{\partial n} \right) \frac{\partial v_h}{\partial n} + \frac{1}{\kappa +
     h / \gamma} \int_{\Gamma} \left( u_h + \kappa \frac{\partial u_h}{\partial n}
     \right) \left( v_h + \kappa \frac{\partial v_h}{\partial n} \right) \\
	\notag
	&\quad {+ \sigma h^2 \sum_{T \subset\Th^{\Gamma}} \int_T (\Delta u_h) (\Delta v_h)}
  + \sigma \sum_{E \in \mathcal{F}_{\Gamma}} \sum_{j=1}^k
	h^{2j-1} \int_E \left[
  \frac{\partial^j u_h}{\partial n^j} \right] \left[ \frac{\partial^j v_h}{\partial n^j}
  \right] \\
		\notag
   &= \int_{\Omega_h} fv + \int_{\Gamma} g \frac{\partial v_h}{\partial n} +
     \frac{1}{\kappa + h / \gamma} \int_{\Gamma} g \left( v_h + \kappa
     \frac{\partial v_h}{\partial n} \right)
	\hspace{20mm}
  \forall v_h \in {V}_h^{(k)}
\end{align}
The advantage of (\ref{Robin2}) over (\ref{Robin1}) is in the absence of the additional variable $y_h$. However, the analysis of this scheme is yet to be done and is out of the scope of the present paper.

\color{black}

\section{Numerical experiments.} We shall illustrate our methods (\ref{Dir:Ph})--(\ref{Dir:ah}) and (\ref{Neu:Ph})--(\ref{Rob:ah}) by numerical experiments in a 2D domain $\Omega$ defined by a level-set function $\varphi$:
\begin{equation}
\Omega=\{(x,y):\varphi(x,y)<0\}\text{ with } \varphi:=r^4(5+3\sin(7\theta+7\pi/36))/2-R^4
\label{OmegaPhi}
\end{equation}
where $(r,\theta)$ are the polar coordinates $r=\sqrt{x^2+y^2}$, $\theta=\arctan\frac{y}{x}$, and $R=0.47$ (this example is taken from \cite{HaslingerRenard}). To construct the computational mesh, we embed $\Omega$ into the square $\mathcal{O}=(-0.5,0.5)^2$, introduce a regular $N\times N$ criss-cross mesh on $\mathcal{O}$, and drop the triangles outside $\Omega$ to produce $\Th$ and $\Th^\Gamma$, as illustrated at Fig.~\ref{Dir:figdom} in the case $N=16$. In some of our experiments, the domain $\Omega$ will be rotated by an angle $\theta_0$ counter-clockwise around the origin. This is achived by redefining   $\theta$ as $\theta=\arctan\frac{y}{x}-\theta_0$.
 
All the computations are done in FreeFEM \cite{freefem} taking advantage of its level-set capabilities (the key word \verb?levelset? in numerical integration commands \verb?int1d? and \verb?int2d? to deal with the integrals over $\Gamma$ and over $\Omega$, cf. Subsection \ref{CompCutFEM}). Note that the numerical integration on $\Gamma$ introduces an additional error which is not covered by our theoretical analysis: $\Gamma$ is in fact approximated by a sequence of straight segments with the endpoints obtained by approximate intersections of $\Gamma$ with the edges of $\Th$. This error is of order $h^2$ and thus can be assumed negligible in the case of $P_1$ finite elements (which is the only case studied numerically in this paper). We presume that a subtler approximation should be employed when dealing with higher order finite elements, as in \cite{burmanBVC}.

\subsection{Dirichlet boundary conditions}
We start by solving numerically the Dirichlet problem (\ref{Dir:P}) in domain (\ref{OmegaPhi}) with zero right-hand side $f=0$ and a non-homogeneous boundary condition $g$ set so that the exact solution is given by 
\begin{equation}
u=\sin(x) e^y
\label{exact}
\end{equation} 
We employ the method (\ref{Dir:Ph})--(\ref{Dir:ah}) taking the following parameter values: $\gamma=1$, $\sigma=0.01$. Fig.~\ref{Dir:figN16} represents the solution obtained on a $16\times 16$ mesh. We observe that the numerical method captures well the exact solution, the perturbation being essentially concentrated in the narrow fictitious domain $B_h=\Omega_h\setminus\Omega$. Fig.~\ref{Dir:convh} reports the evolution of the error under the mesh refinement (always using the regular criss-cross meshes as mentioned above) and confirms the optimal convergence order of the method in both $H^1$ and $L^2$ norms.
\begin{figure}[htp]
\centerline{
\includegraphics[width=0.5\textwidth]{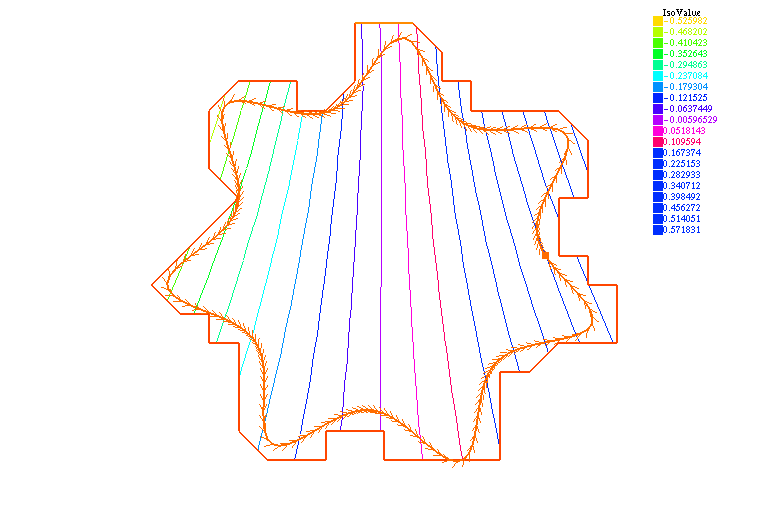}
\includegraphics[width=0.5\textwidth]{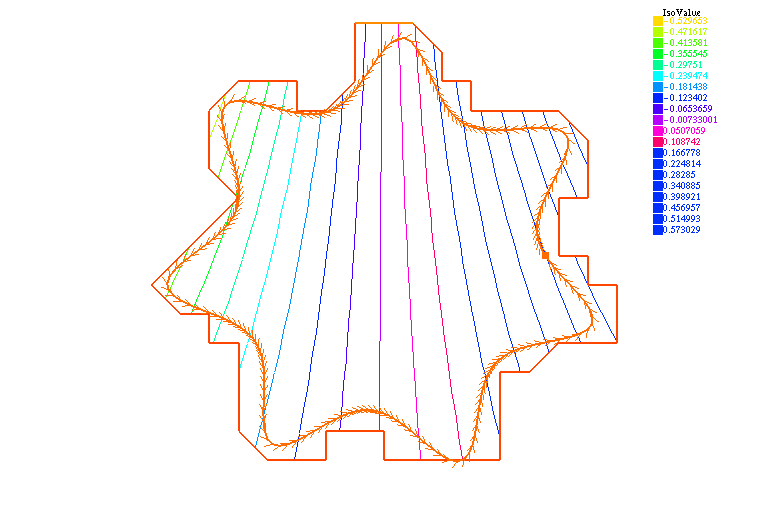}
}
\vspace{-5mm}
\centerline{
\hspace{-18mm}
\includegraphics[width=0.5\textwidth]{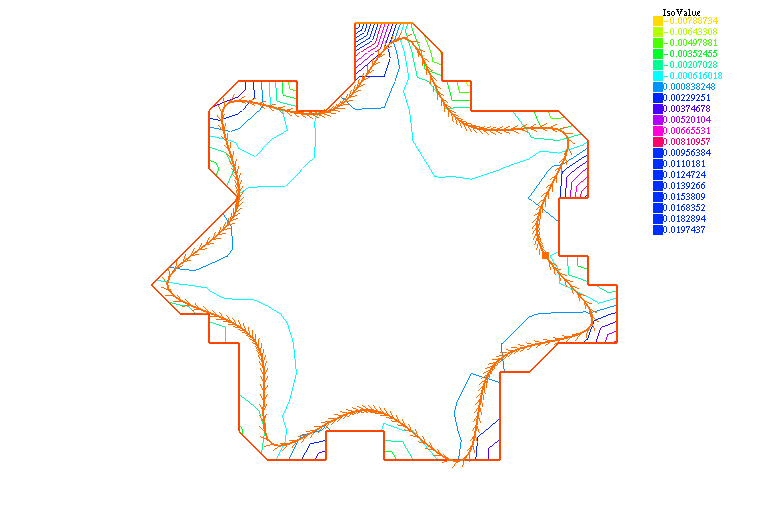}
}
\caption{Poisson-Dirichlet problem (\ref{Dir:P}) with the exact solution (\ref{exact}) on domain (\ref{OmegaPhi}). Top-Left: the numerical solution $u_h$ on the $16\times 16$ mesh as in Fig.~\ref{Dir:figdom} produced by method (\ref{Dir:Ph})--(\ref{Dir:ah}); Top-Right: the exact solution $u$; Bottom: the error $u-u_h$.}
\label{Dir:figN16}
\end{figure}
\begin{figure}[htp]
\centerline{
\includegraphics[width=0.4\textwidth]{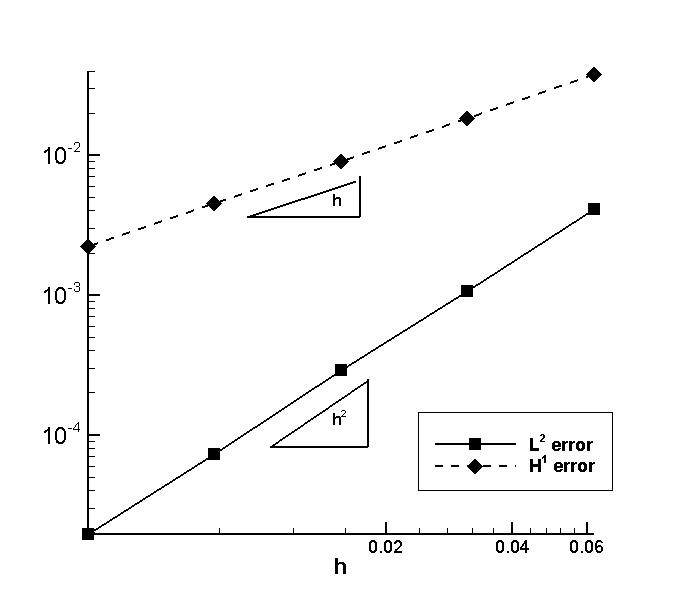}
}
\caption{Poisson-Dirichlet problem as in Fig.~\ref{Dir:figN16}: the relative errors in $L^2(\Omega)$ and $H^1(\Omega)$ norms as functions of 
$h$ under the mesh refinement.}
\label{Dir:convh}
\end{figure}
\begin{figure}[htp]
\centerline{
\includegraphics[width=0.4\textwidth]{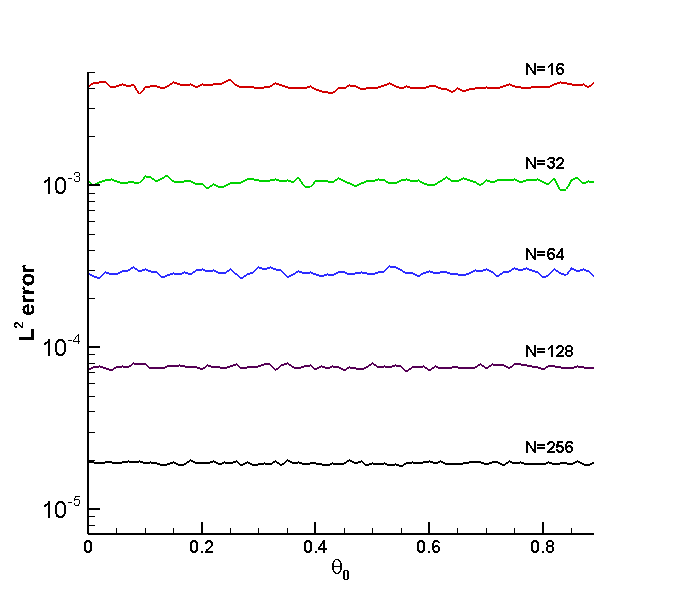}
\quad
\includegraphics[width=0.4\textwidth]{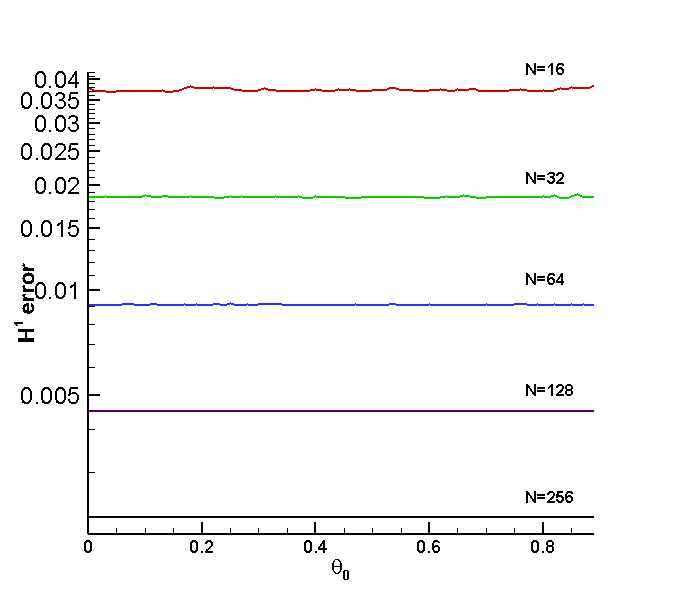}
}
\caption{Poisson-Dirichlet problem (\ref{Dir:P}) on the domain as in Figs. \ref{Dir:figdom} and \ref{Dir:figN16} rotated by an angle $\theta_0$ around the origin: the relative errors in $L^2(\Omega)$ and $H^1(\Omega)$ norms as 
functions of the rotation angle $\theta_0$.}
\label{Dir:figres}
\end{figure}
\begin{figure}[htp]
\centerline{
\includegraphics[width=0.4\textwidth]{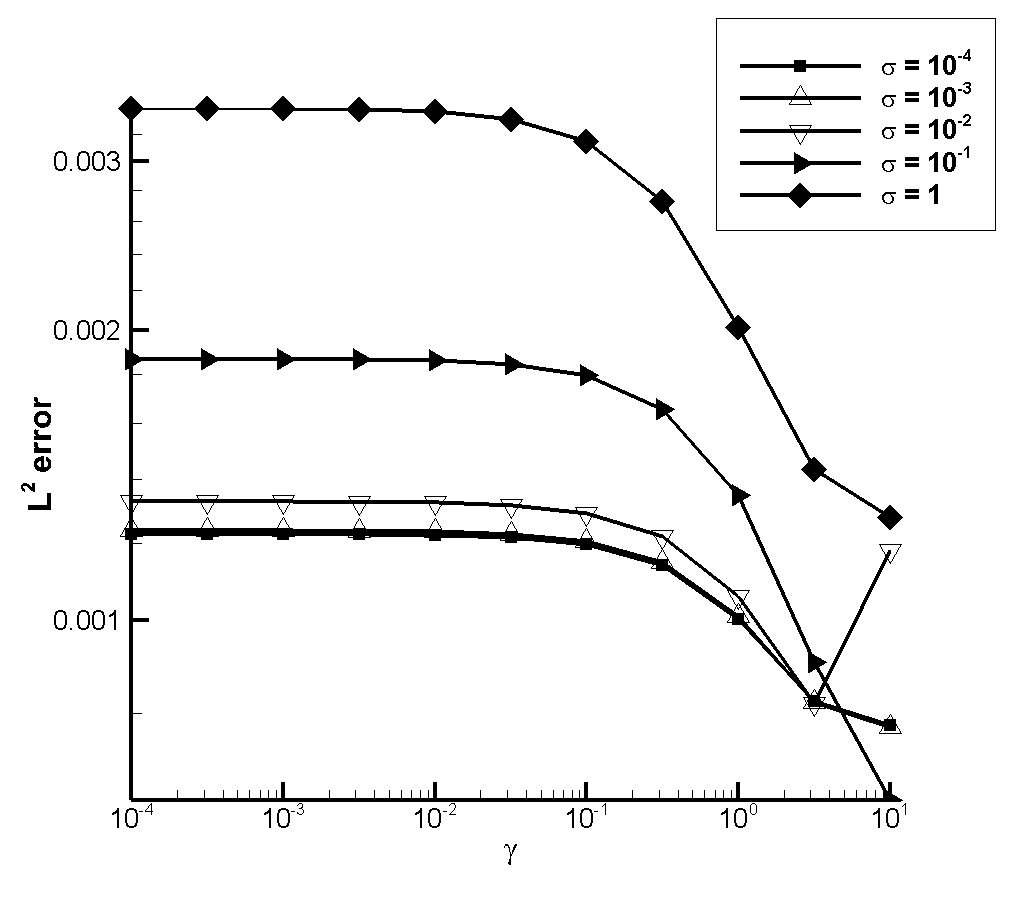}
\quad
\includegraphics[width=0.4\textwidth]{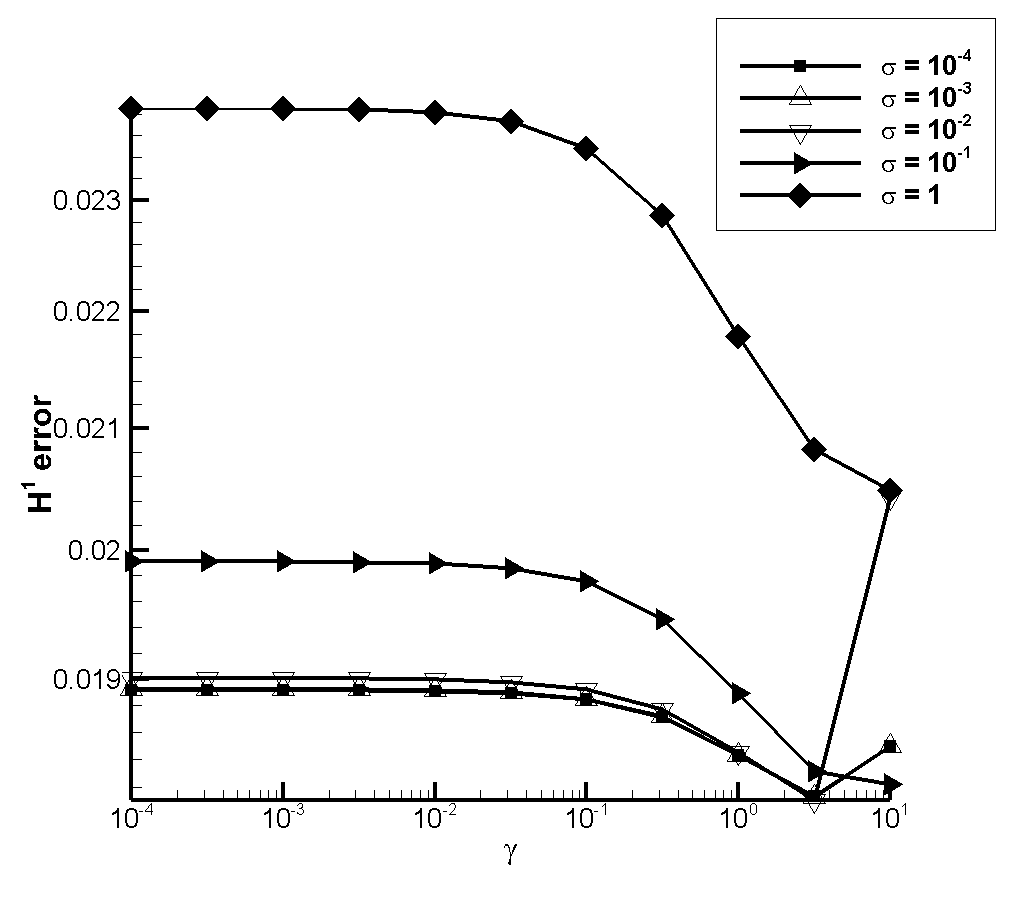}
}
\caption{Poisson-Dirichlet problem (\ref{Dir:P}) as in Fig.~\ref{Dir:figN16} but on the $32\times 32$ mesh. The relative errors in $L^2(\Omega)$ and $H^1(\Omega)$ norms as functions of parameters $\gamma$ and $\sigma$.}
\label{Dir:params}
\end{figure}

In order to explore the robustness of the method with respect to the placement of the physical domain on the computational mesh, we now redo the calculations above, rotating $\Omega$ by a series of angles $\theta_0$ ranging from $0$ to $\frac{2\pi}{7}$ as described in the preamble of this Section. For each rotation angle $\theta_0$, the boundary $\Gamma$ cuts the triangles of the background mesh in a different manner, creating sometimes the ``dangerous" situations when certain mesh triangles of $\Th$ have only a tiny portion inside the physical domain $\Omega$. Fig.~\ref{Dir:figres} presents the errors in $L^2(\Omega)$ and $H^1(\Omega)$ norms as functions of $\theta_0$ at different discretization levels. We observe that the errors do not vary much from one position to another, especially when measured in the $H^1(\Omega)$ norm. Moreover, this variability decreases as the meshes are refined.

Finally, we explore the influence of the parameters $\gamma$ and $\sigma$ in (\ref{Dir:ah})  on the precision of the method. The error norms on a fixed $32\times 32$ mesh for various choices of $\gamma$ and $\sigma$  are presented at Fig.~\ref{Dir:params}.	We observe that the method is not very sensitive to the parameters, especially when the error is measured in the $H^1$ norm. The accuracy does not seem to deteriorate catastrophically even in the limit $\gamma,\sigma\to 0$. This is somewhat surprising in view of the theoretical analysis of Section \ref{theory} and may indicate that a subtler theory could reveal the optimal convergence properties of the method (\ref{Dir:Ph})--(\ref{Dir:ah}) without stabilization. In our numerical experiments above, we have preferred however to remain on a safer side and have chosen a rather large value of the Nitsche parameter $\gamma=1$ while keeping the ghost stabilization parameter $\sigma=10^{-2}$ small.  Note that larger values of $\gamma$ seem to make the method more sensible and unpredictive with respect to the choice of $\sigma$. On the other hand, the error is clearly monotonically increasing with $\sigma$ in the regime $\gamma\le 1$. 

\subsection{Comparisons with CutFEM}\label{CompCutFEM}

\begin{figure}[htp]
\begin{subfigure}[t]{\textwidth}
\centering
\includegraphics[width=0.4\textwidth]{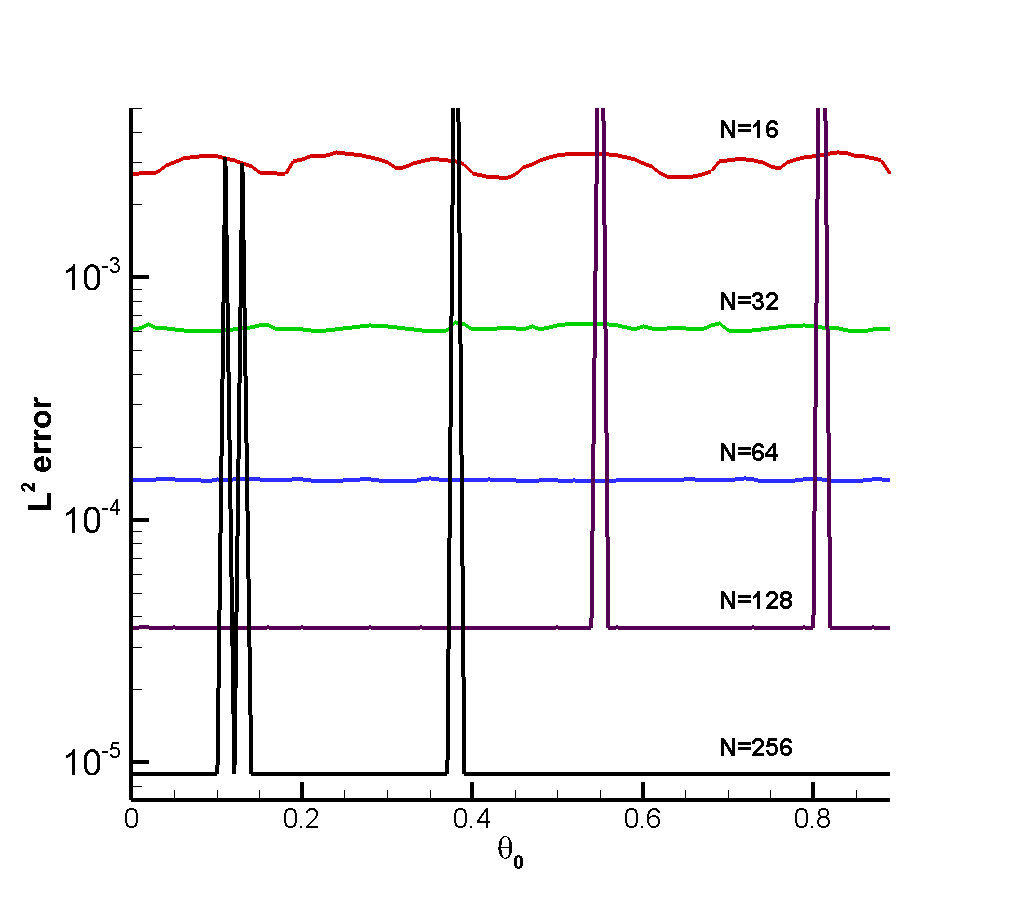}
\quad
\includegraphics[width=0.4\textwidth]{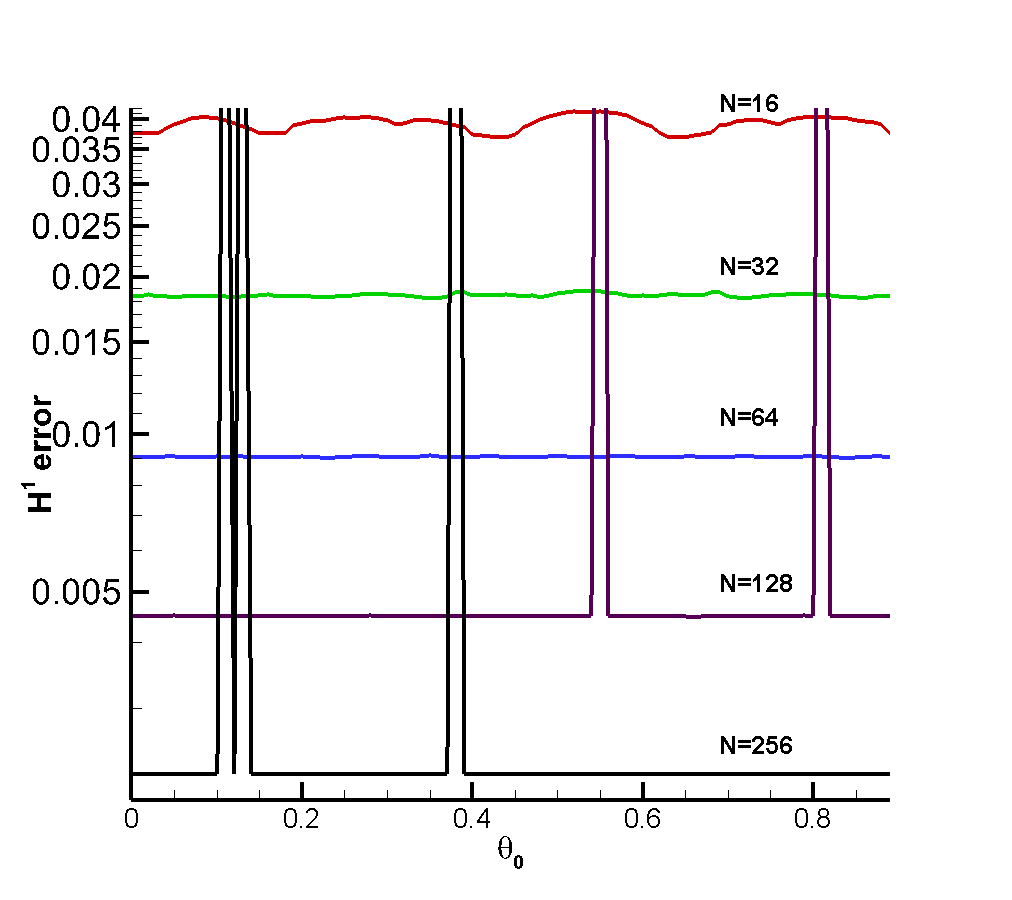}
\caption{CutFEM with Lagrange multipliers (\ref{CutLam}) with $\sigma=0.01$}
\end{subfigure}

\begin{subfigure}[t]{\textwidth}
\centering
\includegraphics[width=0.4\textwidth]{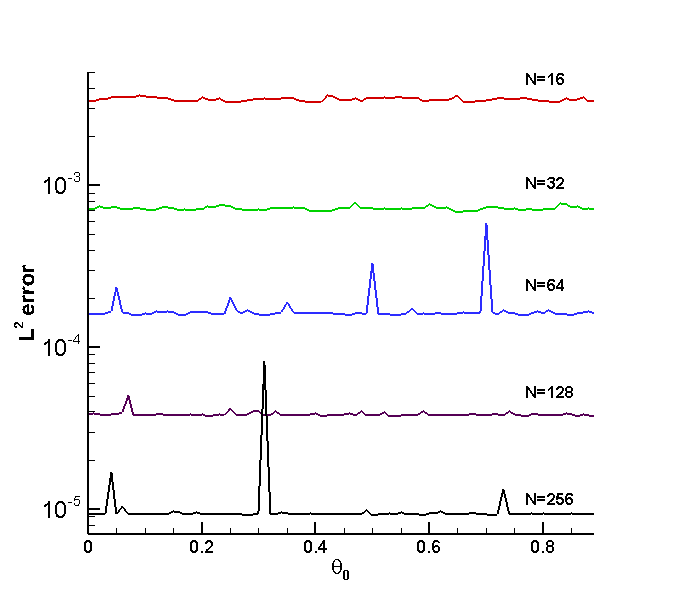}
\quad
\includegraphics[width=0.4\textwidth]{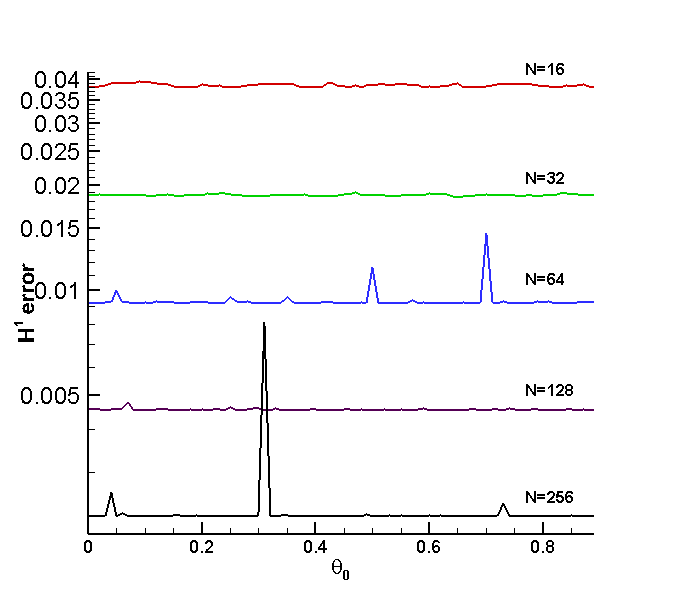}
\caption{Symmetric Nitsche CutFEM (\ref{CutSym}) with $\gamma=5$, $\sigma=0.1$}
\end{subfigure}

\begin{subfigure}[t]{\textwidth}
\centering
\includegraphics[width=0.4\textwidth]{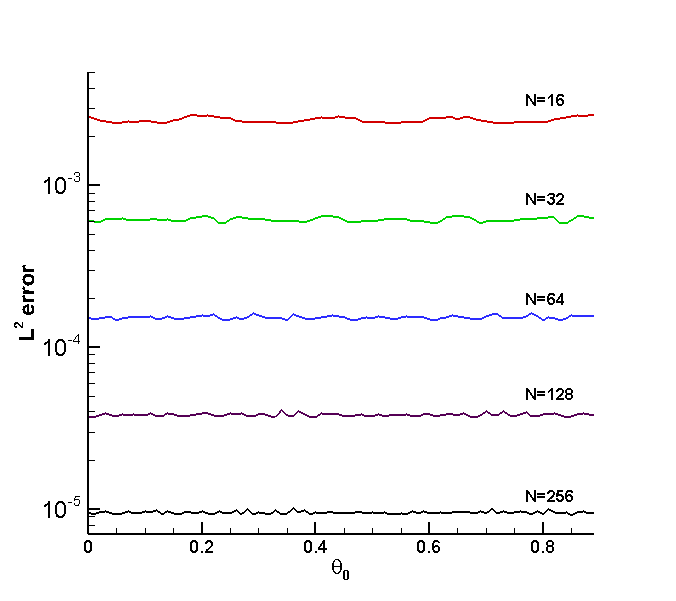}
\quad
\includegraphics[width=0.4\textwidth]{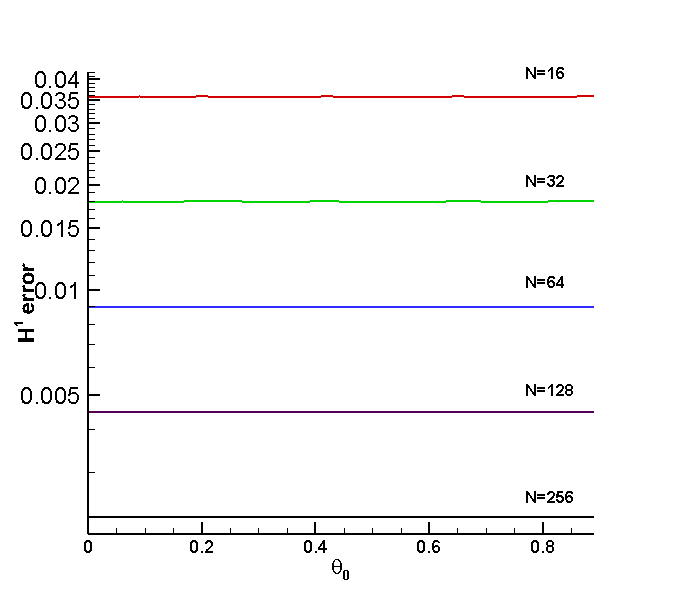}
\caption{Antisymmetric Nitsche CutFEM (\ref{CutAsym}) with $\gamma=1$, $\sigma=0.01$}
\end{subfigure}

\caption{Poisson-Dirichlet problem with CutFEM. The relative errors in $L^2(\Omega)$ and $H^1(\Omega)$ norms as 
functions of the rotation angle $\theta_0$, cf. Fig.~\ref{Dir:figres}.}
\label{Dir:cutres}
\end{figure}

As already mentioned in Introduction, our method (\ref{Dir:Ph})--(\ref{Dir:ah}) is very close to CutFEM methods, the essential difference being that we avoid the integration over $\Omega$, i.e. the numerical integration over the cut mesh elements. Although such an integration is in principle difficult to implement (in particular, it is more difficult than the integration over the segments of the boundary $\Gamma$), it is already available in FreeFEM, so that we can easily compare numerically the performance of CutFEM with that of our method.

We have considered the following variants of CutFEM with $V_h$ denoting everywhere the $P_1$ finite elements space on the mesh $\Th$, as in (\ref{Dir:Vh}). 

\begin{description}
\item[a)] A version with Lagrange multipliers approximated by $P_0$ finite elements on the cut triangles $\Th^\Gamma$ \cite{burman1}:\\
		Find $u_h \in V_h$, $\lambda_h \in W_h:= \{\mu_h \in L^2 (\Omega_h^\Gamma) : \mu_h |_T \in \mathbb{P}_0 (T)
  \  \forall T \in \mathcal{T}_h^\Gamma \}$ s.t.
\begin{align} \int_{{\Omega}} \nabla u_h \cdot \nabla v_h + \int_{\Gamma} \lambda_h v_h &=
   \int_{{\Omega}} fv_h &\quad &\forall v_h \in V_h 
	 \label{CutLam}\\
  \int_{\Gamma} \mu_h u_h - \sigma h \sum_{E \in \mathcal{E}_h^{\Gamma}}
   \int_{\Gamma} [\lambda_h] [\mu_h] &= \int_{\Gamma} g \mu_h &\quad &\forall
   \mu_h \in W_h
	\notag
\end{align}	
	\item[b)] A version based on the symmetric Nitsche method \cite{burman2}:\\
		Find $u_h \in V_h$ s.t.
\begin{align}
  \int_{\Omega} \nabla u_h \cdot \nabla v_h - &\int_{\Gamma}
  \frac{\partial u_h}{\partial n} v_h 
	- \int_{\Gamma} u_h \frac{\partial v_h}{\partial
  n} + \frac{\gamma}{h}  \int_{\Gamma} u_hv_h 
	+ {\sigma h \sum_{E \in\mathcal{F}_{\Gamma}} \int_E \left[ \frac{\partial u_h}{\partial n} \right] 
	  \left[ \frac{\partial v_h}{\partial n} \right]} &&
		\label{CutSym}\\
	= &\int_{\Omega_h} fv_h - \int_{\Gamma} g \frac{\partial
  v_h}{\partial n} + \frac{\gamma}{h}  \int_{\Gamma} gv_h \quad 
	&\quad&\forall v_h\in V_h
	\notag
\end{align}
	\item[c)]  A version based on the antisymmetric Nitsche method (similar to the above, only the sign in front of the third term changes) \cite{burmanghost}:\\
		Find $u_h \in V_h$ s.t.
\begin{align}
  \int_{\Omega} \nabla u_h \cdot \nabla v_h - &\int_{\Gamma}
  \frac{\partial u_h}{\partial n} v_h 
	+ \int_{\Gamma} u_h \frac{\partial v_h}{\partial
  n} + \frac{\gamma}{h}  \int_{\Gamma} u_hv_h 
	+ {\sigma h \sum_{E \in\mathcal{F}_{\Gamma}} \int_E \left[ \frac{\partial u_h}{\partial n} \right] 
	  \left[ \frac{\partial v_h}{\partial n} \right]} &&
		\label{CutAsym}\\
	= &\int_{\Omega_h} fv_h + \int_{\Gamma} g \frac{\partial
  v_h}{\partial n} + \frac{\gamma}{h}  \int_{\Gamma} gv_h \quad 
	&\quad&\forall v_h\in V_h
	\notag
\end{align}
\end{description}

The results are presented at Fig.~\ref{Dir:cutres}. We consider there the same setup as in Fig.~\ref{Dir:figres}, i.e. domain $\Omega$ given by (\ref{OmegaPhi}) rotated around the origin at a series of angles $\theta_0$, the exact solution given by (\ref{exact}). The stabilization parameters $\gamma$ and $\sigma$ are given in the captions (we have taken the same parameters for the antisymmetric version as for our method in Fig.~\ref{Dir:figres}, while a larger value for $\gamma$ was necessary for the symmetric version). Comparing the results in Figs. \ref{Dir:figres} and \ref{Dir:cutres}, we observe that all the methods have overall almost the same accuracy in the $H^1(\Omega)$ norm, but CutFEM produces better results (gaining by a factor around 2) when the error is measured in the $L^2(\Omega)$ norm. However, the performance of CutFEM (with the exception of the antisymmetric Nitsche variant) can drastically degrade at certain position of the domain $\Omega$ with respect to the mesh (the spikes of the error on the graphs as functions of the rotation angle $\theta_0$). This is certainly an implementation issue, which can be conceivably explained by inaccuracies in the numerical integration over the cut triangles and/or by an incorrect determination of such triangles due to the round-off errors. We do not attempt here to investigate this issue further, and merely note that the absence of integration over the cut triangles in our method (\ref{Dir:Ph})--(\ref{Dir:ah}) permits us to avoid some delicate implementation issues.

\subsection{Neumann boundary conditions}
We now turn to the Poisson-Neumann problem (\ref{Neu:P}). Our test case is similar to that used before: the domain is given by (\ref{OmegaPhi}) and the exact solution by (\ref{exact}). The right-hand side in (\ref{Neu:P}) is thus $f=0$ and the boundary condition $g$ is 
set up as $g=n\cdot\nabla u$ with the normal $n$ defined via the levelset function $\varphi$ in (\ref{OmegaPhi}) as $n=\frac{\nabla\varphi}{|\nabla\varphi|}$.  
We employ the method (\ref{Neu:Ph})--(\ref{Rob:ah}) taking the following parameter values: $\gamma_{div}=1$, $\gamma_{1}=10$, $\sigma=0.01$. The constraint $\int_\Omega u_h=0$ is enforced with the help of a Lagrange multiplier, thus increasing the size of the system matrix by 1. A convergence study under the mesh refinement is reported at Fig.~\ref{Neu:convh}. It confirms the optimal convergence order of the method in both $H^1$ and $L^2$ norms.

\begin{figure}[htp]
\centerline{
\includegraphics[width=0.4\textwidth]{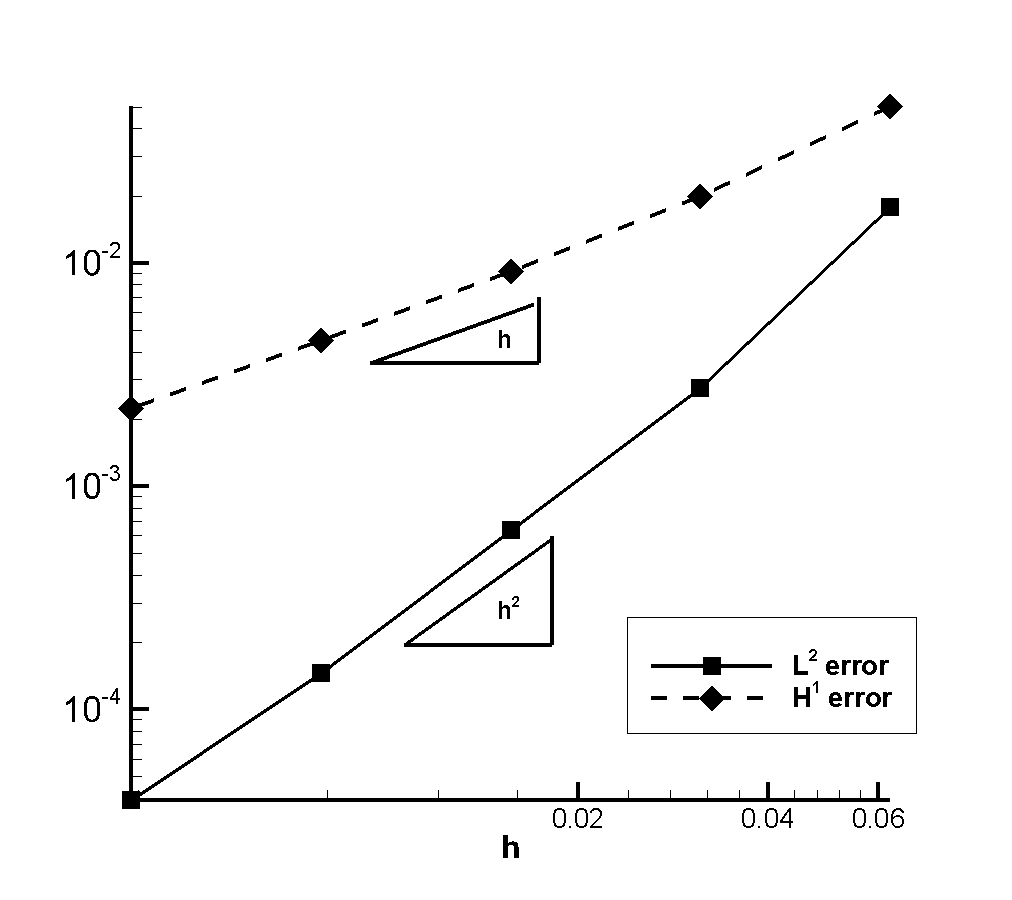}
}
\caption{Poisson-Neumann problem on domain and meshes as in Figs.~\ref{Dir:figdom}, \ref{Dir:figN16}: the relative errors in $L^2(\Omega)$ and $H^1(\Omega)$ norms as functions of $h$ under the mesh refinement.}
\label{Neu:convh}
\end{figure}

\begin{figure}[htp]
\centerline{
\includegraphics[width=0.4\textwidth]{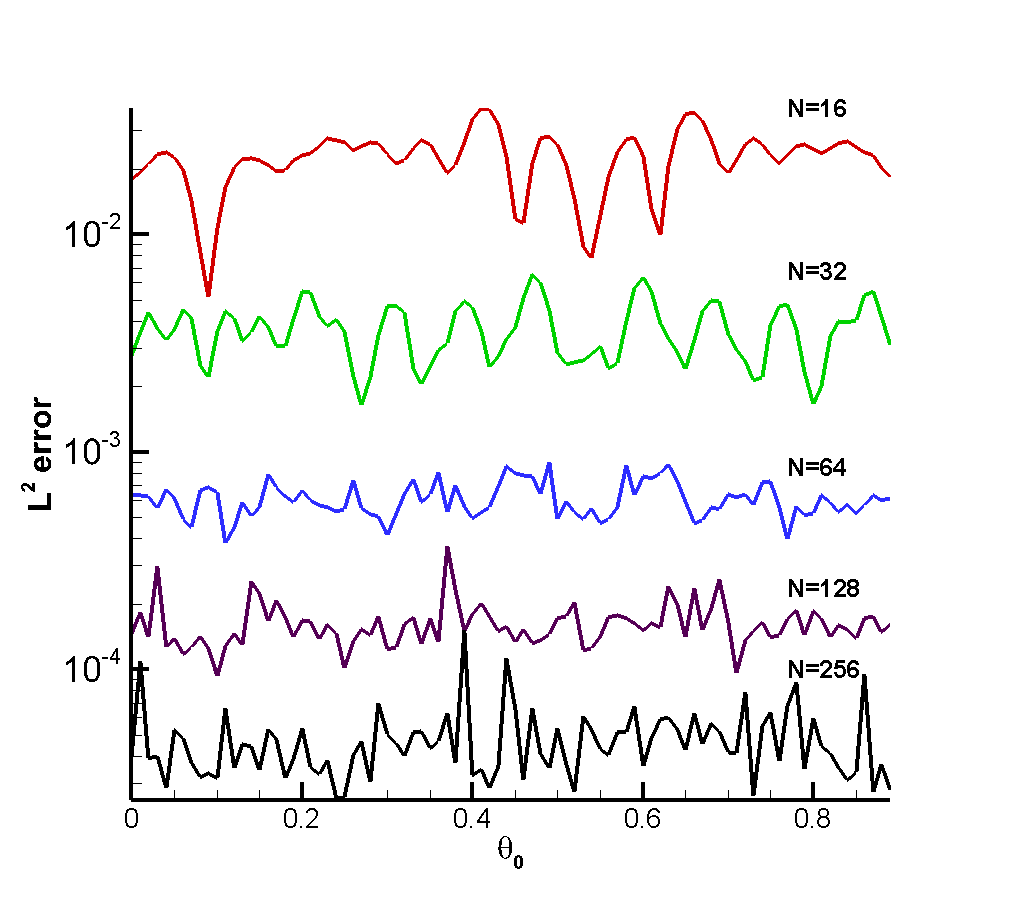}
\quad
\includegraphics[width=0.4\textwidth]{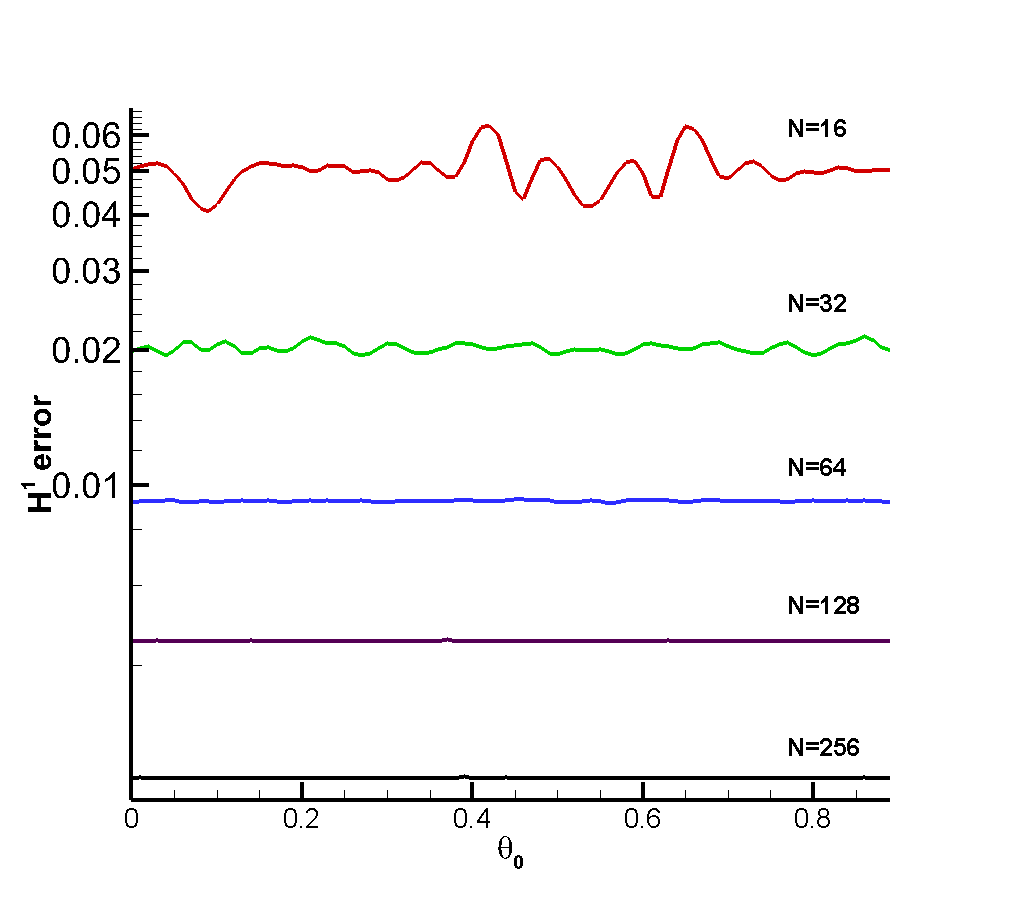}
}
\caption{Poisson-Neumann problem (\ref{Neu:P}) in the domain as in Fig. \ref{Dir:figdom} rotated by an angle $\theta_0$ around the origin: the relative errors in $L^2(\Omega)$ and $H^1(\Omega)$ norms as 
functions of the rotation angle $\theta_0$. The grad-div stabilization parameter set to $\gamma_{div}=1$. }
\label{Neu:figres}
\end{figure}

\begin{figure}[htp]
\centerline{
\includegraphics[width=0.4\textwidth]{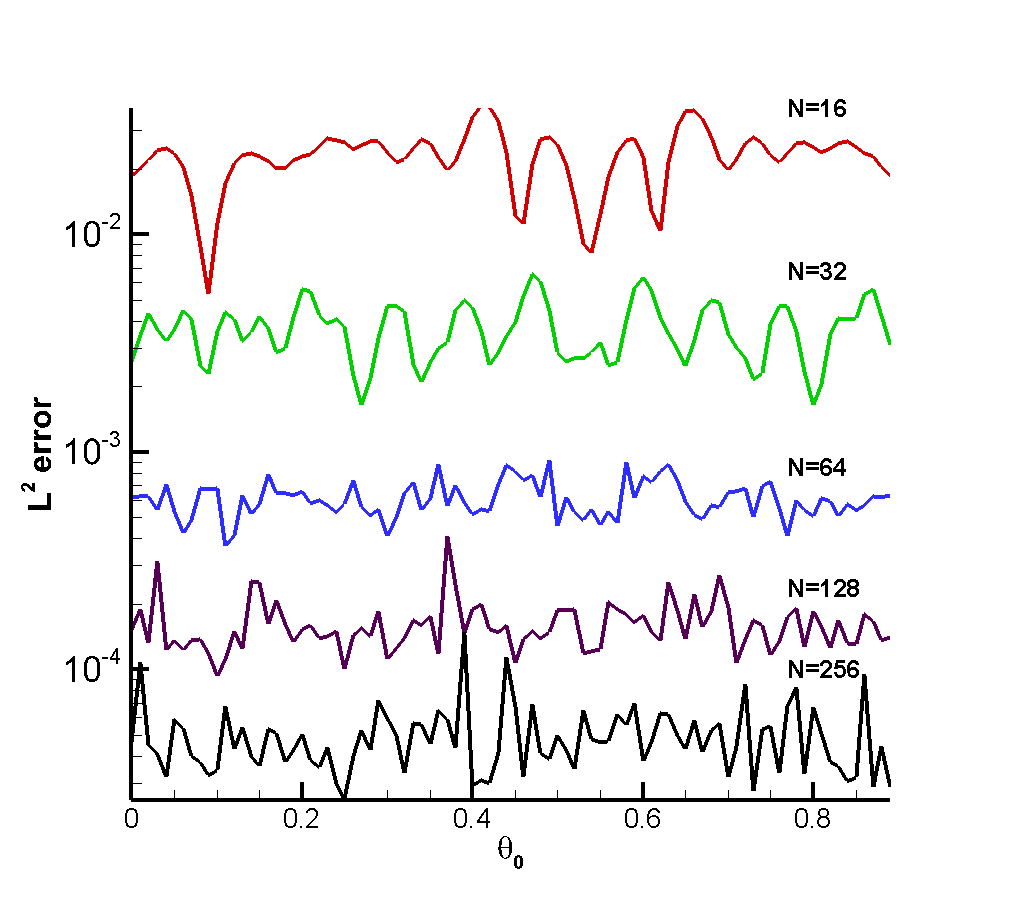}
\quad
\includegraphics[width=0.4\textwidth]{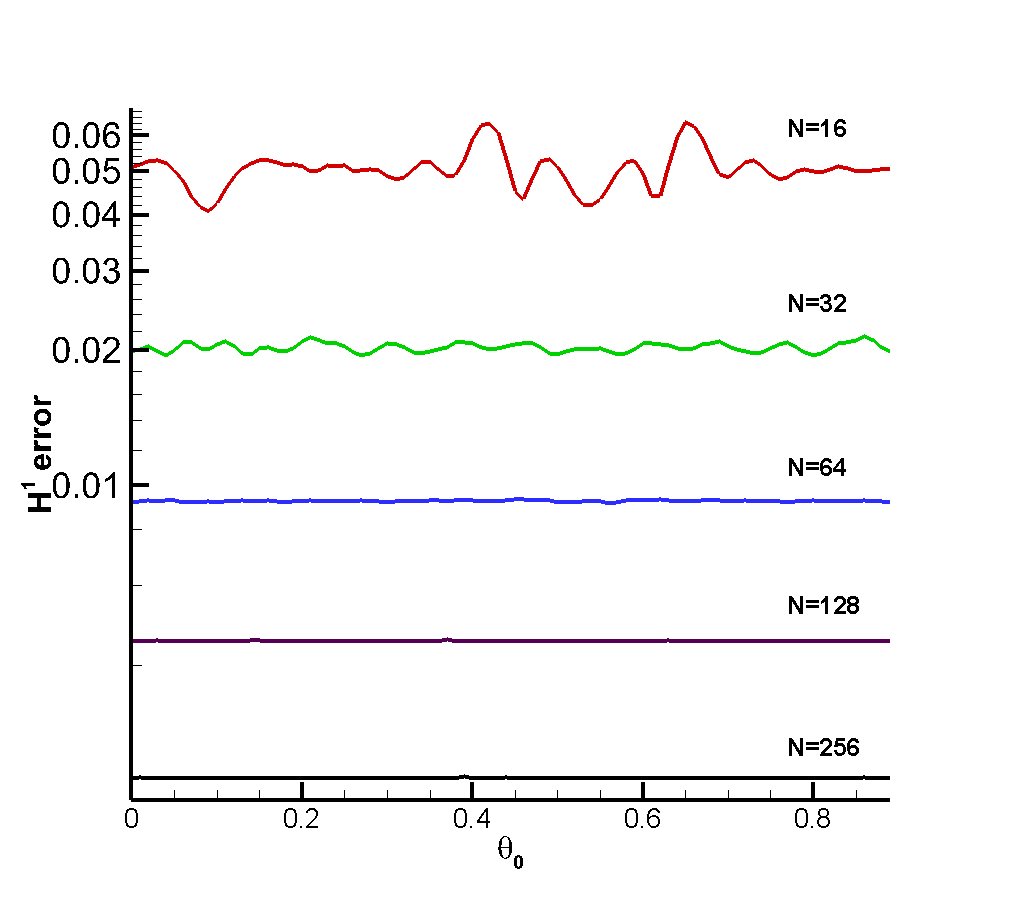}
}
\caption{Same setting as in Fig. \ref{Neu:figres} with the exception of the grad-div stabilization parameter which is chosen here as $\gamma_{div}=10h^2$. }
\label{Neu:figres_divh2}
\end{figure}

\begin{figure}[htp]
\centerline{
\includegraphics[width=0.4\textwidth]{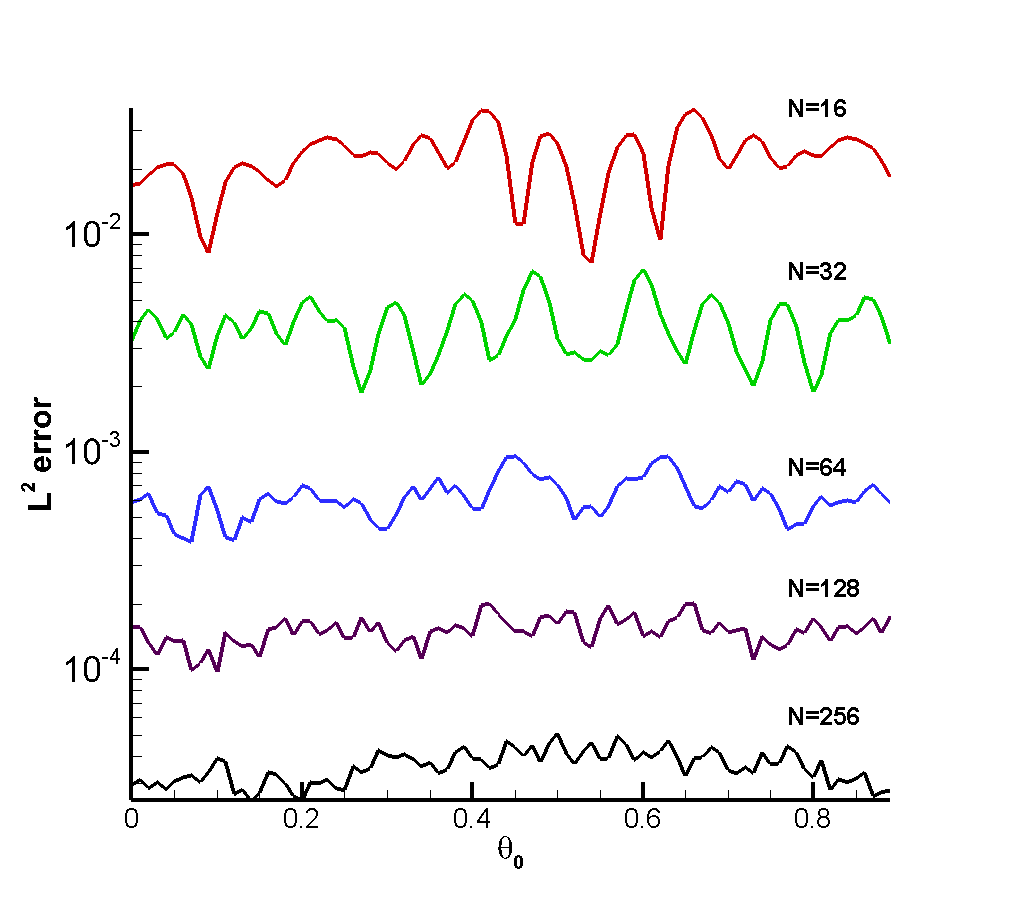}
\quad
\includegraphics[width=0.4\textwidth]{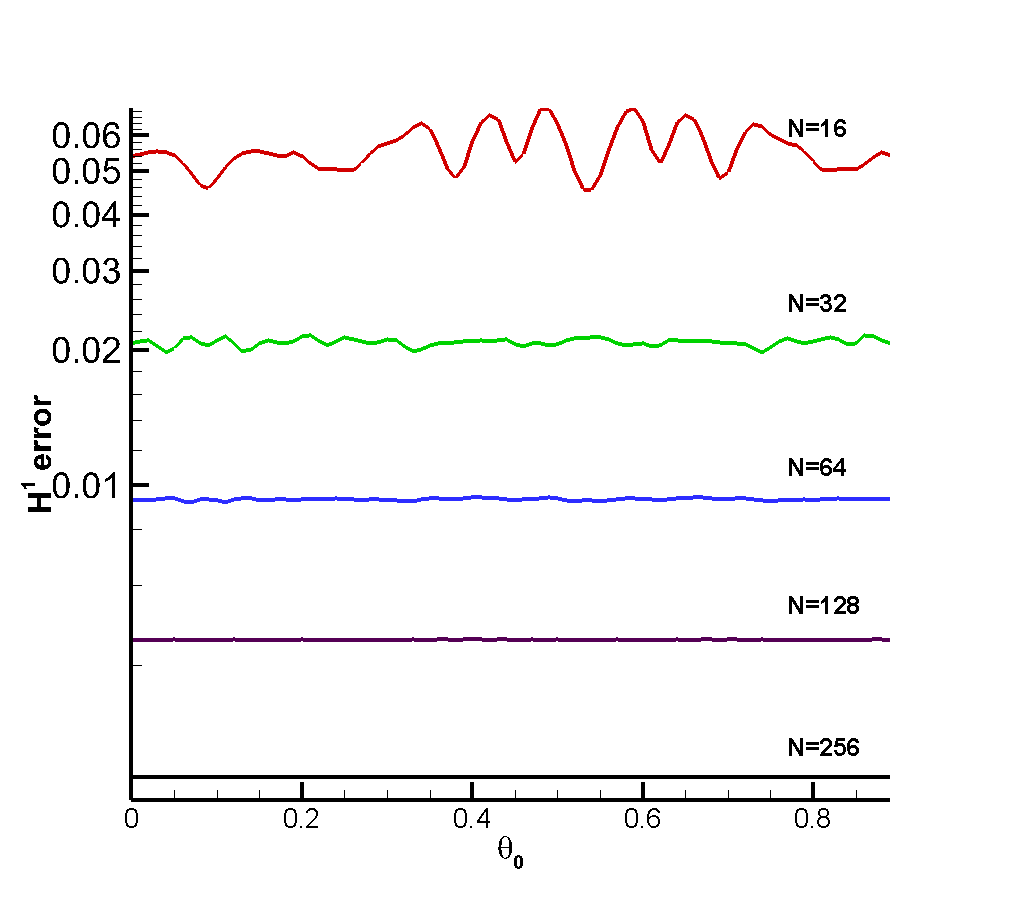}
}
\caption{CutFEM (\ref{CutNeu}) for Poisson-Neumann problem (\ref{Neu:P}) in the domain as in Fig. \ref{Dir:figdom}  rotated by an angle $\theta_0$ around the origin: the relative errors in $L^2(\Omega)$ and $H^1(\Omega)$ norms as 
functions of the rotation angle $\theta_0$.}
\label{Neu:cut}
\end{figure}

In order to explore the robustness of the method with respect to the placement of the physical domain on the computational mesh, we now redo the calculations above, rotating $\Omega$ by a series of angles $\theta_0$ ranging from $0$ to $\frac{2\pi}{7}$, same as in the Dirichlet case. Fig.~\ref{Neu:figres} presents the errors in $L^2(\Omega)$ and $H^1(\Omega)$ norms as functions of $\theta_0$ at different discretization levels. Overall, the errors are of the same order as in the Dirichlet case, but they are now much more sensitive to positioning the domain with respect to the mesh. This is especially true for the $L^2$ error whose variability does not fade out when the meshes are refined. 

\red{
The origin of this unfortunate phenomenon can be partially attributed to possible bad conditioning of the matrix of method (\ref{Neu:Ph})--(\ref{Rob:ah}). Indeed, the scaling of the grad-div stabilization term there is inconsistent with other terms. Loosely speaking, assuming that $y$ scales as $\frac{1}{h}u$,  the scaling of the term with $\gamma_{div}$ with respect to the mesh size is $1/h^2$ in 2D, while the scaling of all the other terms is 1. We have tried to recover the correct scaling of this term in numerical experiments reported in Fig.~\ref{Neu:figres_divh2}. The coefficient $\gamma_{div}$ is taken there proportional to $h^2$ rather than a mesh-independent constant. This scheme is thus not covered by the preceding theory but it gives essentially the same results in practice as the scheme with constant $\gamma_{div}$. In particular, the lack of convergence in the $L^2$ norm on certain geometrical configurations is still observed on the most refined meshes.}

\red{
It is also interesting to compare the results produced by the scheme (\ref{Neu:Ph})--(\ref{Rob:ah}) with those of CutFEM:
Find $u_h \in \tilde{V}_h$ s.t.
\begin{equation}
\label{CutNeu}
  \int_{\Omega} \nabla u_h \cdot \nabla v_h 
	+  {\sigma h \sum_{E \in\mathcal{F}_{\Gamma}} \int_E \left[ \frac{\partial u_h}{\partial n} \right] 
	  \left[ \frac{\partial v_h}{\partial n} \right]} 		
	= \int_{\Omega_h} fv_h + \int_{\Gamma} gv_h \quad 
	\qquad\forall v_h\in \tilde{V}_h
\end{equation}
The results are reported in Fig.~\ref{Neu:cut} taking the stabilization parameter $\sigma=0.01$. The error curves in the $H^1(\Omega)$ norm are practically the same as those for (\ref{Neu:Ph})--(\ref{Rob:ah}). We also observe once again a big variability of the  $L^2(\Omega)$ with respect to the geometry. However, these variations seem to fade out under the mesh refinement, contrary to the scheme (\ref{Neu:Ph})--(\ref{Rob:ah}). 
}

Extensive numerical experiments on the influence of the stabilization parameters $\gamma_{div}$, $\gamma_1$ and $\sigma$ in (\ref{Rob:ah}) have been also conducted. Similar to the Dirichlet case, the accuracy does not seem to deteriorate catastrophically in the limit $\gamma_{div},\sigma\to 0$ and the method is not too much sensitive to the parameters in a wide range. We choose not to go into further details since the behaviour with respect to 3 parameters is difficult to summarize efficiently in figures or tables.

\color{black}
\subsection{A test case in 3D}
In a slight variation to the preceding numerical experiments and to the theoretical analysis of the paper, we present here the results for the diffusion equation in a 3D domain:
\begin{equation}
  - \Div(D\nabla u) = f \text{ in } \Omega, \qquad u = 0 \text{ on } \Gamma
  \label{diff}
\end{equation}
with a non-constant diffusion coefficient $D$.
We take $\Omega\subset\RR^3$ as the ball of radius 1 centred at the  origin. Denoting $r$ the distance to the origin, we choose the diffusion coefficient and the exact solution as
$$
D=\frac{1}{1+r}, \quad u=\cos\left(\frac{\pi}{2}r\right)
$$
and adjust the right-hand side $f$ accordingly.
The method (\ref{Dir:Ph})--(\ref{Dir:ah}) is modified as follows: Find $u_h \in V_h$ (the $P_1$ continuous finite elements on mesh $\Th$) such that
\begin{equation*}
  \int_{\Omega_h} D\nabla u_h \cdot \nabla v_h - \int_{\Gamma_h}
  D\frac{\partial u_h}{\partial n} v_h + \int_{\Gamma} u_h D\frac{\partial v_h}{\partial
  n} + \frac{\gamma}{h}  \int_{\Gamma} u_hv_h + \sigma h \sum_{E \in
  \mathcal{F}_{\Gamma}} \int_E \left[ \frac{\partial u_h}{\partial n} \right]
  \left[ \frac{\partial v_h}{\partial n} \right] 
	= \int_{\Omega_h} fv_h \quad \forall v_h  \in V_h 
\end{equation*}

The numerical experiments have been done in FreeFEM using the HYPRE linear solvers via PETSc. The computational meshes $\Th$ were obtained from the background uniform tetrahedral meshes on a sufficiently large cube by getting rid of the tetrahedra outside $\Omega$, cf. some example in Fig.~\ref{3Dmesh}. The following stabilization parameters were used: $\gamma=1$, $\sigma=0$. A mesh convergence study is reported in Fig.~\ref{3Dconvh} and confirms the optimal convergence rates.

\begin{figure}[htp]
\centerline{
\includegraphics[width=0.4\textwidth]{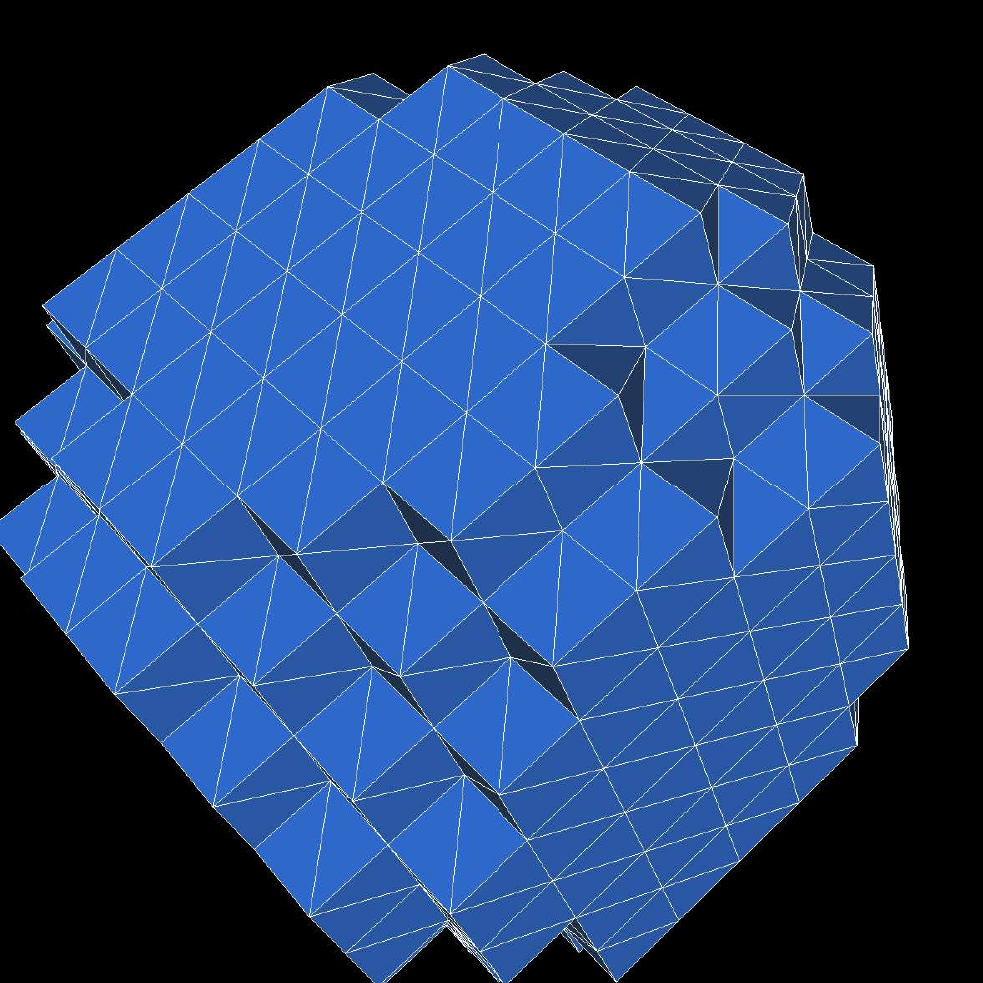}
\quad
\includegraphics[width=0.4\textwidth]{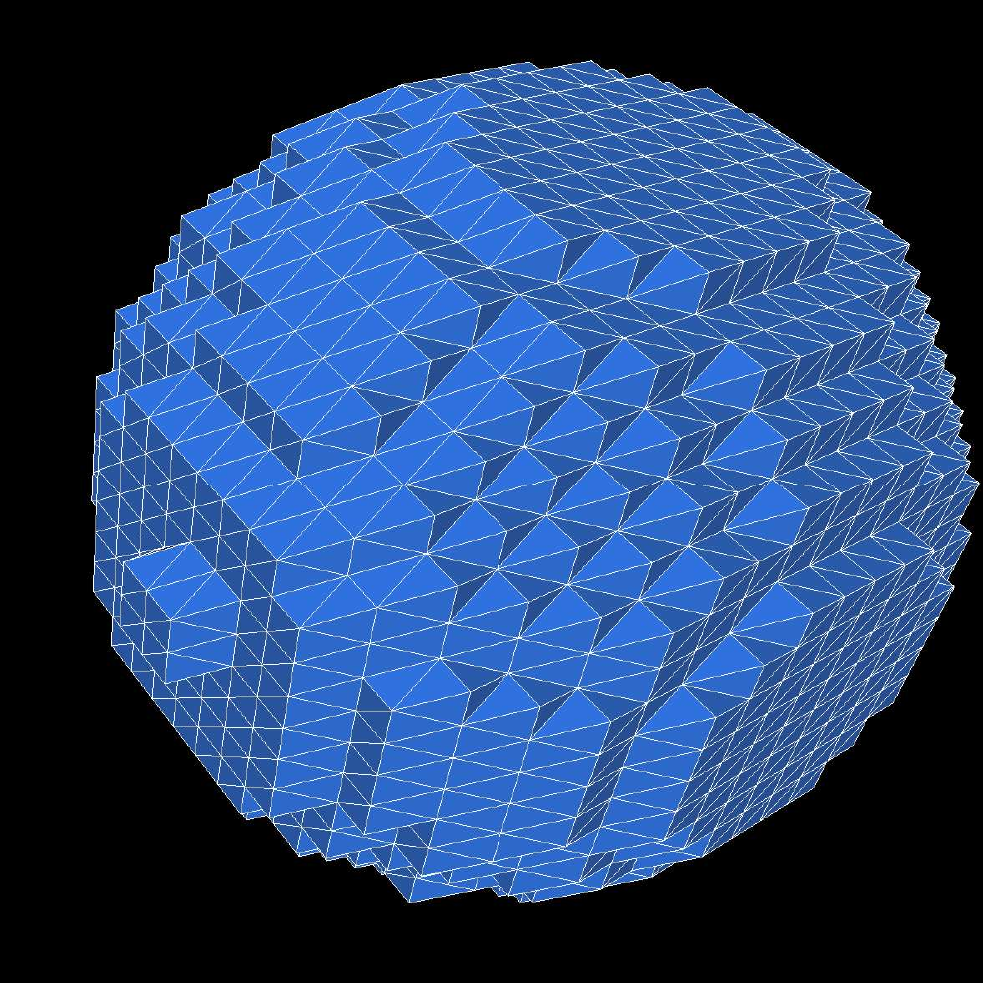}
}
\caption{The computational meshes used in 3D numerical experiments. Left: $\Th$ obtained from the $10\times 10\times 10$ background mesh; right: $\Th$ obtained from the $20\times 20\times 20$ background mesh.}
\label{3Dmesh}
\end{figure}

\begin{figure}[htp]
\centerline{
\includegraphics[width=0.4\textwidth]{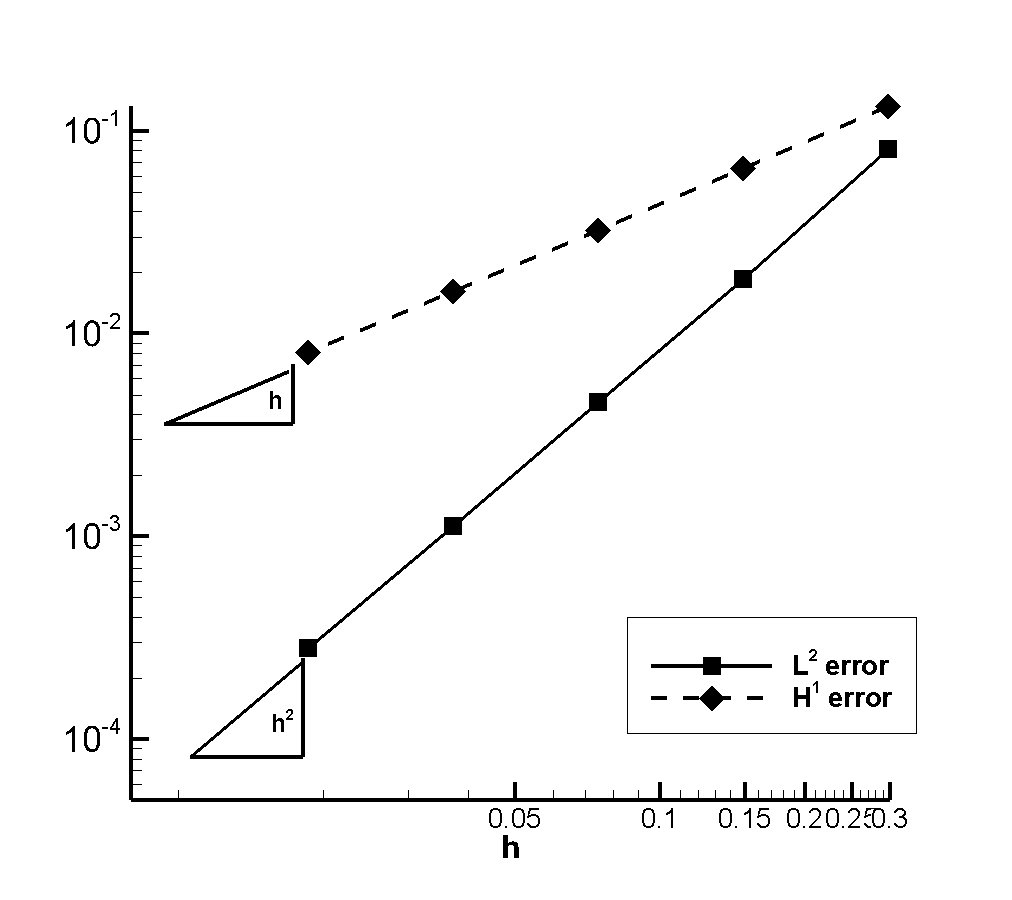}
}
\caption{Diffusion problem (\ref{diff}) on the unit ball and meshes as in Fig.~\ref{3Dmesh}: the relative errors in $L^2(\Omega)$ and $H^1(\Omega)$ norms as functions of $h$ under the mesh refinement.}
\label{3Dconvh}
\end{figure}
\color{black}

\section{Conclusions}
We have presented an optimally convergent method of the fictitious domain/XFEM/CutFEM type avoiding the numerical integration on cut mesh elements. The numerical experiments confirm the optimal convergence  and the robustness of the method (with a possible slight deficiency on the level of $L^2$ errors in the case of Neumann boundary conditions). We have restricted ourselves to simple model problem (Poisson equation, easily generalisable to the diffusion equation) in this first publication, but the methods should be applicable in more realistic  settings as, for example, elasticity problem on a cracked domain (the method would then be similar to XFEM of \cite{moes99} but avoiding the integration on the mesh elements cut by the crack).
Further research could be aimed, apart from extensions to more complex problems, at a finer understanding  of the influence of stabilization parameters.

\section*{Acknowledgements}
I am grateful to Gr\'egoire Allaire for stimulating discussions and suggestions concerning the implementation. The implementation in FreFEM++ would not be possible without advices and help generously provided by  Fr\'ed\'eric Hecht and Pierre Jolivet. \red{I have also greatly appreciated the insightful and thought provoking comments of one of the anonymous reviewers.}

%% The Appendices part is started with the command \appendix;
%% appendix sections are then done as normal sections
%% \appendix

%% \section{}
%% \label{}

\bibliographystyle{elsarticle-num-names} 
\bibliography{cutfem}

\end{document}